\newtheorem{theorem}{Theorem}[section]
\newtheorem{lemma}{Lemma}[section]
\newtheorem{proposition}{Proposition}[section]
\theoremstyle{definition}
\newtheorem{definition}{Definition}
\newtheorem{remark}{Remark}
\newtheorem{example}{Example}
\newcommand{\R}{\mathbf{R}}
\newcommand{\weakstarto}{\stackrel*\rightharpoonup}
\newcommand{\hd}{\mathcal{H}}
\newcommand{\leb}{\mathcal{L}}
\newcommand{\restr}{{\mbox{\LARGE$\llcorner$}}}
\newcommand{\pushforward}[2]{{{#1}_{\#}#2}}
\DeclareMathOperator{\dive}{div}
\DeclareMathOperator*{\argmin}{argmin}
\DeclareMathOperator*{\argmax}{argmax}
\DeclareMathOperator{\arsinh}{arsinh}
\DeclareMathOperator*{\bigtimes}{\raisebox{-.6ex}{\scalebox{2}{$\times$}}}
\newcommand{\E}{\mathcal{E}}
\renewcommand{\d}{{\mathrm d}}
\newcommand{\dist}{{\mathrm{dist}}}
\newcommand{\flux}{{\mathcal{F}}}
\newcommand{\dom}{\overline{\Omega}\times[0,M]}
\newcommand{\G}{\mathcal{G}}
\newcommand{\D}{\mathcal{D}}
\newcommand{\K}{\mathcal{K}}
\newcommand{\A}{\mathcal{A}}
\newcommand{\C}{\mathcal{C}}
\newcommand{\XX}{\mathcal{X}}
\newcommand{\Y}{\mathcal{Y}}
\newcommand{\manifold}{\mathcal{N}}
\newcommand{\M}{\mathcal{M}}
\newcommand{\BV}{\mathrm{BV}}
\newcommand{\N}{\mathcal{N}}
\newcommand{\T}{\mathcal{T}}
\newcommand{\dofnodes}{\N'}
\newcommand{\notinclude}[1]{}
\newlength{\dhatheight}
\begin{document}



\title{An adaptive finite element approach for lifted branched transport problems}
\author{Carolin Dirks, Benedikt Wirth}

\maketitle
\abstract{We consider so-called branched transport and variants thereof in two space dimensions.
In these models one seeks an optimal transportation network for a given mass transportation task.
In two space dimensions, they are closely connected to Mumford--Shah-type image processing problems,
which in turn can be related to certain higher-dimensional convex optimization problems via so-called functional lifting.
We examine the relation between these different models and exploit it to solve the branched transport model numerically via convex optimization.
To this end we develop an efficient numerical treatment based on a specifically designed class of adaptive finite elements.
This method allows the computation of finely resolved optimal transportation networks
despite the high dimensionality of the convex optimization problem and its complicated set of nonlocal constraints.
In particular, by design of the discretization the infinite set of constraints reduces to a finite number of inequalities.
}




\section{Introduction}

During the past two decades a class of models has been developed that can be interpreted as variants of classical optimal transport (more specifically Wasserstein-1 transport).
Given two nonnegative (probability) measures, a material source $\mu_+$ and a material sink $\mu_-$,
one needs to transport the material from $\mu_+$ to $\mu_-$ at minimal cost.
The underlying cost functionals have the feature that the cost per transport distance is not proportional to the amount of transported mass.
Instead, a subadditive cost function penalizes transport of small masses disproportionately stronger
and thus promotes mass aggregation and transport of the accumulated material along an emerging common transport network.
The resulting networks exhibit a complicated branching structure, where the grade of ramification and the network geometry are controlled by the precise form of the cost functional.
Particular instances of this model class include the so-called branched transport \cite{Xia2003,MaSoMo2003}, urban planning \cite{BrBu2005}, and the Steiner tree problem \cite{GiPo1968}
(note that there is a large variety of possible model formulations which in the end turn out to be equivalent, see \cite{BrWi2018} and the references therein).
There exist a variety of interesting applications such as the optimization of communication or public transportation networks \cite{Gi67,BuPrSoSt09}
or the understanding of vascular structures in plants and animals \cite{Xi07,BrSu18,XiCrFa16}, to name just a few.

Typically, the corresponding energy landscape is highly non-convex.
Consequently, the identification and construction of a globally optimal transportation network is a challenging task.
In this work we exploit a connection of the two-dimensional transportation network problem to convex image processing methods
in order to compute globally optimal network geometries numerically.
We already made use of this connection in previous work \cite{BrRoWi2018} to prove lower bounds on the transportation cost and to perform preliminary numerical simulations,
however, since our sole interest were lower bounds, we had neither fully understood the underlying connection nor come up with an efficient, tailored numerical scheme.
In contrast, in the present work our focus is on numerically solving two-dimensional branched transport problems.
For this we will prove the equivalence of the original transportation network problem to a sequence of models leading to a convex image inpainting problem
(the only gap in this sequence of equivalences will be a classical relaxation step for the Mumford--Shah functional,
whose tightness is unto this date not known to the best of our knowledge).
Even though the final problem is convex, it features a high dimensionality and a huge number of constraints which render its solution with standard methods infeasible.
We thus proceed to design a particular adaptive discretization which tremendously decreases the computational effort
and thereby allows computation of highly resolved optimal transportation schemes.

\subsection{Existing numerical methods for branched transport-type problems}

In order to simulate optimal transportation networks, several approaches have been investigated in the literature.
Based on a Eulerian formulation via mass fluxes,
Xia introduced an initial approach for numerically finding an optimal graph between two measures \cite{Xia2003,Xia2015}.
This local optimization technique was extended to a minimization algorithm in \cite{Xia2008},
which in several numerical examples with a single source point and a fixed number of $N$ sinks seems to yield almost optimal networks.
It was shown in \cite{Xia2008} and \cite{Xia2015} that, although not necessarily leading to a global minimizer,
this optimization algorithm provides an approximately optimal transport network and is applicable even in case of a large number of sinks ($N\approx400$).
Two heuristic approaches based on stochastic optimization techniques on graphs were presented in \cite{MaMiSc2012} and \cite{Pi2014}.
As before, these method are capable of providing almost optimal network structures, but cannot guarantee global optimality either.
The limit case of the Steiner tree problem, where the transport cost is independent of the amount of transported mass, was treated more extensively in the literature.
Due to the independence of the transported mass, there exist very efficient algorithms in a planar geometry
providing a globally optimal Steiner tree (see for instance the GeoSteiner method \cite{JuWaWiZa2018} or Melzak's full Steiner tree algorithm \cite{Me1961}).
For more than two space dimensions there exist fewer approaches which are less efficient;
an overview of some methods for the Steiner tree problem in $n$ dimensions is provided in \cite{FaLeMa16},
where the main ideas trace back to \cite{GiPo1968,Sm1992,MaMiXa2000,FoBrWiZa2014}.
For the general transportation network problem, a widely used approach was inspired by elliptic approximations of free-discontinuity problems
in the sense of Modica--Mortola and Ambrosio--Tortorelli via phase fields.
In \cite{OuSa2011,ChMeFe2016,Mo2017,Fe18,FeRoWi2019,Wi19},
corresponding phase field approximations have been presented for the classical branched transport problem, the Steiner tree problem, a variant of the urban planning problem (which is piecewise linear in the amount of transported mass) or more general cost functions,
however, all restricted to two space dimensions.

\subsection{Contributions of our work}

In this work, we build on the approach introduced by \cite{BrRoWi2018}, which consists in a novel reformulation of the optimal transportation network problem as a Mumford--Shah-type image inpainting problem in two dimensions. Roughly speaking, the optimal network is represented by the rotated gradient of a grey-value image of bounded variation. The resulting equivalent energy functional resembles the structure of the well-known Mumford--Shah functional \cite{MuSh1989}, which in turn admits a convex higher-dimensional relaxation by a so-called functional lifting approach \cite{AlBoDa03,PoCrBiCh2009}.

In a little more detail, fix some domain $\Omega\subset\R^2$ and consider a source and sink $\mu_+$ and $\mu_-$ supported on the boundary $\partial\Omega$.
We denote by $\tau(m)$ the cost for transporting mass $m$ along one unit distance.
Describing the transportation network as a vector measure $\flux\in\M(\overline\Omega;\R^2)$,
the associated generalized branched transport cost can be defined as some functional $\E(\flux)$ depending on the choice of $\tau$.
Now any vector measure $\flux\in\A_\flux=\{\flux\in\M(\overline\Omega;\R^2)\,|\,\dive\flux=\mu_+-\mu_-\}$
can be interpreted as the rotated gradient of an image $u\in\A_u$ for some set of admissible images $\A_u\subset\BV(\Omega)$
so that the generalized branched transport cost can be reformulated as the cost
\begin{equation*}
\tilde\E(u)=\int_{S_u\cap\overline\Omega}\tau([u])\,\d\hd^1+\tau'(0)|Du|(\Omega\setminus S_u)
\end{equation*}
of the associated image $u$ (where $S_u$ denotes the discontinuity set and $[u]$ the jump of $u$).
Writing $1_u$ for the characteristic function of the subgraph of $u$, Alberti et al.\ suggested in \cite{AlBoDa03} to rewrite $\tilde\E(u)$ as
\begin{equation*}
\G(1_u)=\sup_{\phi\in\K}\int_{\overline\Omega\times\R}\phi\cdot\,\d D1_u
\end{equation*}
for some particular set $\K$ of three-dimensional vector fields depending on $\tau$.
By convexifying the set of characteristic functions $1_u$ to a set $\C$ of more general functions $v:\Omega\times\R\to[0,1]$
one finally arrives at a convex optimization problem, whose dual can be used to provide a lower bound.
In summary, as proved rigorously in \cite{BrRoWi2018} we have
\begin{multline}\label{eqn:mainInequalities}
\inf_{\flux\in\A_\flux}\E(\flux)
\geq\inf_{u\in\A_u}\tilde\E(u)
\geq\inf_{u\in\A_u}\G(1_u)
\geq\inf_{v\in\C}\G(v)\\
\geq\sup_{\phi\in\K}\int_{\partial\Omega\times\R}1_{u(\mu_+,\mu_-)}\phi\cdot n \ \d\hd^2 - \int_{\Omega\times\R}\max\{0,\dive\phi\}\,\d x\,\d s,
\end{multline}
where $1_{u(\mu_+,\mu_-)}$ denotes a particular binary function defined on $\partial\Omega\times\R$.
The left-hand side of the above is the original generalized branched transport problem.
In \cite{BrRoWi2018} we used the right-hand side to prove lower bounds for $\E(\flux)$,
and we furthermore discretized this three-dimensional convex optimization problem via a simple finite difference scheme
and presented several simulation results for different scenarios.

From the viewpoint of numerics for branched transport problems, the results of \cite{BrRoWi2018} are unsatisfactory for two reasons:
(i) The final convex optimization problem was only shown to be a lower bound, whose solutions might actually differ from the minima of the original problem.
(ii) The employed numerical methods suffered from excessive memory and computation time requirements, rendering complex network optimizations infeasible.
The contribution of the present work is to remedy these shortcomings:
\begin{itemize}
\item
We prove equality for the whole above sequence of inequalities except for the third, $\inf_{u\in\A_u}\G(1_u)\geq\inf_{v\in\C}\G(v)$,
which we only conjecture to be an equality
(this is a particular instance of a long-standing, yet unsolved problem for which we can only provide some discussion and numerical evidence).
Note that while the equality $\inf_{u\in\A_u}\tilde\E(u)=\inf_{u\in\A_u}\G(1_u)$ might be considered known in the calculus of variations community
(it is for instance stated as Remark\,3.3 in the arXiv version of \cite{AlBoDa03}),
a rigorous proof was not available in the literature.
\item
For a set of simple example cases we provide fluxes $\flux\in\A_\flux$ and vector fields $\phi\in\K$
for which left- and right-hand side in the above inequality coincide (such vector fields are known as calibrations).
This serves the same two purposes as the original use of calibrations in \cite{AlBoDa03} for the classical Mumford--Shah functional:
It shows $\inf_{u\in\A_u}\G(1_u)=\inf_{v\in\C}\G(v)$ in various relevant cases,
and it provides explicit optimality results for particular settings of interest.
\item
We develop a non-standard finite element scheme,
allowing an efficient treatment of the lifted branched transportation network problem and providing locally high resolution results.
The main difficulties here lie in the high dimensionality due to the lifted dimension
as well as in the suitable handling of the infinite number of nonlocal inequality constraints defining the set $\K$.
The former difficulty is approached by the use of grid adaptivity, the latter by a particular design of the finite element discretization.
\end{itemize}

The above-mentioned equalities are presented and proved in \crefrange{thm:fluxImage}{thm:strongDuality} within \cref{sec:liftingTheory},
which also contains the calibration examples.
The tailored discretization and corresponding numerical algorithm are presented in \cref{sec:numerics} together with numerical results.


\subsection{Preliminaries}

Let us briefly fix some notation.
We denote by $\leb^n$ the $n$-dimensional Lebesgue measure, by $\hd^k$ the $k$-dimensional Hausdorff measure, and by $\delta_x$ the Dirac measure in a point $x\in\R^n$.
The space of $\R^N$-valued Radon measures on $\overline\Omega$ for $\Omega\subset\R^n$ some open bounded domain is denoted by $\M(\overline\Omega;\R^N)$.
For $N=1$, we write $\M(\overline\Omega)$ and define $\M_+(\overline\Omega)$ as the set of non-negative finite Radon measures on $\overline\Omega$.
For a measure $\flux\in\M(\overline\Omega;\R^N)$, the corresponding total variation measure and the total variation norm are denoted by $|\flux|$ and $\|\flux\|_\M=|\flux|(\overline\Omega)$, respectively.
The Radon measures can be viewed as the dual to the space of continuous functions, thus there is a corresponding notion of weak-* convergence, indicated by $\weakstarto$.
For a measure space $(X,\A,\mu)$ and some $Y\subset X$ with $Y\in\A$, the restriction of the measure $\mu$ onto $Y$ is written as $\mu\restr Y(A)=\mu(A\cap Y)$ for all $A\in\A$.
The Banach space of functions of bounded variation on $\Omega$, that is,
functions $u$ in the Lebesgue space $L^1(\Omega)$ whose distributional derivative is a vector-valued Radon measure,
is denoted $\BV(\Omega)$ with norm $\|u\|_{\BV} = \|u\|_{L^1} + \|Du\|_\M$.
The Banach space of continuous $\R^N$-valued functions on $\overline\Omega$ is denoted by $C^0(\overline\Omega;\R^N)$,
the space of compactly supported smooth $\R^N$-valued functions on $\Omega$ by $C_0^\infty(\Omega;\R^N)$.

For a convex subset $C$ of a vector space $X$ we write the orthogonal projection of $x\in X$ onto $C$ as $\pi_C(x) = \argmin_{y\in C}|x-y|$. The convex analysis indicator function of $C$ is denoted by $\iota_C(x)=0$ if $x\in C$ and $\iota_C(x)=\infty$ else.


\section{Functional lifting of the generalized branched transport cost}\label{sec:liftingTheory}

Below we briefly recapitulate the Eulerian formulation of the generalized branched transport problem in \cref{sec:branchedTransport},
after which we introduce the reformulation as Mumford--Shah image inpainting problem and its convexification via functional lifting in \cref{sec:FuncLifting}.
We will prove equivalence of the different resulting formulations except for one relaxation step, whose implications can only be discussed.
We then use the convex optimization problem to show optimality of a few particular network configurations in \cref{sec:calibrations}.

\subsection{Generalized branched transport}\label{sec:branchedTransport}

In generalized branched transport models, the cost for transporting a lump of mass $m$ along one unit distance is described by a transportation cost $\tau(m)$.
This transportation cost is taken to be subadditive, which encodes that transporting several lumps of mass together is cheaper than transporting each separately.
(Two further natural requirements from an application viewpoint are monotonicity and lower semi-continuity.)
For the purpose of this article we will restrict ourselves to the class of concave transportation costs (note that any concave function $\tau$ with $\tau(0)=0$ is subadditive),
which encompasses all particular models studied in the literature so far.

\begin{definition}[Transportation cost]\label{def:CostFunction}
A \emph{transportation cost} is a non-decreasing, concave, lower semi-continuous function $\tau:[0,\infty)\rightarrow[0,\infty)$ with $\tau(0)=0$. 
\end{definition}

\begin{example}[Branched transport, urban planning, and Steiner tree]\label{ex:BTandUP}
Three particular examples of transportation costs are given by
\begin{equation*}
\tau^{\mathrm{bt}}(m)=m^\alpha,\quad
\tau^{\mathrm{up}}(m)=\min\{am,m+b\},\quad
\tau^{\mathrm{st}}(m)=1\text{ if }m>0,\tau^{\mathrm{st}}(0)=0
\end{equation*}
for parameters $\alpha\in(0,1)$, $a>1$, $b>0$.
The original \emph{branched transport} model in \cite{Xia2003} and \cite{MaSoMo2003} uses $\tau^{\mathrm{bt}}$, and most analysis of transportation networks has been done for this particular case.
The \emph{urban planning} model, introduced in \cite{BrBu2005} and recast into the current framework in \cite{BrWi16}, is obtained for $\tau^{\mathrm{up}}$.
Here the material sources and sinks represent the homes and workplaces of commuters, and one optimizes the public transport network
($a$ has the interpretation of travel costs by other means than public transport, while $b$ represents network maintenance costs).
Finally, the \emph{Steiner tree} problem of connecting $N$ points by a graph of minimal length can be reformulated as generalized branched transport
by taking a single point as source of mass $N-1$ and the remaining $N-1$ points as sinks of mass $1$, using the transportation cost $\tau^{\mathrm{st}}$.
\end{example}

In the simplest formulation, the generalized branched transport problem is first introduced for simple transportation networks,
so-called discrete transport paths or discrete mass fluxes, which can be identified with graphs (see \cite{Xia2003,BrWi2018}).

\begin{definition}[Discrete mass flux]
Let $\mu_+=\sum_{i=1}^ka_i\delta_{x_i}$, $\mu_-=\sum_{j=1}^lb_i\delta_{y_j}$ be two measures with 
$x_i,y_j\in\R^n$, $a_i,b_j>0$. Let $G$ be a weighted directed graph in $\R^n$ with vertices $V(G)$, edges $E(G)$, and weight function $w:E(G)\rightarrow[0,\infty)$. For an edge $e\in E(G)$, we denote by $e^+,e^-$ its initial and final vertex and by $\vec{e}=\frac{e^--e^+}{|e^--e^+|}\in S^{n-1}$ its direction. Then the vector measure 
\begin{equation*}
\flux_G = \sum_{e\in E(G)} w(e)(\hd^1\restr e)\vec{e}
\end{equation*} 
is called a \emph{discrete mass flux}. It is a discrete mass flux \emph{between $\mu_+$ and $\mu_-$}, if $\dive\flux_G=\mu_+-\mu_-$ in the distributional sense.  
\end{definition} 

\begin{definition}[Discrete cost functional]
Let $\flux_G$ be a discrete mass flux \notinclude{between $\mu_+$ and $\mu_-$ }corresponding to a graph $G$. The \emph{discrete generalized branched transport cost functional} is given by 
\begin{align*}
\E(\flux_G) = \sum_{e\in E(G)}\tau(w(e)) \hd^1(e).
\end{align*}
\end{definition} 

In the above discrete setting, the weight function $w$ encodes the amount of mass flowing through an edge, while the distributional divergence constraint ensures that no mass is created or lost outside the source $\mu_+$ and sink $\mu_-$ of the mass flux.
Obviously, there can only be discrete mass fluxes between sources and sinks of equal mass.
For general mass fluxes, described as vector-valued measures, the cost is defined via weak-* relaxation. 

\begin{definition}[Continuous mass flux]
Let $\mu_+,\mu_-\in\M_+(\R^n)$. 
A vector measure $\flux\in\M(\R^n;\R^n)$ is a \emph{(continuous) mass flux between $\mu_+$ and $\mu_-$}, if $\dive\flux=\mu_+-\mu_-$ in the distributional sense. 
\end{definition} 

\begin{definition}[Continuous cost functional]\label{def:ContinuousCostFunctionals}
Let $\flux$ be a continuous mass flux\notinclude{ between $\mu_+$ and $\mu_-$}. The \emph{continuous generalized branched transport cost functional} is given by 
\begin{align*}
\E(\flux) = \inf\Big\{ \liminf_{k\rightarrow\infty} \ \E(\flux_{G_k}) \,\Big|\, (\flux_{G_k},\dive\flux_{G_k})\weakstarto(\flux,\dive\flux)\Big\}.
\end{align*}
\end{definition} 

Existence of minimizing mass fluxes between arbitrary prescribed sources $\mu_+$ and sinks $\mu_-$  has been shown in \cite{Xia2003,BrWi2018}
under growth conditions on the transportation cost $\tau$ near zero.

\subsection{Reformulation as an image inpainting problem in 2D and convexification}\label{sec:FuncLifting}

In \cite{BrRoWi2018} we introduced a reformulation of the branched transportation energy as an image inpainting problem in two space dimensions, leading to a convexification via a functional lifting approach and to the sequence \eqref{eqn:mainInequalities} of inequalities. Here we recall the key steps of this analysis, complement it with the derivation of the opposite inequalities, and finally derive the lifted convex optimization problem which will later form the basis of our numerical simulations.

From now on, let $\Omega\subset\R^2$ be open and convex (the following could easily be generalized to Lipschitz domains which would just lead to a more technical exposition),
and let $\mu_+,\mu_-\in\M_+(\partial\Omega)$ with equal mass $\|\mu_+\|_\M=\|\mu_-\|_\M$ denote a material source and sink supported on the boundary $\partial\Omega$.
We furthermore abbreviate $V=B_1(\Omega)\subset\R^2$ to be the open $1$-neighbourhood of $\Omega$,
whose sole purpose is to allow defining boundary values for images $u$ on $\Omega$ by fixing $u$ on $V\setminus\Omega$
(which is notationally easier than working with traces of $\BV$ functions).

\begin{remark}[Existence of optimal mass fluxes]
In the two-dimensional setting with $\mu_+$ and $\mu_-$ concentrated on the boundary $\partial\Omega$
one always has existence of optimal (that is, $\E$-minimizing) mass fluxes between $\mu_+$ and $\mu_-$, independent of the choice of $\tau$.
Indeed, there exists a mass flux of finite cost
(for instance a mass flux concentrated on $\partial\Omega$ which moves the mass round counterclockwise
and whose cost can be bounded from above by $\tau(\|\mu_+\|_\M)\hd^1(\partial\Omega)$)
so that existence of minimizers follows from \cite[Thm.\,2.10]{BrWi2018}.
\end{remark}

For an image $u\in \BV(V)$, one can define a mass flux $\flux_u\in\M(\overline\Omega;\R^2)$ as the rotated gradient of $u$,
\begin{equation*}
\flux_u = Du^\perp\restr\overline\Omega = (\nabla u^\perp \leb^2\restr V + [u]\nu_u^\perp\hd^1\restr S_u + D_cu^\perp)\restr\overline\Omega.
\end{equation*}
Here, $\nabla u$ denotes the approximate gradient of the image $u$, $S_u$ the approximate discontinuity set, $\nu_u$ the unit normal on $S_u$, $[u]=u^+-u^-$ the jump in function value across $S_u$ in direction $\nu_u$, $D_cu$ a Cantor part (see for instance \cite[\S\,3.9]{AmFuPa2000}), and $\perp$ counterclockwise rotation by $\frac\pi2$. Since $Du$ as a gradient is curl-free, $\flux_u$ is divergence-free (in the distributional sense) in $\Omega$.
It is now no surprise that fluxes between $\mu_+$ and $\mu_-$ correspond to images with particular boundary conditions.
To make this correspondence explicit, let $\gamma:[0,\hd^1(\partial\Omega))\rightarrow \partial\Omega$ be a counterclockwise parameterization of $\partial\Omega$ by arclength,
where without loss of generality we may assume $\gamma(0)=0\in\partial\Omega$, and abbreviate $\partial\Omega_t=\gamma([0,t))$.

\begin{definition}[Admissible fluxes and images]
Given $\mu_+,\mu_-\in\M_+(\partial\Omega)$ with equal mass, we define
\begin{equation*}
u(\mu_+,\mu_-):V\setminus\Omega\rightarrow\R, \ x\mapsto (\mu_+-\mu_-)\left(\partial\Omega_{\gamma^{-1}(\pi_{\partial\Omega}(x))}\right)
\end{equation*}
and the sets of \emph{admissible fluxes and images} as 
\begin{align*}
\A_\flux &= \{ \flux\in\M(\overline{\Omega};\R^2) \,|\, \flux\text{ is mass flux between }\mu_+\text{ and }\mu_- \}, \\
\A_u &=  \{ u\in \BV(V) \,|\, u=u(\mu_+,\mu_-) \text{ on } V\setminus\overline{\Omega} \}.
\end{align*} 
\end{definition}

\begin{figure}
\subfloat[]{%
\setlength\unitlength\textwidth
\begin{picture}(.6,.3)
\put(.001,.01){\color{black!70}\rule{.06\unitlength}{.3\unitlength}}
\put(.061,.01){\color{black!50}\rule{.06\unitlength}{.3\unitlength}}
\put(.121,.01){\color{black!30}\rule{.06\unitlength}{.032\unitlength}}
\put(.121,.278){\color{black!30}\rule{.06\unitlength}{.032\unitlength}}
\put(.181,.01){\color{black!10}\rule{.06\unitlength}{.032\unitlength}}
\put(.181,.278){\color{black!10}\rule{.06\unitlength}{.032\unitlength}}
\put(0,0){\includegraphics[width=0.6\unitlength]{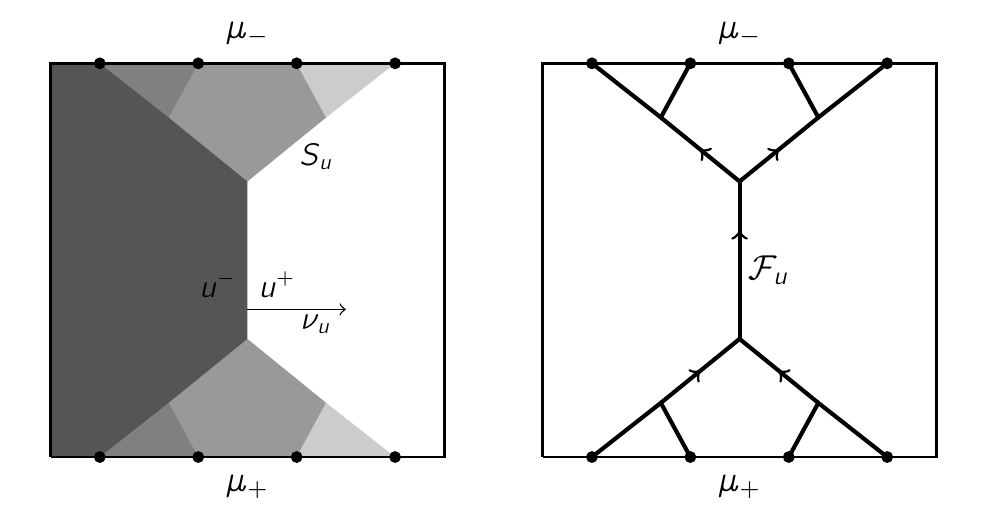}}
\put(.24,.24){\small$\Omega$}
\put(.54,.24){\small$\Omega$}
\end{picture}}
\subfloat[]{\includegraphics[width=0.39\textwidth]{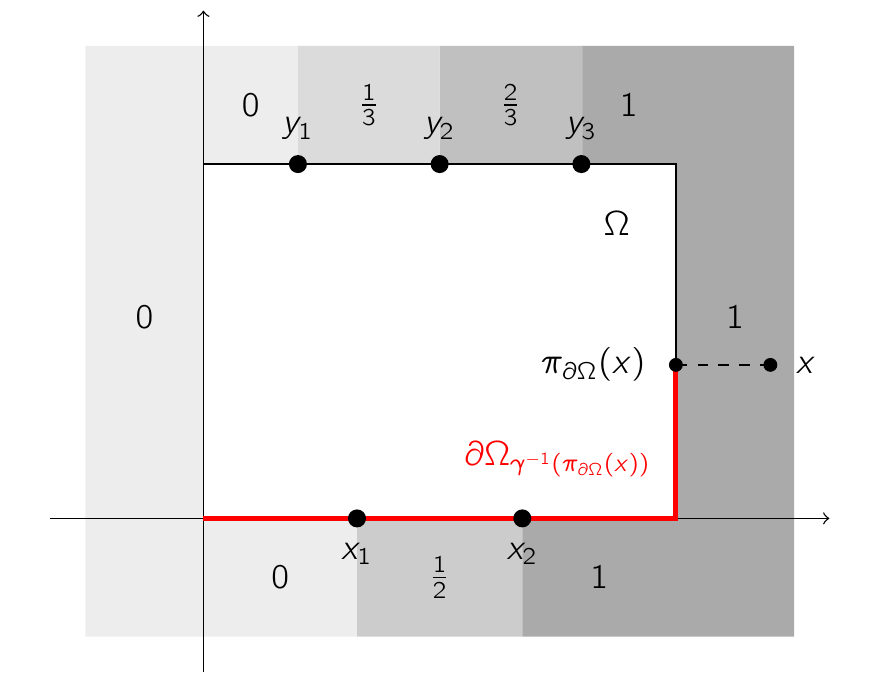}}
\caption{(a) A grey value image $u$ and its corresponding mass flux $\flux_u$. (b) Sketch of $u\left(\mu_+,\mu_-\right)$ for $\mu_+=\frac{1}{2}(\delta_{x_1}+\delta_{x_2})$ and $\mu_-=\frac{1}{3}(\delta_{y_1}+\delta_{y_2}+\delta_{y_3})$ so that $u\left(\mu_+,\mu_-\right)$ takes values in $\{0,\frac{1}{3},\frac{1}{2},\frac{2}{3},1\}$.}
\label{fig:ComparisonFluxImage}
\end{figure}

By \cite[Lem.\,3.1.3]{BrRoWi2018} the mapping $u\mapsto\flux_u\restr\overline{\Omega}$ from $\A_u$ to $\A_\flux$ is one-to-one
so that we may also introduce the image $u_\flux\in\A_u$ corresponding to the mass flux $\flux\in\A_\flux$.
The relation between images and fluxes is illustrated in \cref{fig:ComparisonFluxImage}.
The following cost functional now expresses the generalized branched transport cost as a cost of images.

\begin{definition}[Image-based cost functional]\label{def:ImageBasedCostFunctionals}
For an admissible image $u\in\A_u$, the \emph{generalized branched transport cost of images} is defined as 
\begin{align*}
\tilde{\E}(u) = \int_{S_u\cap\overline{\Omega}} \tau(|[u]|) \d\hd^1 + \tau'(0)|Du|(\overline{\Omega}\setminus S_u),
\end{align*}
where $\tau'(0)\in(0,\infty]$ denotes the right derivative of $\tau$ in $0$.
\end{definition}

In \cite[Thm.\,3.2.2 \& Lem.\,3.2.5]{BrRoWi2018} we proved the relation $\E(\flux) \geq \tilde{\E}(u_\flux)$
by showing that both functionals coincide for discrete mass fluxes and the corresponding images
and by then exploiting that $\tilde\E$ is lower semi-continuous while $\E$ is the relaxation of its restriction to discrete mass fluxes
(that is, the largest lower semi-continuous function which coincides with $\tilde\E$ on discrete mass fluxes).
The opposite inequality can be obtained by showing that $\tilde\E$ is a relaxation as well,
an issue which was considered in \cite{BrWi2018,MaWi19}.

\begin{theorem}[Equality of flux-based and image-based cost]\label{thm:fluxImage}
Let $\mu_+,\mu_-\in\M_+(\partial\Omega)$ with equal mass. For a mass flux $\flux\in\A_\flux$ and the corresponding image $u_\flux\in\A_u$ we have $\E(\flux) = \tilde{\E}(u_\flux)$. 
\end{theorem}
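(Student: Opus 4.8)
Since the inequality $\E(\flux)\ge\tilde\E(u_\flux)$ has already been established in \cite{BrRoWi2018} (as recalled just above the statement), the entire content of the theorem lies in the reverse estimate $\E(\flux)\le\tilde\E(u_\flux)$, and I may assume $\tilde\E(u_\flux)<\infty$. The guiding principle is that $\E$ is, by \cref{def:ContinuousCostFunctionals}, nothing but the lower semicontinuous relaxation of the discrete cost with respect to joint weak-* convergence of $(\flux_{G_k},\dive\flux_{G_k})$, while on discrete mass fluxes the discrete cost coincides with $\tilde\E$ under the bijection $G\mapsto\flux_G\mapsto u_{\flux_G}$. Thus the reverse inequality is precisely the statement that $\tilde\E$ does not exceed this relaxation, i.e.\ that every admissible flux admits a \emph{recovery sequence} of discrete mass fluxes whose cost converges to $\tilde\E(u_\flux)$. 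I would therefore construct, for a fixed $\flux\in\A_\flux$ with corresponding image $u=u_\flux$, a sequence of graphs $G_k$ with $(\flux_{G_k},\dive\flux_{G_k})\weakstarto(\flux,\dive\flux)$ and $\limsup_k\E(\flux_{G_k})\le\tilde\E(u)$; taking the infimum in \cref{def:ContinuousCostFunctionals} then yields the claim, since $\E(\flux_{G_k})=\tilde\E(u_{\flux_{G_k}})$ on discrete fluxes.

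The recovery sequence is built by treating separately the three parts of $Du$ entering $\flux_u$, namely the jump part $[u]\nu_u^\perp\hd^1\restr S_u$, the absolutely continuous part $\nabla u^\perp\leb^2$, and the Cantor part $D_cu^\perp$. For the jump part, which is concentrated on the rectifiable set $S_u$ and carries the macroscopic mass, I would approximate $S_u$ by polyhedral curves and quantize the generally varying jump height $[u]$ into finitely many levels; continuity of $\tau$ together with the rectifiability of $S_u$ guarantees that the associated discrete cost converges to $\int_{S_u\cap\overline\Omega}\tau(|[u]|)\,\d\hd^1$. The delicate contribution is the diffuse part on $\overline\Omega\setminus S_u$, which in $\tilde\E$ is charged only with the linear rate $\tau'(0)$: here I would discretize the diffuse flux into a large number of \emph{thin filaments}, each carrying a small mass $m_k\to0$, so that the scaling $\tau(m_k)/m_k\to\tau'(0)$ (valid because $\tau$ is concave with $\tau(0)=0$) forces the total filament cost to converge from above to $\tau'(0)\,|Du|(\overline\Omega\setminus S_u)$. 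Throughout, the endpoints of filaments and polyhedral curves on $\partial\Omega$ must be chosen so that the discrete fluxes are mass fluxes between discrete approximations $\mu_\pm^k\weakstarto\mu_\pm$, which secures the required weak-* convergence of both $\flux_{G_k}$ and its divergence.

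I expect the diffuse part to be the main obstacle. One must simultaneously realize the precise density of the diffuse flux through the geometry and mass distribution of the filaments, obtain the sharp constant $\tau'(0)$ in the limit rather than a strictly larger value, and preserve the divergence constraint, since introducing many low-mass fibers risks creating spurious interior sources or sinks. The case $\tau'(0)=\infty$ needs separate care, as then $\tilde\E(u)<\infty$ forces $|Du|(\overline\Omega\setminus S_u)=0$ and only the jump construction survives. Rather than carrying out this construction from scratch, I would invoke the approximation and relaxation results for generalized branched transport energies developed in \cite{BrWi2018,MaWi19}, which provide exactly such recovery sequences and thereby certify that $\tilde\E$ equals the relaxation defining $\E$; the proof then reduces to checking that the hypotheses on $\tau$ from \cref{def:CostFunction} and the two-dimensional boundary-concentrated setting fit into their framework.
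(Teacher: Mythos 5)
Your proposal is correct, and in its final step it rests on the same foundation as the paper --- the relaxation theory for generalized branched transport in \cite{BrWi2018,MaWi19} --- but the route is structured differently. You split the claim into the known inequality $\E(\flux)\geq\tilde\E(u_\flux)$ plus a recovery-sequence construction for the converse (polyhedral approximation and jump quantization for $S_u$, thin filaments of vanishing mass realizing the rate $\tau'(0)$ for the diffuse part). The paper never splits into two inequalities and never touches recovery sequences: it writes $\flux_u=\theta(\hd^1\restr S)+\flux^d$ with $S=S_u\cap\overline\Omega$, $\theta=[u]\nu_u^\perp$ and $\flux^d$ the absolutely continuous plus Cantor parts, checks via \cite[Thm.~3.78, Lem.~3.76]{AmFuPa2000} that $S$ is countably $\hd^1$-rectifiable and that $\flux^d$ is diffuse (singular with respect to $\hd^1\restr R$ for every countably $1$-rectifiable $R$), and then quotes the representation formula \cite[Prop.~2.32]{BrWi2018}, namely $\E(\flux)=\int_S\tau(|\theta|)\,\d\hd^1+\tau'(0)|\flux^d|(\overline{\Omega})$, which is literally $\tilde\E(u_\flux)$. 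Everything you anticipate as delicate --- realizing the sharp constant $\tau'(0)$, preserving the divergence constraint, the case $\tau'(0)=\infty$ --- is packaged inside that one cited proposition, so the paper's proof reduces to verifying its structural hypotheses, whereas your route re-derives the mechanism behind it: more transparent, but it re-opens exactly the diffuse-part difficulties you flag as the main obstacle. One small inaccuracy in your sketch: since $\tau$ is concave with $\tau(0)=0$, the unit cost $\tau(m)/m$ is non-increasing in $m$ and increases to $\tau'(0)$ as $m\searrow0$, so the filament cost rate approaches $\tau'(0)$ from below (any excess over $\tau'(0)|Du|(\overline\Omega\setminus S_u)$ can only come from geometric overhead), not from above; this does not affect the validity of the $\limsup$ inequality you need.
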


\begin{proof}
Since the relation between $\A_u$ and $\A_\flux$ is one-to-one it suffices to show $\E(\flux_u) = \tilde{\E}(u)$ for any $u\in\A_u$.
Now note that $\flux_u=\theta(\hd^1\restr S)+\flux^d$ for $S=S_u\cap\overline\Omega$, $\theta=[u]\nu_u^\perp$, and $\flux^d=(\nabla u^\perp \leb^2\restr V + D_cu^\perp)\restr\overline\Omega$.
By \cite[Thm.\,3.78]{AmFuPa2000}, $\Sigma$ is countably $\hd^1$-rectifiable,
and by \cite[Lem.\,3.76]{AmFuPa2000} $\flux^d$ is diffuse, that is, singular with respect to $\hd^1\restr R$ for any countably $1$-rectifiable $R\subset\overline\Omega$.
Thus, by \cite[Prop.\,2.32]{BrWi2018} we have
\begin{equation*}
\E(\flux) = \int_S \tau(|\theta|) \, \d\hd^1 + \tau'(0)|\flux^d|(\overline{\Omega}),
\end{equation*}
however, this equals exactly $\tilde\E(u)$.
\end{proof} 

It turns out that $\tilde\E$ can be expressed as an energy of a surface in $\R^3$, which will lead to a convex optimiziation problem.
This approach has been introduced in \cite{AlBoDa03} to prove optimality of special solutions to the Mumford--Shah problem (and related ones)
by exhibiting a lower bound on the surface energy,
and it was subsequently exploited in \cite{PoCrBiCh2009,PoCrBiCh2010} to numerically compute global minimizers of the Mumford--Shah functional.
The setting in \cite{AlBoDa03,PoCrBiCh2009,PoCrBiCh2010} is slightly more general than what we need here.
The authors consider a generalized Mumford--Shah functional
\begin{equation*}
J(u) = \int_V g(x,u(x),\nabla u(x)) \,\d x + \int_{S_u} h(x,u^+,u^-,\nu_u)\,\d\hd^1(x),
\end{equation*} 
where $g$ is a normal Caratheodory function convex in its third argument
and $h$ is one-homogeneous and convex in its last argument and subadditive in $(u^+,u^-)$ (see \cite[\S\,5.2-5.3]{AmFuPa2000} for details on the requirements).
In \cite{AlBoDa03,PoCrBiCh2010} it is shown that $J(u)$ can be estimated from below as follows. Let
\begin{equation*} 
1_u:V\times\R\to\{0,1\}, \quad 1_u(x,s)=\begin{cases} 1 & \text{ if }u(x)>s, \\ 0 & \text{ otherwise} \end{cases}
\end{equation*}
denote the characteristic function of the subgraph of the image $u\in\BV(V)$ and introduce the convex set
\begin{multline*}
\K = \Big\{ \phi=(\phi^x,\phi^s)\in C_0^\infty(V\times\R;\R^2\times\R) \,\Big|\, \phi^s(x,s)\geq g^*(x,s,\phi^x(x,s)) \ \forall \ (x,s)\in V\times\R, \\\textstyle
\left|\int_{s_1}^{s_2} \phi^x(x,s)\d s\right| \leq h(x,s_1,s_2,\nu) \ \forall \ x\in V, s_1<s_2, \nu\in S^1 \Big\}
\end{multline*}
of three-dimensional vector fields, where $g^*$ denotes the Legendre--Fenchel conjugate of $g$ with respect to its last argument.
Then the generalized Mumford--Shah functional can be estimated via
\begin{equation*}
J(u) \geq \underset{\phi\in\K}{\sup} \int_{V\times\R}\phi\cdot \d D1_u,
\end{equation*}
where the right-hand integral can be interpreted as an integral over the complete graph of $u$ and thus as a surface functional.
Even equality is expected, but has not been rigorously proved.
The above can be specialized to our setting by picking $g(x,u,p)=\tau'(0)|p|$ and $\psi(x,u^+,u^+,\nu)=\tau(|u^+-u^-|)$.

\begin{definition}[Surface-based cost functional]\label{def:surfaceCost}
Let $\mu_+,\mu_-\in\M_+(\partial\Omega)$ with equal mass.
We set
\begin{multline*}
\K = \Big\{ \phi=(\phi^x,\phi^s)\in C_0^\infty(V\times\R;\R^2\times\R) \,\Big|\, |\phi^x(x,s)|\leq\tau'(0),\,\phi^s(x,s)\geq0 \ \forall \ (x,s)\in V\times\R, \\
\textstyle\qquad\hfill\left|\int_{s_1}^{s_2} \phi^x(x,s)\d s\right| \leq \tau(s_2-s_1) \ \forall \ x\in V, s_1<s_2 \Big\}.
\end{multline*}
For an admissible image $u\in\A_u$ the \emph{generalized branched transport cost of surfaces} is defined as
\begin{equation*}
\G(1_u) = \sup_{\phi\in\K}\int_{\overline\Omega\times\R}\phi\cdot\d D1_u.
\end{equation*}
\end{definition}

In \cite{AlBoDa03,BrRoWi2018} it is shown that $\tilde\E(u)\geq\G(1_u)$; we now show equality.

\begin{theorem}[Equality of image-based and surface-based cost]\label{thm:equivalenceImageSurface}
Let $\mu_+,\mu_-\in\M_+(\partial\Omega)$ with equal mass and assume without loss of generality that $u(\mu_+,\mu_-)$ takes minimum value $0$ and maximum value $M\leq\|\mu_+\|_\M$.
For an image $u\in\A_u$ we have $\tilde{\E}(u)=\G(1_u)$. Moreover,
\begin{equation*}
\min_{u\in\A_u}\G(1_u)
=\min_{u\in\A_u\cap\BV(V;[0,M])}\G(1_u)
=\min_{u\in\A_u\cap\BV(V;[0,M])}\sup_{\phi\in\tilde\K}\int_{\overline\Omega\times[0,M]}\phi\cdot\d D1_u
\end{equation*}
for the set
\begin{multline*}
\tilde\K = \Big\{ \phi=(\phi^x,\phi^s)\in C^0(\overline\Omega\times[0,M];\R^2\times[0,\infty)) \,\Big|\, |\phi^x(x,s)|\leq\tau'(0),\,\phi^s(x,s)\geq0\ \forall \ (x,s)\in \overline\Omega\times[0,M], \\
\textstyle\qquad\hfill\left|\int_{s_1}^{s_2} \phi^x(x,s)\d s\right| \leq \tau(s_2-s_1) \ \forall \ x\in\overline\Omega, 0\leq s_1<s_2\leq M \Big\}.
\end{multline*}
\end{theorem}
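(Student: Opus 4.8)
The plan is to establish the statement in three parts, corresponding to the pointwise equality $\tilde\E(u)=\G(1_u)$ and the two claimed equalities of minima. The first part is the heart of the matter: since $\tilde\E(u)\geq\G(1_u)$ is already known from \cite{AlBoDa03,BrRoWi2018}, I only need the reverse inequality $\G(1_u)\geq\tilde\E(u)$. The idea is to exhibit, for a given $u\in\A_u$, a near-optimal competitor $\phi\in\K$ whose pairing against $D1_u$ recovers the surface energy $\tilde\E(u)$. Recall that $D1_u$ is a measure concentrated on the (rescaled) graph of $u$ over $\overline\Omega$: its absolutely continuous part lives over the approximate-continuity points and integrates the gradient term, while over the jump set $S_u$ the measure $D1_u$ has a component supported on the vertical segment $\{x\}\times[u^-(x),u^+(x)]$ carrying the jump contribution. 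Testing $\phi$ against these two pieces produces exactly $\int\phi^x\cdot\nu_u^\perp$-type terms on the continuous part and $\int_{u^-}^{u^+}\phi^x\,\d s$ on the jump part. The constraints defining $\K$ — namely $|\phi^x|\leq\tau'(0)$, $\phi^s\geq 0$, and $|\int_{s_1}^{s_2}\phi^x\,\d s|\leq\tau(s_2-s_1)$ — are precisely calibrated so that the supremum of these pairings equals $\tau'(0)|Du|(\overline\Omega\setminus S_u)+\int_{S_u}\tau(|[u]|)\,\d\hd^1=\tilde\E(u)$. Concretely, on the diffuse part one aligns $\phi^x$ with the direction of $Du$ at maximal length $\tau'(0)$; on the jump part one chooses $\phi^x$ aligned with $\nu_u$ so that the integral constraint is saturated at $\tau(s_2-s_1)$ with $s_2-s_1=|[u]|$. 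The main obstacle is that $\K$ requires smooth, compactly supported $\phi$, whereas the optimal calibrating field is naturally only measurable and direction-dependent; I expect the bulk of the work to be a careful mollification/approximation argument showing these constraint-saturating fields can be approximated in $\K$ without losing the energy in the limit, exploiting that $\tau$ is concave (hence the integral constraint is compatible with averaging over $s$) and lower semicontinuous.

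For the second equality, $\min_{u\in\A_u}\G(1_u)=\min_{u\in\A_u\cap\BV(V;[0,M])}\G(1_u)$, the plan is a truncation argument. Given any admissible $u$, I would replace it by its truncation $u^M=\min\{\max\{u,0\},M\}$ to the range $[0,M]$. Since the boundary datum $u(\mu_+,\mu_-)$ already takes values in $[0,M]$ by the normalization assumption, the truncation preserves membership in $\A_u$. It remains to verify that truncation does not increase the cost, i.e.\ $\tilde\E(u^M)\leq\tilde\E(u)$. This follows because both the jump term and the gradient term of $\tilde\E$ are non-increasing under truncation: truncation can only remove or shrink jumps and can only decrease the total variation of the absolutely continuous and Cantor parts. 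Combined with the pointwise equality $\tilde\E=\G(1_\cdot)$ from the first part and the fact that the minimum over the smaller class is trivially $\geq$ the minimum over the larger, this yields the claimed equality.

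For the third equality, I must pass from the open-domain, smooth, non-compactly-truncated test class $\K$ to the restricted class $\tilde\K$ of merely continuous fields on the compact slab $\overline\Omega\times[0,M]$ with the constraints only imposed for $0\leq s_1<s_2\leq M$. The point is that once $u$ takes values in $[0,M]$, the subgraph measure $D1_u$ (restricted to $\overline\Omega\times\R$) is supported in $\overline\Omega\times[0,M]$, so only the values of $\phi$ on this slab enter the pairing; thus replacing $\K$ by its restriction to the slab changes neither the feasible pairings nor the supremum. The relaxation from $C_0^\infty$ to $C^0$ and the restriction of the integral constraint to $[0,M]$ is again justified by a density/approximation argument: any continuous field in $\tilde\K$ can be approximated by smooth compactly supported fields whose constraints are satisfied up to vanishing error, and conversely any $\phi\in\K$ restricts to a feasible element of $\tilde\K$. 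I would also remark that the infimum is attained — hence written as $\min$ — by invoking lower semicontinuity of $\tilde\E$ together with $\BV$-compactness of the admissible class in the prescribed range, so that a minimizing sequence has a convergent subsequence whose limit is admissible and optimal.
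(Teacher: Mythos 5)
Your handling of the second and third equalities (clipping to the range $[0,M]$, then a density argument to replace $\K$ by $\tilde\K$) matches the paper's proof and is unproblematic. The genuine gap lies in the core inequality $\G(1_u)\geq\tilde\E(u)$, specifically in your prescription for the diffuse part: ``align $\phi^x$ with the direction of $Du$ at maximal length $\tau'(0)$.'' Such a competitor does not exist --- not even as a merely measurable field to be mollified later --- unless $\tau$ is linear near $0$. First, $\tau'(0)=\infty$ is allowed and is exactly the branched transport case $\tau(m)=m^\alpha$. Second, even for finite $\tau'(0)$, any continuous field whose modulus is close to $\tau'(0)$ on an $s$-neighbourhood of the graph of $u$ violates the constraint $\left|\int_{s_1}^{s_2}\phi^x\,\d s\right|\leq\tau(s_2-s_1)$: a field of modulus $c$ on an $s$-band of width $h$ forces $ch\leq\tau(h)$, i.e.\ $c\leq\tau(h)/h$, which is strictly below $\tau'(0)$ whenever $\tau$ is not linear on $[0,h]$. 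Since the pairing with $D1_u$ only sees $\phi$ on the graph, while the constraint acts on $s$-integrals across bands around it, the difficulty is not (as you claim) merely that ``$\K$ requires smooth fields'': no fixed field of any regularity attains the diffuse term, and mollifying your ansatz necessarily loses energy. The missing idea --- and the way the paper proceeds --- is a preliminary reduction: prove the equality for costs satisfying $\tau(m)=\alpha m$ for $m\leq m_0$ with $\alpha<\infty$, where one can take $\phi^x=\alpha\psi(x)$ supported in an $s$-band of width $m_0$ following the (mollified) graph of $u$ (feasible precisely because $\alpha\min\{h,m_0\}\leq\tau(h)$ for all $h$), and then recover general $\tau$ by applying this to $\tau_n=\min\{\tau,\alpha_n\,\cdot\,\}$ with $\alpha_n\uparrow\tau'(0)$ and using monotone convergence $\tilde\E^{\tau_n}(u)\to\tilde\E^{\tau}(u)$. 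In other words, the supremum in $\G$ is approached along a sequence of fields with shrinking $s$-supports (and, if $\tau'(0)=\infty$, blowing-up sup-norms), not by approximating a single limiting calibration.

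Two further points are glossed over but are elaborations rather than gaps. For the diffuse band construction one must locate the graph of a $\BV$ function, which the paper does by mollifying $u$ itself and invoking Egorov's theorem to get uniform closeness off a small set, plus cutoffs near the jump region; and for the jump part, making ``$\phi^x$ aligned with $\nu_u$'' into a smooth admissible field requires the rectifiability machinery (covering $S_u$ up to $\varepsilon$ by a $C^1$ manifold, Vitali--Besicovitch covering by adapted squares, projection along the squares). Your remark that concavity of $\tau$ makes the integral constraint stable under averaging (Jensen) is indeed the correct reason mollification preserves admissibility there, and your saturation choice $\nu_u\,\tau(|[u]|)/|[u]|$ on the jump interval is the right one. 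But without the reduction to costs linear near zero described above, the proof as proposed cannot be completed.
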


\begin{proof}
We need to show $\tilde\E(u)\leq\G(1_u)$.
To this end it suffices to consider $\tau$ with $\tau(m)=\alpha m$ for all $m\leq m_0$, where $\alpha<\infty$ and $m_0>0$ are arbitrary.
Indeed, assume equality for such transportation costs, let $\tau$ be a given transportation cost, and set $\tau_n(m)=\min\{\tau(m),\alpha_nm\}$
for $0<\alpha_1<\alpha_2<\ldots$ a sequence with $\alpha_n\to\tau'(0)$ as $n\to\infty$.
Decorating $\G$ and $\tilde\E$ with a superscript to indicate what transportation cost they are based on, we have
\begin{equation*}
\G^\tau(1_u)\geq\G^{\tau_n}(1_u)=\tilde\E^{\tau_n}(u)\to\tilde\E^\tau(u)\quad\text{as }n\to\infty
\end{equation*}
by monotone convergence, as desired.

By \cite[Thm.\,3.78]{AmFuPa2000}, $S_u$ is countably $\hd^1$-rectifiable.
Furthermore, $[u]\in L^1(\hd^1\restr S_u)$.
Thus, the jump part of $Du$ can be treated via a decomposition strategy as for instance also used in \cite[Lem.\,4.2]{ChMeFe2019}.
In detail, let $\varepsilon>0$ be arbitrary.
Since $S_u$ is rectifiable there exists a compact (oriented) $C^1$-manifold $\manifold\subset\R^2$ with $|Du|(S_u\setminus\manifold)<\varepsilon$.
For every $x\in\manifold$ and $\ell>0$ let us denote by $F_x^\ell\subset\R^2$ the closed square of side length $2\ell$,
centred at $x$ and axis-aligned with the tangent and the normal vector to $\manifold$ in $x$.
Also denote that rigid motion by $R_x:\R^2\to\R^2$ which maps $x$ to $0$ and the unit tangent of $\manifold$ in $x$ to $(0,1)$ (thus $R_x(F_x^\ell)=[-\ell,\ell]^2$).
Now fix $\delta>0$ such that we have
\begin{gather*}
R_x(\manifold\cap F_x^\delta)\text{ is the graph of a map }g_x\in C^1([-\delta,\delta];[-\delta,\delta])\text{ with }g_x(0)=g_x'(0)=0,\\
|Du|\left(\bigcup_{x\in\manifold}F_x^\delta\setminus S_u\right)<\varepsilon
\end{gather*}
(the latter can be achieved since $\bigcup_{x\in\manifold}F_x^\delta\setminus S_u\to\manifold\setminus S_u$ monotonically as $\delta\to0$
and thus by outer regularity of $|Du|$ we have $|Du|(\bigcup_{x\in\manifold}F_x^\delta\setminus S_u)\to|Du|(\manifold\setminus S_u)=0$ as $\delta\to0$).
Now $\manifold\subset\bigcup_{x\in\manifold}\bigcup_{\ell<\delta}F_x^\ell$
so that by Vitali--Besicovitch covering theorem \cite[Thm.\,2.19]{AmFuPa2000}
there is a countable disjoint family of cubes $F_{x_1}^{\ell_1},F_{x_2}^{\ell_2},\ldots$, $x_i\in\manifold$, $\ell_i<\delta$,
whose union $F$ satisfies $|Du|(\manifold\setminus F)=0$.
By taking a finite subfamily $F_{x_1}^{\ell_1},\ldots,F_{x_K}^{\ell_K}$ we achieve $|Du|(\manifold\setminus\bigcup_{k=1}^KF_{x_k}^{\ell_k})<\varepsilon$.

On $F_{x_i}^{\ell_i}$ define the projection $p_i:F_{x_i}^{\ell_i}\to\manifold$ by
\begin{equation*}
p_i=R_{x_i}^{-1}\circ g_{x_i}\circ\pi_{\R\times\{0\}}\circ R_{x_i}
\end{equation*}
(where $\pi_{\R\times\{0\}}$ returns first coordinate of a vector; $p_i$ is the projection along one axis direction of $F_{x_i}^{\ell_i}$).
Furthermore define $\psi$ as
\begin{equation*}
\tilde\psi_i(x,s)=\begin{cases}
\nu_{\manifold}(p_i(x))\frac{\tau(|[u](p_i(x))|)}{[u](p_i(x))}&\text{if }\dist(x,\partial F_{x_i}^{\ell_i})>\eta\text{ and}\\&\quad s\in[\min\{u^-(p_i(x)),u^+(p_i(x))\},\max\{u^-(p_i(x)),u^+(p_i(x))\}],\\
0&\text{else}
\end{cases}
\end{equation*}
with $\nu_{\manifold}$ being the normal vector to $\manifold$.
Note that by construction we have $\left|\int_{s_1}^{s_2}\tilde\psi_i(x,s)\,\d s\right|\leq\tau(s_2-s_1)$ for all $s_1<s_2$, $x\in F_{x_i}^{\ell_i}$
as well as $|\tilde\psi_i(x,s)|\leq\alpha$ due to $\tau(m)\leq\alpha m$.
Mollifying $\tilde\psi_i$ with a mollifier $\rho_\eta(x)=\rho(x/\eta)/\eta$ for $\rho\in C_0^\infty([-1,1]^3;[0,\infty))$ with unit integral,
the above constraints stay satisfied by Jensen's inequality,
and we obtain some $\psi_i=\rho_\eta*\tilde\psi_i\in C_0^\infty(F_{x_i}^{\ell_i}\times\R;\R^2)$.
Extending $\psi_i$ by zero to $V\times\R$ we can define $\phi_i\in\K$ as $\phi_i(x,s)=(\psi_i(x,s),0)$.
We now set $\hat\phi=\sum_{i=1}^K\phi_i$.
Note that in the above we can choose $\eta$ small enough such that $\int_{\overline\Omega\times\R}\hat\phi\cdot\d D1_u\geq\int_{S_u}\tau(|[u]|)\,\d\hd^1-5\alpha\varepsilon$.
Indeed, abbreviating
\begin{equation*}
\chi_{(a,b)}(s)=\begin{cases}1&\text{if }a<s<b,\\-1&\text{if }b<s<a,\\0&\text{else,}\end{cases}
\end{equation*}
we can calculate
\begin{align*}
\int_{\overline\Omega\times\R}\hat\phi\cdot\d D1_u
&=\int_{(\overline\Omega\cap\bigcup_{i=1}^KF_{x_i}^{\ell_i})\times\R}\hat\phi\cdot\d D1_u
\geq\int_{(\overline\Omega\cap S_u\cap\bigcup_{i=1}^KF_{x_i}^{\ell_i})\times\R}\hat\phi\cdot\d D1_u-\alpha\varepsilon\\
&=\sum_{i=1}^K\int_{\overline\Omega\cap\manifold\cap F_{x_i}^{\ell_i}}\int_\R\chi_{(u^-(x),u^+(x))}(s)\nu_\manifold(x)\cdot\psi_i(x,s)\,\d s\,\d\hd^1(x)-2\alpha\varepsilon\\
&\underset{\eta\to0}{\longrightarrow}\sum_{i=1}^K\int_{\overline\Omega\cap\manifold\cap F_{x_i}^{\ell_i}}\int_\R\chi_{(u^-(x),u^+(x))}(s)\nu_\manifold(x)\cdot\tilde\psi_i(x,s)\,\d s\,\d\hd^1(x)-2\alpha\varepsilon\\
&=\sum_{i=1}^K\int_{\overline\Omega\cap\manifold\cap F_{x_i}^{\ell_i}}\tau(|[u](x)|)\,\d\hd^1(x)-2\alpha\varepsilon
\geq\int_{\overline\Omega\cap\manifold}\tau(|[u](x)|)\,\d\hd^1(x)-3\alpha\varepsilon\\
&\geq\int_{\overline\Omega\cap S_u}\tau(|[u](x)|)\,\d\hd^1(x)-4\alpha\varepsilon
\end{align*}
since for $\eta\to0$ the $\psi_i$ converge to the $\tilde\psi_i$ in $L^1(\hd^2\restr\manifold\times\R)$
and $(x,s)\mapsto\chi_{(u^-(x),u^+(x))}(s)\nu_\manifold(x)$ is in $L^\infty(\hd^2\restr\manifold\times\R)$.

Now consider the cost associated with the diffuse part of $Du$.
By \cite[Prop.\,3.64]{AmFuPa2000}, $u_\xi=\rho_\xi*u\to\tilde u$ pointwise on $V\setminus S_u$,
where $\rho_\xi$ is some mollifier with length scale $\xi$ and $\tilde u$ is a particular representative of $u$, the so-called approximate limit.
Consequently, $u_\xi\to\tilde u$ pointwise $|Du|\restr V\setminus S_u$-almost everyhere.
Thus, by Egorov's theorem there exists some measurable set $B\subset V$ such that $|Du|(V\setminus S_u\setminus B)<\varepsilon$ and $u_\xi\to\tilde u$ uniformly on $B$.
Let $\xi$ be small enough such that $|\tilde u-u_\xi|<m_0/4$ on $B$
and let $\psi\in C_0^\infty(V;\R^2)$ such that $|\psi|\leq1$ everywhere and $\int_{\overline\Omega}\psi\cdot\d Du\geq|Du|(\overline\Omega)-\varepsilon$.
Furthermore fix $\eta>0$ such that $|Du|\left(U_\eta\setminus\bigcup_{k=1}^KF_{x_k}^{\ell_k}\right)$ for the $\eta$-neighbourhood $U_\eta$ of $\partial V\cup\bigcup_{k=1}^KF_{x_k}^{\ell_k}$.
We now define
\begin{equation*}
\bar\phi(x,s)={\alpha\psi(x)\chi_1(x)\chi_2(t-u_\xi(x))\choose0},
\end{equation*}
where $\chi_1\in C_0^\infty(V;[0,1])$ is a cutoff function which is zero on $\bigcup_{k=1}^KF_{x_k}^{\ell_k}$ and one outside $U_\eta$,
and where $\chi_2\in C_0^\infty(\R;[0,1])$ is a cutoff function which is one on $[-m_0/4,m_0/4]$ and zero outside $[-m_0/2,m_0/2]$.
Note that $\bar\phi\in\K$ by construction and
\begin{align*}
\int_{\overline\Omega\times\R}\bar\phi\cdot\d D1_u
&=\alpha\int_{\{(x,\tilde u(x))\,|\,x\in\overline\Omega\}}\chi_1(x){\psi(x)\choose0}\cdot\d D1_u(x,s)\\
&\geq\alpha\int_{\{(x,\tilde u(x))\,|\,x\in\overline\Omega\setminus S_u\}}{\psi\choose0}\cdot\d D1_u
-\alpha|Du|(S_u\setminus U_\eta)-\alpha|Du|\left(U_\eta\setminus\bigcup_{k=1}^KF_{x_k}^{\ell_k}\right)\\&\qquad-\alpha|Du|(\overline\Omega\setminus S_u\setminus B)\\
&\geq\alpha\int_{\{(x,\tilde u(x))\,|\,x\in\overline\Omega\setminus S_u\}}{\psi\choose0}\cdot\d D1_u
-3\alpha\varepsilon
=\alpha\int_\R\int_{\overline\Omega\setminus S_u}\psi\cdot\d D\chi_{\{u>s\}}\,\d s
-3\alpha\varepsilon\\
&=\alpha\int_{\overline\Omega\setminus S_u}\psi\cdot\d Du
-3\alpha\varepsilon
\geq\alpha|Du|(\overline\Omega\setminus S_u)
-4\alpha\varepsilon,
\end{align*}
where $\chi_{\{u>s\}}$ is the characteristic function of the $s$-superlevel set of $u$
and where in the last equality we used the coarea formula.

Summarizing, we have $\phi=\hat\phi+\bar\phi\in\K$ with
$\int_{\overline\Omega\times\R}\phi\cdot\d D1_u\geq\int_{\overline\Omega\cap S_u}\tau(|[u](x)|)\,\d\hd^1(x)+\alpha|Du|(\overline\Omega\setminus S_u)-8\alpha\varepsilon=\tilde\E(u)-8\alpha\varepsilon$,
and thus $\G(1_u)\geq\tilde\E(u)$ follows from the arbitrariness of $\varepsilon$.

From the definition of $\tilde\E$ it is obvious that $\tilde\E(u)$ decreases if $u$ is clipped to the range $[0,M]$.
Thus, minimizers of $\G(1_u)=\tilde\E(u)$ among all admissible images $u$ lie in $\A_u\cap\BV(V;[0,M])$,
and one may restrict the integral in the definition of $\G$ to $\overline\Omega\times[0,M]$.
Finally, by density of $\{\phi\in\K\,|\,\phi^s=0\}$ in $\tilde\K$ with respect to the supremum norm, we may replace $\K$ with $\tilde\K$ without changing the supremum.
\end{proof}

Note that we could even set $\phi^s\equiv0$ in $\tilde\K$ without changing $\sup_{\phi\in\tilde\K}\int_{\overline\Omega\times[0,M]}\phi\cdot\d D1_u$
since the integral increases if $\phi^s$ decreases.
The problem of minimizing $\G(1_u)$ among all characteristic functions of subgraphs of admissible images is not convex, since the space of characteristic functions is not.
The underlying idea of \cite{AlBoDa03,PoCrBiCh2009,BrRoWi2018} is that one does not lose much by convexifying the domain of $\G$ as follows.

\begin{definition}[Convex cost functional]\label{def:convexCost}
Let $\mu_+,\mu_-\in\M_+(\partial\Omega)$ with equal mass and $M$, $\tilde\K$ from \cref{thm:equivalenceImageSurface}. We set
\begin{equation*}
\C = \{ v\in \BV(V\times\R;[0,1]) \,|\, v=1_{u(\mu_+,\mu_-)} \text{ on } (V\times\R)\setminus(\overline{\Omega}\times[0,M]) \},
\end{equation*}
where we extended $1_{u(\mu_+,\mu_-)}$ by $1$ to $V\times(-\infty,0)$ and by $0$ to $V\times(M,\infty)$.
The \emph{convex generalized branched transport cost} is $\tilde\G:\C\to\R$,
\begin{equation*}
\tilde\G(v)=\sup_{\phi\in\tilde\K}\int_{\overline\Omega\times[0,M]}\phi\cdot\d Dv.
\end{equation*}
\end{definition}

By definition and \cref{thm:equivalenceImageSurface}, $\tilde\G$ coincides with $\G$ on functions of the form $v=1_u$ with $u\in\A_u$.
The following \namecref{thm:propertiesConvex} shows that the problem of minimizing $\tilde\G$
is related to the original generalized branched transport problem in the sense that if the minimizer of $\tilde\G$ is binary, then it is a solution of the original problem.
The \namecref{thm:propertiesConvex} also shows that the original and the convex minimization problem cannot be fully equivalent since sometimes $\tilde\G$ has nonbinary minimizers
(however, those non-binary minimizers may coexist with binary minimizers so that the minimization problems might still be equivalent after selecting the binary minimizers).

\begin{proposition}[Properties of convex cost functional]\label{thm:propertiesConvex}
Let $\mu_+,\mu_-\in\M_+(\partial\Omega)$ with equal mass.
\begin{enumerate}
\item
$\tilde\G$ is convex, weakly-* lower semi-continuous, and satisfies $\inf_{v\in\C}\tilde\G(v)\leq\min_{u\in\A_u}\G(1_u)$.
\item
If a minimizer $v\in\C$ of $\tilde\G$ is binary, then $v=1_u$ for a minimizer $u\in\A_u$ of $\G(1_u)$.
\item\label{item:nonbinary}
If $\tau$ is not linear, there exist $\mu_+,\mu_-\in\M_+(\partial\Omega)$ such that if $\tilde\G$ has minimizers, at least some of them are nonbinary.
\end{enumerate}
\end{proposition}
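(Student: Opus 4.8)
The plan is to treat the three assertions separately, the substance lying in the second and third.

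\emph{Part (i).} For every fixed $\phi\in\tilde\K$ the map $v\mapsto\int_{\overline\Omega\times[0,M]}\phi\cdot\d Dv$ is linear in $v$, so $\tilde\G$ is a supremum of linear functionals and therefore convex. Each such functional is moreover weakly-* continuous: if $v_n\weakstarto v$ in $\BV(V\times\R)$ then $Dv_n\weakstarto Dv$ as measures, and continuity of $\phi\in C^0(\overline\Omega\times[0,M];\R^2\times[0,\infty))$ makes the integrals converge; a supremum of weakly-* continuous functionals is weakly-* lower semi-continuous. For the inequality I would note that every $u\in\A_u\cap\BV(V;[0,M])$ yields $1_u\in\C$ (the extension conventions in \cref{def:convexCost} together with $0\le u\le M$ and $u=u(\mu_+,\mu_-)$ on $V\setminus\overline\Omega$ are precisely the defining boundary condition) and that $\tilde\G(1_u)=\G(1_u)$. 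Taking the infimum over such $u$ and invoking $\min_{u\in\A_u}\G(1_u)=\min_{u\in\A_u\cap\BV(V;[0,M])}\G(1_u)$ from \cref{thm:equivalenceImageSurface} gives $\inf_{v\in\C}\tilde\G(v)\le\min_{u\in\A_u}\G(1_u)$.

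\emph{Part (ii).} The key point is that $\tilde\K$ constrains $\phi^x$ but leaves $\phi^s\ge0$ unbounded from above. Splitting $Dv=(D_xv,D_sv)$, the contribution $\int_{\overline\Omega\times[0,M]}\phi^s\,\d D_sv$ can be driven to $+\infty$ by taking $\phi^s$ large on the support of the positive part of $D_sv$, unless $D_sv\le0$. Hence any finite-energy $v\in\C$—in particular any minimizer, which is of finite energy since a finite-cost admissible image exists—satisfies $\partial_sv\le0$, i.e. $s\mapsto v(x,s)$ is non-increasing. A binary non-increasing function of $s$ is the subgraph indicator $1_u$ of $u(x)=\sup\{s:v(x,s)=1\}$, with $u\in\BV(V)$ by the usual subgraph correspondence; the defining boundary condition of $\C$ then forces $u\in\A_u\cap\BV(V;[0,M])$. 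Finally the chain $\G(1_u)=\tilde\G(1_u)=\tilde\G(v)=\inf_{w\in\C}\tilde\G(w)\le\min_{u'\in\A_u}\G(1_{u'})\le\G(1_u)$ must consist of equalities, so $u$ minimizes $\G(1_\cdot)$.

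\emph{Part (iii).} I would combine convexity with a symmetric configuration admitting two distinct optima. Concretely, I would choose $\mu_+,\mu_-$ supported on finitely many boundary points in a position invariant under a nontrivial reflection $\sigma$ of $\Omega$, for which the branched transport problem has an optimal mass flux $\flux_1$ together with its mirror image $\flux_2=\sigma_{\#}\flux_1$, both optimal by $\sigma$-invariance of $\E$ and the constraints and genuinely distinct, so that the associated images satisfy $u_1\ne u_2$ and thus $1_{u_1}\ne1_{u_2}$. Here the nonlinearity of $\tau$ is essential: strict concavity rewards merging and selects an asymmetric optimal topology, so that the symmetric competitor is strictly suboptimal and two mirror-image optima coexist; for linear $\tau$ the image energy collapses to the convex functional $\tau'(0)|Du|(\overline\Omega)$, whose symmetrized competitors do no worse and for which this mechanism disappears. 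Optimality of $\flux_1,\flux_2$ together with the absence of a duality gap, i.e. $\G(1_{u_1})=\G(1_{u_2})=\min_{u\in\A_u}\G(1_u)=\inf_{v\in\C}\tilde\G(v)$, I would certify by a single calibration $\phi\in\tilde\K$ of the type built in \cref{sec:calibrations}. Given this, $1_{u_1}$ and $1_{u_2}$ both minimize $\tilde\G$, hence by part (i) so does $\tfrac12(1_{u_1}+1_{u_2})$, which is non-binary because $u_1\ne u_2$ forces $1_{u_1}\ne1_{u_2}$ on a set of positive measure. (Should one instead only obtain the strict inequality $\inf_{v\in\C}\tilde\G<\min_{u\in\A_u}\G(1_u)$, then every binary element of $\C$ has energy $\ge\min_{u\in\A_u}\G(1_u)$ and cannot minimize, so again every minimizer is non-binary.)

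\emph{Main obstacle.} The difficulty is entirely in part (iii): exhibiting a configuration whose non-convex network problem provably possesses two distinct global optima, and certifying their optimality. Because the energy landscape is non-convex, direct optimality proofs are delicate, which is why I would route the verification through the calibration machinery of \cref{sec:calibrations}; showing that strict concavity of $\tau$ renders the symmetric competitor strictly suboptimal is the step that genuinely uses the nonlinearity assumption.
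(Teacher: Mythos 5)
Your parts (i) and (ii) coincide with the paper's proof: in (i) the supremum-of-linear-functionals argument, and in (ii) the observation that the unboundedness of $\phi^s$ forces any finite-energy $v$ to be nonincreasing in $s$, so that a binary minimizer is a subgraph indicator $1_u$ with $u$ minimizing $\G(1_\cdot)$. Your part (iii) also shares the paper's skeleton: produce two distinct binary minimizers and average them. Your gap/no-gap dichotomy is moreover a legitimate substitute for the paper's device, which is a proof by contradiction in which the hypothesis that all minimizers are binary, combined with part (ii), supplies the equality $\min_{v\in\C}\tilde\G(v)=\min_{u\in\A_u}\G(1_u)$ for free. For exactly this reason your proposed calibration step is superfluous, and fortunately so: the paper's appendix documents that the authors could not construct an exact calibration even for the simplest configuration with two optima, so any argument whose completeness hinges on such a certificate would be in serious trouble.

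The genuine gap is your mechanism for producing two distinct minimizers. First, ``$\tau$ not linear'' does not imply strict concavity: the urban planning cost $\tau^{\mathrm{up}}(m)=\min\{am,m+b\}$ is nonlinear but piecewise linear, so the symmetry-breaking rationale you invoke does not cover all admissible transportation costs. Second, even for strictly concave $\tau$, the claim that some reflection-symmetric configuration admits only asymmetric optima is a strong statement that you do not prove and that fails for the natural candidates: in the paper's own two-to-two example (\cref{fig:Example2To2BTConvexification}) both competing optimal topologies are themselves invariant under the relevant reflections, so mirror images yield nothing there, and the asymmetric optima seen in the four-to-four experiments are known only numerically. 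The paper instead gets non-uniqueness from a bifurcation argument, which is weaker and more robust: nonlinearity of a concave $\tau$ with $\tau(0)=0$ gives strict subadditivity $\tau(m_1+m_2)<\tau(m_1)+\tau(m_2)$ at some masses, so merged transport is strictly better when the two source/sink pairs are close while separate transport is strictly better when they are far apart; by continuity of the optimal cost within each of the finitely many topology classes there is a crossover parameter at which two distinct topologies are simultaneously optimal. This claim — two distinct optima, not necessarily related by any symmetry — is all your dichotomy needs, and your symmetry-breaking step should be replaced by it.
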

\begin{proof}
\begin{enumerate}
\item
As the supremum over linear functionals on a convex domain, $\tilde\G$ is convex and lower semi-continuous with respect to the weak-* topology.
Furthermore, $\inf_v\tilde\G(v)\leq\inf_{u\in\A_u}\tilde\G(1_u)=\min_{u\in\A_u}\G(1_u)$.
\item
First note that $\tilde\G(v)=\infty$ unless $v$ is monotonically decreasing in $s$-direction.
Indeed, if $(Dv)_3$ is not nonpositive, there exists a continuous $\phi^s\geq0$ with $\int_{\overline\Omega\times[0,M]}\phi^s\,\d(Dv)_3>0$
(for instance take the positive part of some $\psi\in C^0(\overline\Omega\times[0,M])$ with $\int_{\overline\Omega\times[0,M]}\psi\,\d(Dv)_3\approx\|(Dv)_3\|_\M$)
so that $\tilde\G(v)\geq\sup_{\lambda>0}\int_{\overline\Omega\times[0,M]}(0,0,\lambda\phi^s)\cdot\d Dv=\infty$.
Thus, $v$ can be represented as $1_u$ for some function $u\in\A_u$.
Due to the previous point, $1_u$ must be a minimizer of $\G$.
\item
Assume the contrary, that is, for any $\mu_+,\mu_-\in\M_+(\partial\Omega)$ with equal mass the minimizers of $\tilde\G$ are binary.
Since $\tau$ is not linear, there exist $\mu_+,\mu_-$ such that the corresponding generalized branched transport problem has no unique minimizer
(see for instance \cref{fig:Example2To2BTConvexification}).
Thus, there are $u_1,u_2\in\A_u$, $u_1\neq u_2$ with $\min_{u\in\A_u}\G(1_u)=\G(1_{u_1})=\G(1_{u_2})=\tilde\G(1_{u_1})=\tilde\G(1_{u_2})=\min_{v\in\C}\tilde\G(v)$,
where the last equality follows from the previous point.
However, since $\C$ and $\tilde\G$ are convex, $(1_{u_1}+1_{u_2})/2$ is also a minimizer of $\tilde\G$, which is nonbinary.
\qedhere
\end{enumerate}
\end{proof}

\begin{figure}
	\centering
	\includegraphics[width=0.6\textwidth]{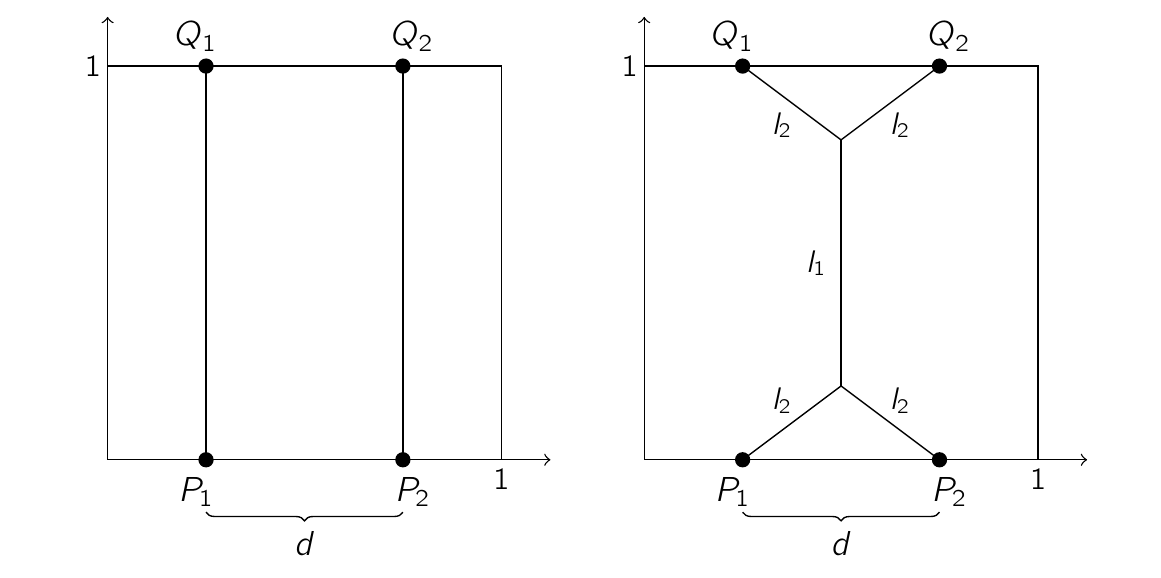}
	\caption{Generalized branched transport in $\Omega=[0,1]^2$ from two points at $P_1,P_2\in\partial\Omega$ with equal mass to two points $Q_1,Q_1\in\partial\Omega$ with equal mass.
	Depending on the mass and the point distance $d$, the optimal network has the left or the right topology.
	At the bifurcation point, both topologies are optimal.}
	\label{fig:Example2To2BTConvexification}
\end{figure}

When surface energies are relaxed to energies over functions $v\in\BV(V\times\R;[0,1])$ as in our case, one typically uses the coarea formula to show
that for a minimizer $v$ the characteristic functions of its superlevel sets have the same minimizing cost and thus there are always binary minimizers.
In the case of a one-homogeneous $\tau(m)=\alpha m$ this works as follows,
\begin{equation*}
\tilde\G(v)
=\int_{\overline\Omega\times[0,M]}\tau\left(\frac{Dv}{|Dv|}\right)\,\d Dv
=\int_0^1\int_{\overline\Omega\times[0,M]}\tau\left(\frac{D\chi_{\{v>t\}}}{|D\chi_{\{v>t\}}|}\right)\,\d D\chi_{\{v>t\}}\,\d t
=\int_0^1\tilde\G(\chi_{\{v>t\}})\,\d t,
\end{equation*}
where we exploited the one-homogeneity of $\tau$ and the coarea formula
(a similar calculation can be performed for the lifting of the generalized Mumford--Shah functional $J$ with $h\equiv0$).
If $v$ is a minimizer so that $\tilde\G(v)\leq\tilde\G(\chi_{\{v>t\}})$ for all $t$, then by the above equality we necessarily have $\tilde\G(v)=\tilde\G(\chi_{\{v>t\}})$ for almost all $t\in[0,1]$.
However, a formula as the above is not true in our case.

\begin{proposition}[Convex cost of superlevel sets]
It holds $\tilde\G(v)\leq\int_0^1\tilde\G(\chi_{\{v>t\}})\,\d t$, and this is not an equality.
\end{proposition}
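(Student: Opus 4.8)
The plan is to obtain the inequality directly from the coarea formula for $\BV$ functions and to establish strictness through an explicit pair of admissible images on which the image cost is strictly supermodular under pointwise maximum and minimum.

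For the inequality I would fix an arbitrary $\phi\in\tilde\K$ and apply the coarea formula to the vector measure $Dv$. Since $v\in\BV(\overline\Omega\times[0,M];[0,1])$ one has $Dv=\int_0^1 D\chi_{\{v>t\}}\,\d t$ as measures, so that pairing with the continuous field $\phi$ yields
\[
\int_{\overline\Omega\times[0,M]}\phi\cdot\d Dv=\int_0^1\int_{\overline\Omega\times[0,M]}\phi\cdot\d D\chi_{\{v>t\}}\,\d t\leq\int_0^1\tilde\G(\chi_{\{v>t\}})\,\d t,
\]
where the last step holds because $\phi\in\tilde\K$ while $\tilde\G(\chi_{\{v>t\}})$ is the supremum of the inner integral over all fields in $\tilde\K$. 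Taking the supremum over $\phi\in\tilde\K$ on the left gives $\tilde\G(v)\leq\int_0^1\tilde\G(\chi_{\{v>t\}})\,\d t$. The single field $\phi$ must serve all slices simultaneously, which is exactly where room for a gap arises.

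For strictness I would exhibit one $v\in\C$ with positive gap; crucially this requires no optimality, only two distinct admissible images sharing the boundary datum. I take $\Omega=(-1,1)^2$, $\mu_+=2\delta_P$, $\mu_-=2\delta_Q$ with $P=(-1,0)$, $Q=(1,0)$, so $M=2$, and let $u_1\in\A_u$ be the image jumping by $2$ across the straight segment $\overline{PQ}$ (a single mass-$2$ edge), while $u_2\in\A_u$ routes the same mass through a thin symmetric lens, i.e.\ two mass-$1$ arcs from $P$ to $Q$ enclosing a region on which $u_2=1$. Both carry the identical boundary datum $u(\mu_+,\mu_-)\in\{0,2\}$ (the value $1$ of $u_2$ is confined to the interior and pinched off at $P,Q$), so $v=\tfrac12(1_{u_1}+1_{u_2})\in\C$ is admissible and nonbinary, and its superlevel sets are the subgraph of $u_1\vee u_2$ for $t\in(0,\tfrac12)$ and of $u_1\wedge u_2$ for $t\in(\tfrac12,1)$, whence $\int_0^1\tilde\G(\chi_{\{v>t\}})\,\d t=\tfrac12\bigl(\tilde\E(u_1\vee u_2)+\tilde\E(u_1\wedge u_2)\bigr)$ by \cref{thm:equivalenceImageSurface}.

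Finally I would read off the jump structure of the lattice combinations. A short case distinction over the four regions cut out by $\{y=0\}$ and the two arcs shows that $u_1\vee u_2$ jumps by $1$ across $\{y=0\}$ and by $1$ across the upper arc, and symmetrically $u_1\wedge u_2$ jumps by $1$ across $\{y=0\}$ and across the lower arc. In the thin-lens limit all four relevant curves have length $\to2$, so $\tilde\E(u_1\vee u_2)+\tilde\E(u_1\wedge u_2)\to 8\tau(1)$, whereas $\tilde\E(u_1)+\tilde\E(u_2)\to 2\tau(2)+4\tau(1)$; by strict subadditivity of the nonlinear concave $\tau$, namely $\tau(2)<2\tau(1)$, the former strictly exceeds the latter for a sufficiently thin lens. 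Combining this with convexity of $\tilde\G$, i.e.\ $\tilde\G(v)\leq\tfrac12\bigl(\tilde\E(u_1)+\tilde\E(u_2)\bigr)$, yields $\int_0^1\tilde\G(\chi_{\{v>t\}})\,\d t>\tfrac12\bigl(\tilde\E(u_1)+\tilde\E(u_2)\bigr)\geq\tilde\G(v)$. I expect the main obstacle to be the bookkeeping of the jump sets of $u_1\vee u_2$ and $u_1\wedge u_2$ — in particular correctly accounting for the induced jump of size $1$ along $\{y=0\}$ that the maximum and minimum create even though $u_2$ is continuous there — together with verifying that both images genuinely share the boundary datum so that indeed $v\in\C$.
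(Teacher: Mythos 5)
Your proof is correct, and while the inequality part coincides with the paper's (the coarea identity $Dv=\int_0^1 D\chi_{\{v>t\}}\,\d t$ paired against a fixed $\phi$ is just the measure-level form of the paper's Jensen/convexity argument with $v=\int_0^1\chi_{\{v>t\}}\,\d t$), your strictness argument is genuinely different. The paper takes a diffuse linear ramp $u(x)=mx_1$ and the vertically smeared $v=\int_0^1 1_{u+tc}\,\d t$, splits $\tilde\G$ into a boundary and an interior part, and shows the interior cost of $v$ is at most $\leb^2(\Omega)\,m\,\tau(c)/c$, strictly below the common cost $\tau'(0)\,m\,\leb^2(\Omega)$ of every superlevel set; the mechanism is that smearing along the lifted direction converts the diffuse rate $\tau'(0)$ into the cheaper chord slope $\tau(c)/c$. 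You instead average two binary competitors (a single mass-$2$ edge versus a two-arc lens), observe that the superlevel sets of the average are the subgraphs of $u_1\vee u_2$ and $u_1\wedge u_2$, and exploit that the lattice operations split the mass-$2$ jump across $\overline{PQ}$ into two mass-$1$ jumps, one in each of max and min, so the right-hand side exceeds $\tfrac12(\tilde\E(u_1)+\tilde\E(u_2))\geq\tilde\G(v)$ by exactly $2\tau(1)-\tau(2)>0$ --- note this gap is independent of the lens thickness, so your thin-lens limit is not even needed. Your route buys several things: the witness $v$ lies honestly in $\C$ and both sides of the inequality are the actual functional (no splitting into $\tilde\G_1+\tilde\G_2$, no mollification), all quantities are finite even when $\tau'(0)=\infty$ as in branched transport, and the construction directly mirrors the nonbinary-minimizer mechanism of \cref{thm:propertiesConvex}\eqref{item:nonbinary}. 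The paper's route, in turn, isolates the mechanism most relevant to the tightness discussion, namely that vertical smearing cheapens the diffuse part even for a single topology. Two cosmetic points: with the natural sign convention it is the \emph{lower} arc across which $u_1\vee u_2$ jumps (and the upper one for $u_1\wedge u_2$), though the sum is unaffected; and since the proposition allows arbitrary nonlinear concave $\tau$, you should scale the masses to $2m$ and $m$ with $\tau(2m)<2\tau(m)$ (such $m$ exists for every nonlinear concave $\tau$ with $\tau(0)=0$, but not necessarily $m=1$).
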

\begin{proof}
The inequality holds by the convexity of $\tilde\G$ and Jensen's inequality in combination with $v=\int_0^1\chi_{\{v>t\}}\,\d t$.
To show that the inequality is sometimes strict,
first note that by an analogous construction as in the proof of \cref{thm:equivalenceImageSurface}
we have $\tilde\G(v)=\tilde\G_1(v)+\tilde\G_2(v)=\sup_{\phi\in\tilde\K}\int_{\partial\Omega\times\R}\phi\cdot\d Dv+\sup_{\phi\in\tilde\K}\int_{\Omega\times\R}\phi\cdot\d Dv$
with $\tilde\G_i(v)\leq\int_0^1\tilde\G_i(\chi_{\{v>t\}})\,\d t$, $i=1,2$, for the same reason as above.
Now consider the example $v(x,s)=\int_0^11_{u+tc}(x,s)\,\d t$ for $(x,s)\in\Omega\times\R$ with $u(x)=mx_1$ for some $c,m>0$.
We have
\begin{equation*}
\tilde\G_2(\chi_{\{v>t\}})
=\G(1_{u+tc})
=\tilde\E(u)
=\tau'(0)m\leb^2(\Omega),
\end{equation*}
while
\begin{align*}\textstyle
\tilde\G_2(v)
&\leq\int_\Omega\sup_{\phi\in\tilde\K}\int_0^M\phi(x,s)\cdot\nabla v(x,s)\,\d s\,\d x
=\int_\Omega\sup_{\phi\in\tilde\K}\int_{u(x)-c}^{u(x)}\phi_1(x,s)\tfrac{m}c\,\d s\,\d x\\
&=\leb^2(\Omega)\tfrac mc\sup\left\{\int_0^c\psi\,\d s\,\middle|\,\psi:[0,c]\to\R,\left|\int_{s_1}^{s_2}\psi(s)\,\d s\right|\leq\tau(s_2-s_1)\text{ for all }0\leq s_1<s_2\leq c\right\}\\
&=\leb^2(\Omega)m\tfrac{\tau(c)}c.
\end{align*}
Summarizing, $\tilde\G_2(v)\leq\leb^2(\Omega)m\tfrac{\tau(c)}c<\tau'(0)m\leb^2(\Omega)=\int_0^1\tilde\G_2(\chi_{\{v>t\}})\,\d t$, as desired.
\notinclude{Note that one can easily check by checking the Euler--Lagrange equations that at each $x$ with $\{x\}\times\R\cap S_v\neq\emptyset$,
$\psi(x,s)=\tau'(s-r)\nu_{S_v}(x,s)$ if $[v](x,s)$ is decreasing in $s$ starting from $r$.}
\end{proof}

This does not imply that $\tilde\G$ does not always have binary minimizers;
intuitively, while nonbinary functions $v$ may have smaller costs in the domain interior,
one has to pay some extra cost for the transition from binary on $\partial\Omega\times[0,M]$ to nonbinary in $\Omega\times[0,M]$.
The next section and the numerical experiments provide evidence that binary minimizers exist at least in many relevant cases,
as is also believed for the generalized Mumford--Shah setting.

The last remaining inequality in \eqref{eqn:mainInequalities} bounds the convex saddle point problem $\inf_{v\in\C}\sup_{\phi\in\tilde\K}\int_{\overline\Omega\times[0,M]}\phi\cdot\d Dv$ below
by the corresponding primal optimization problem in the vector field $\phi$ (note that in \eqref{eqn:mainInequalities} we did not reduce $\K$ to $\tilde\K$ for simplicity of exposition).
To have an equality we thus need to show strong duality.

\begin{theorem}[Strong duality for convex cost]\label{thm:strongDuality}
	Let $\mu_+,\mu_-,\C,M,\tilde\K$ as in \cref{def:convexCost}. $\tilde\G$ has a minimizer, and we have the strong duality
	\begin{gather*}
	\min_{v\in\C}\tilde\G(v)
	=\min_{v\in\C} \sup_{\phi\in\tilde\K} \int_{\overline\Omega\times[0,M]} \phi\cdot\d Dv
	=\sup_{\phi\in\tilde\K} \inf_{v\in\C} \int_{\overline\Omega\times[0,M]} \phi\cdot\d Dv
	= \sup_{\phi\in\tilde\K\cap C^1(\overline\Omega\times[0,M];\R^2\times\R)}\D(\phi)\\
	\text{for }\D(\phi)=\int_{\partial(\Omega\times(0,M))}1_{u(\mu_+,\mu_-)}\phi\cdot n\,\d\hd^2-\int_{\Omega\times(0,M)}\max\{0,\dive\phi\}\,\d x\,\d s.
	\end{gather*}
\end{theorem}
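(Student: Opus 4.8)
My plan is to treat the four assertions separately: existence of a primal minimizer, the definitional first equality, the identification of the inner infimum with $\D$ (third equality), and the absence of a duality gap (second equality). The first equality is immediate from \cref{def:convexCost}, so I focus on the other three.

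For existence I would use the direct method, where the only nontrivial point is coercivity. Recall from the proof of \cref{thm:propertiesConvex} that $\tilde\G(v)=\infty$ unless $v$ is monotonically nonincreasing in $s$; for such $v$ the third component of $Dv$ is a nonpositive measure of mass at most $\leb^2(\Omega)$, since $v\in[0,1]$ and the total decrease of $s\mapsto v(x,s)$ over $[0,M]$ is at most $1$ for each $x$. For the horizontal part I would exploit that, $\tau$ being concave with $\tau(0)=0$, the ratio $\tau(\ell)/\ell$ is nonincreasing, so \emph{every} field $(\phi^x,0)$ with $|\phi^x|\le\tau(M)/M$ pointwise lies in $\tilde\K$: the pointwise bound $\tau(M)/M\le\tau'(0)$ holds, and $\frac{\tau(M)}{M}(s_2-s_1)\le\tau(s_2-s_1)$ gives the integral constraint. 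Taking the supremum over such fields yields the coercivity estimate $\tilde\G(v)\ge\frac{\tau(M)}{M}\,|D_xv|(\overline\Omega\times[0,M])$, where $D_xv$ denotes the horizontal part of $Dv$. Here the restriction of the image range to $[0,M]$ from \cref{thm:equivalenceImageSurface} is essential, since it caps jump heights at $M$ so that the sublinear cost never drops below the linear rate $\tau(M)/M$. Together with the bound on the $s$-derivative and the fixed boundary data, minimizing sequences are bounded in $\BV(V\times\R)$, hence weakly-* precompact, and the weak-* lower semicontinuity from \cref{thm:propertiesConvex} produces a minimizer.

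For the third equality I would, for $\phi\in\tilde\K\cap C^1$, integrate by parts in $\int_{\overline\Omega\times[0,M]}\phi\cdot\d Dv$. Since every $v\in\C$ agrees with $1_{u(\mu_+,\mu_-)}$ off $\overline\Omega\times[0,M]$, the boundary contribution equals the fixed term $\int_{\partial(\Omega\times(0,M))}1_{u(\mu_+,\mu_-)}\phi\cdot n\,\d\hd^2$, while the interior term is $-\int_{\Omega\times(0,M)}(\dive\phi)\,v$. Minimizing the latter pointwise over $v(x,s)\in[0,1]$ selects $v=1$ where $\dive\phi>0$ and $v=0$ where $\dive\phi<0$, giving $-\int_{\Omega\times(0,M)}\max\{0,\dive\phi\}$; the pointwise optimizer is approximated in $L^1$ by admissible $\BV$ competitors carrying the correct boundary values (boundary modifications are negligible for the volume integral), so $\inf_{v\in\C}\int\phi\cdot\d Dv=\D(\phi)$. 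To pass from $\tilde\K\cap C^1$ to all of $\tilde\K$ I would mollify a general $\phi\in\tilde\K$: its defining constraints are convex and hence preserved under convolution by Jensen's inequality, so the supremum is unchanged. This identifies the middle quantity $\sup_{\phi\in\tilde\K}\inf_{v\in\C}\int\phi\cdot\d Dv$ with $\sup_{\phi\in\tilde\K\cap C^1}\D(\phi)$.

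The remaining and genuinely hard step is the absence of a duality gap, $\min_{v\in\C}\sup_{\phi\in\tilde\K}\int\phi\cdot\d Dv=\sup_{\phi\in\tilde\K}\inf_{v\in\C}\int\phi\cdot\d Dv$ (weak duality being trivial). I would obtain it from Fenchel--Rockafellar duality applied to $\inf_v\{\iota_\C(v)+F(Dv)\}$ with $F$ the support function of $\tilde\K$, $A=D$, and $A^*=-\dive$; then $F^*=\iota_{\tilde\K}$, the conjugate of $\iota_\C$ reproduces the boundary and clipping terms of the previous paragraph, and the dual problem is exactly $\sup_{\phi\in\tilde\K}\D(\phi)$, forcing the three middle terms to coincide. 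I expect the main obstacle to be the constraint qualification that rules out a gap in this non-reflexive, infinite-dimensional setting: $\tilde\K$ is unbounded (the component $\phi^s\ge0$ is unconstrained above, and when $\tau'(0)=\infty$ the component $\phi^x$ is pointwise unbounded), so $F$ is finite only on measures whose horizontal part carries no $s$-atoms and whose $s$-part is nonpositive, and compactness-based minimax theorems such as Sion's do not apply directly. I would verify the qualification by exhibiting an interior competitor $v_0\in\C$, for instance $1_{u_0}$ for a Lipschitz monotone subgraph $u_0\in\A_u$, whose derivative $Dv_0\restr(\overline\Omega\times[0,M])$ avoids these degenerate directions. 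The delicate regime is $\tau'(0)=\infty$, where $\tilde\K$ is worst-behaved and the fixed, possibly jumpy data $1_{u(\mu_+,\mu_-)}$ forces horizontal sheets on $\partial\Omega\times[0,M]$; there I would fall back on the approximation from \cref{thm:equivalenceImageSurface}, proving strong duality for the truncations $\tau_n(m)=\min\{\tau(m),\alpha_n m\}$ (for which $\phi^x$ is bounded and the qualification is clean) and passing to the limit $\alpha_n\uparrow\tau'(0)$ by monotone convergence of the primal and dual objectives. Once strong duality and the identification of $\D$ are established, the minimizer from the first paragraph closes the chain of equalities.
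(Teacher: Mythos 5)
Your treatment of the existence of a minimizer (direct method, with the coercivity estimate $\tilde\G(v)\ge\tfrac{\tau(M)}{M}|D_xv|(\overline\Omega\times[0,M])$ coming from the constant-in-$s$ test fields, plus the mass bound on the $s$-derivative for monotone $v$) is correct, and your identification of the inner infimum with $\D(\phi)$ for $\phi\in\tilde\K\cap C^1$ via integration by parts and pointwise optimization of $v$ is essentially the paper's argument. The genuine gap is in the one step you yourself flag as the hard one: the absence of a duality gap. Your plan is to apply Fenchel--Rockafellar to the primal problem $\inf_v\{\iota_\C(v)+F(Dv)\}$ with $F$ the support function of $\tilde\K$ and to verify the constraint qualification by ``exhibiting an interior competitor $v_0\in\C$ whose derivative avoids the degenerate directions.'' This cannot work, for a structural reason: the qualification requires $F$ to be bounded above on a \emph{neighbourhood} of $Dv_0$ (equivalently, the value function $p\mapsto\inf_{v\in\C}F(Dv+p)$ to be bounded above near $p=0$), not merely finite at $Dv_0$. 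But every $\M$-norm neighbourhood of every measure contains measures whose third component has a positive part --- e.g.\ $Dv_0+\varepsilon(0,0,\delta_{(x_0,s_0)})$ --- and on such measures $F=+\infty$, because $\phi^s$ is unbounded above in $\tilde\K$ (a tall thin bump in $\phi^s$ sends the pairing to $+\infty$). Moreover no competitor $v$ can absorb such an atomic perturbation, since derivatives of $\BV$ functions on $\R^3$ vanish on $\hd^2$-null sets and hence carry no atoms; so the value function is $+\infty$ arbitrarily close to $0$. Thus $F$ is nowhere continuous and the qualification fails for \emph{every} choice of $v_0$. Note also that this obstruction comes from the $\phi^s\ge0$ direction alone, so it persists for the truncated costs $\tau_n$ with $\tau_n'(0)<\infty$; your fallback claim that ``for the truncations the qualification is clean'' is therefore false, and the monotone-convergence limit has nothing to start from.

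The paper escapes exactly this trap by dualizing in the opposite direction: it takes the \emph{smooth} $\phi$-problem as the primal, writing $\sup_{\phi}-\iota_{\tilde\K}(\phi)-G(A\phi)$ with $A\phi=(\dive\phi,\phi)$ on $\XX=C^1$ and $G(\psi_1,\psi_2)=\int_{\Omega\times(0,M)}\max\{0,\psi_1\}\,\d x\,\d s-\int_{\partial(\Omega\times(0,M))}1_{u(\mu_+,\mu_-)}\psi_2\cdot n\,\d\hd^2$, which is finite and continuous on all of $C^0\times C^0$; the qualification $0\in\mathrm{int}(\mathrm{dom}\,G-A\,\mathrm{dom}\,F)$ is then trivial. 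Rockafellar--Fenchel duality then gives both the equality and \emph{attainment on the measure side}, and the remaining (and substantial) work --- entirely absent from your proposal --- is to identify the dual variable: one computes $G^*$, finds that the dual problem is a minimization over densities $w_1\in L^1(\Omega\times(0,M);[0,1])$, and then shows via test fields $(0,0,\lambda\zeta)$ that finiteness forces $w_1$ to be nonincreasing in $s$, and via a total-variation estimate that $w_1\in\BV$, so that $w_1$ can be glued to the boundary data and identified with some $v\in\C$, recovering $\min_{v\in\C}\sup_{\phi\in\tilde\K}\int\phi\cdot\d Dv$. As a byproduct this dual attainment already yields the existence of the minimizer of $\tilde\G$, so the paper needs no separate direct-method argument. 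In short: your decomposition of the statement is reasonable and two of the three pieces are sound, but the central piece rests on a constraint qualification that provably fails in the direction you dualize, and repairing it requires the paper's reversed duality together with the $w_1$-identification step.
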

\begin{proof}
The last equality is obtained via the integration by parts
\begin{equation*}
\int_{\overline\Omega\times[0,M]} \phi\cdot\d Dv
=\int_{\partial(\Omega\times(0,M))}v\phi\cdot n\,\d\hd^2-\int_{\Omega\times(0,M)}v\dive\phi\,\d x\,\d s,
\end{equation*}
noticing $v=1_{u(\mu_+,\mu_-)}$ on $\partial\Omega\times[0,M]$ and taking in $\Omega\times(0,M)$ the maximizing $v=1$ if $\dive\phi>0$ and $0$ else
(we also exploited denseness of $C^1$ in $C^0$).
As for the first equality, the strong duality, we define $\XX=C^1(\overline\Omega\times[0,M];\R^2\times\R)$, $\Y=C^0(\overline\Omega\times[0,M])\times C^0(\overline\Omega\times[0,M];\R^2\times\R)$, as well as
\begin{align*}
A:&\XX\to\Y,&A\phi&=(\dive\phi,\phi),\\
F:&\XX\to[0,\infty],&F(\phi)&=\iota_{\tilde\K}(\phi),\\
G:&\Y\to\R,&G(\psi_1,\psi_2)&=\int_{\Omega\times(0,M)}\max\{0,\psi_1\}\,\d x\,\d s-\int_{\partial(\Omega\times(0,M))}1_{u(\mu_+,\mu_-)}\psi_2\cdot n\,\d\hd^2.
\end{align*}
$A$ is bounded linear, while $F$ and $G$ are proper convex lower semi-continuous.
Furthermore, $0\in\mathrm{int}\left(\mathrm{dom}\,G-A\,\mathrm{dom}\,F\right)$ (since $\mathrm{dom}\,G=\Y$)
so that by the Rockafellar--Fenchel duality theorem \cite[Thm.\,4.4.3]{BoZh2005} we have the strong duality
\begin{equation*}
\sup_{\phi\in\XX}-F(\phi)-G(A\phi)
=\inf_{w\in\Y'}F^*(A^*w)+G^*(-w),
\end{equation*}
and a minimizer $w$ of the right-hand side exists unless the above equals $-\infty$
($\Y'=\M(\overline\Omega\times[0,M])\times\M(\overline\Omega\times[0,M];\R^2\times\R)$ here denotes the dual space to $\Y$, and $F^*,G^*$ denote the convex conjugates of $F$ and $G$).
As calculated before, the left-hand side equals $\sup_{\phi\in\tilde\K} \inf_{v\in\C} \int_{\overline\Omega\times[0,M]} \phi\cdot\d Dv$,
so it remains to show $\inf_{w\in\Y'}F^*(A^*w)+G^*(-w)=\inf_{v\in\C} \sup_{\phi\in\tilde\K} \int_{\overline\Omega\times[0,M]} \phi\cdot\d Dv$.
We have
	\begin{align*}
	F^*(A^*w) 
	&= \sup_{\phi\in\XX} \langle\phi,A^*w\rangle - \iota_{\tilde\K}(\phi) = \sup_{\phi\in\XX} \langle A\phi,w\rangle - \iota_{\tilde\K}(\phi)
	= \sup_{\phi\in\tilde\K\cap\XX} \int_{\dom} \dive\phi \, \d w_1 + \int_{\dom}\phi\cdot\d w_2, \\
	G^*(w) &= \notinclude{G^*(w_1,w_2) = }\iota_{\mathcal{S}_1}(w_1) + \iota_{\mathcal{S}_2}(w_2)
	\end{align*} 
	with the sets
	\begin{align*}
	\mathcal{S}_1 &= \{ \mu\in\M(\dom) \ | \ \mu\ll\leb^3, \ 0\leq\mu\leq 1\}, \\
	\mathcal{S}_2 &= \{ -1_{u(\mu_+,\mu_-)}n \,\hd^2\restr\partial(\Omega\times(0,M))\}.
	\end{align*}
	Thus, $F^*(A^*w)+G^*(-w)\geq0$ for all $w\in\Y'$ so that the infimum over all $w$ is finite and $\inf_{w\in\Y'}F^*(A^*w)+G^*(-w)=\min_{w\in\Y'}F^*(A^*w)+G^*(-w)$.
	Furthermore we obtain
	\begin{multline*}
	\min_{w\in\Y'}F^*(A^*w)+G^*(-w)
	=\min_{-w\in\mathcal S_1\times\mathcal S_2}\sup_{\phi\in\tilde\K\cap\XX} \int_{\dom} \dive\phi \, \d w_1 + \int_{\dom}\phi\cdot\d w_2\\
	=\min_{w_1\in L^1(\Omega\times(0,M);[0,1])}\sup_{\phi\in\tilde\K\cap\XX}\int_{\partial(\Omega\times(0,M))}1_{u(\mu_+,\mu_-)}\phi\cdot n\,\d\hd^2- \int_{\Omega\times(0,M)} w_1\dive\phi \, \d x\,\d s.
	\end{multline*}
	Now the supremum on the right-hand side is only finite if $w_1$ is nonincreasing in $s$-direction.
	Indeed, finiteness of the supremum implies $\int_{\Omega\times(0,M)}w_1\partial_s\zeta\,\d x\,\d s\geq0$ for all $\zeta\in C_0^\infty(\Omega\times(0,M);[0,\infty))$
	since otherwise $\sup_{\phi\in\tilde\K\cap\XX}- \int_{\Omega\times(0,M)} w_1\dive\phi \, \d x\,\d s\geq\sup_{\lambda>0}-\int_{\Omega\times(0,M)} w_1\dive(0,0,\lambda\zeta)\,\d x\,\d s=\infty$.
	The fundamental lemma of the calculus of variations now implies that $w_1$ is nonincreasing in $s$-direction.
	Therefore, by approximating $w_1$ with its mollifications it is straightforward to see that $\int_{\Omega\times(0,M)}w_1\partial_s\zeta\,\d x\,\d s\leq\leb^2(\Omega)$
	for any $\zeta\in C_0^\infty(\Omega\times(0,M);[-1,1])$.
	As a consequence, we have
	\begin{align*}
	&\sup_{\phi\in\tilde\K\cap\XX}\int_{\partial(\Omega\times(0,M))}1_{u(\mu_+,\mu_-)}\phi\cdot n\,\d\hd^2- \int_{\Omega\times(0,M)} w_1\dive\phi \, \d x\,\d s\\
	&\geq\sup_{\phi\in\tilde\K\cap C_0^\infty(\Omega\times(0,M);\R^3)}- \int_{\Omega\times(0,M)} w_1\dive\phi \, \d x\,\d s\\
	&\geq\sup_{\phi\in\tilde\K\cap C_0^\infty(\Omega\times(0,M);\R^3),\zeta\in C_0^\infty(\Omega\times(0,M);[\frac{\tau(M)}M,\frac{\tau(M)}M])}- \int_{\Omega\times(0,M)} w_1\dive(\phi+(0,0,\zeta)) \, \d x\,\d s-\frac{\tau(M)}M\leb^2(\Omega)\\
	&\geq\sup_{\psi\in C_0^\infty(\Omega\times(0,M);\R^3),|\psi|\leq\frac{\tau(M)}M}- \int_{\Omega\times(0,M)} w_1\dive\psi\, \d x\,\d s-\frac{\tau(M)}M\leb^2(\Omega)\\
	&=\frac{\tau(M)}M|w_1|_{\mathrm{TV}}-\frac{\tau(M)}M\leb^2(\Omega),
	\end{align*}
	where $|\cdot|_{\mathrm{TV}}$ denotes the total variation seminorm.
	Thus the supremum is only finite if $w_1\in\BV(\Omega\times(0,1))$ so that we may write
	\begin{align*}
	&\min_{w\in\Y'}F^*(A^*w)+G^*(-w)\\
	&=\min_{w_1\in\BV(\Omega\times(0,M);[0,1])}\sup_{\phi\in\tilde\K\cap\XX}\int_{\partial(\Omega\times(0,M))}1_{u(\mu_+,\mu_-)}\phi\cdot n\,\d\hd^2- \int_{\Omega\times(0,M)} w_1\dive\phi \, \d x\,\d s\\
	&=\min_{w_1\in\BV(\Omega\times(0,M);[0,1])}\sup_{\phi\in\tilde\K\cap\XX}\int_{\partial(\Omega\times(0,M))}(1_{u(\mu_+,\mu_-)}-w_1)\phi\cdot n\,\d\hd^2+\int_{\Omega\times(0,M)}\phi\cdot\d Dw_1\\
	&=\min_{v\in\C}\sup_{\phi\in\tilde\K\cap\XX}\int_{\overline\Omega\times[0,M]}\phi\cdot\d Dv,
	\end{align*}
	where by density we may replace $\tilde\K\cap\XX$ with $\tilde\K$.
\end{proof} 

\begin{remark}[Predual variables of reduced regularity]
Since the predual objective functional $\D$ as well as the functional $\phi\mapsto\int_{\overline\Omega\times[0,M]}\phi\cdot\d Dv$ for $v\in\C$
are continuous with respect to the norm $\|\phi\|_{L^1}+\|\dive\phi\|_{L^1}$,
all throughout the statement of \cref{thm:strongDuality} the suprema may actually be taken over
\begin{multline*}
\hat\K = \Big\{ \phi=(\phi^x,\phi^s)\in L^1(\Omega\times(0,M);\R^2\times[0,\infty)) \,\Big|\, \dive\phi\in L^1(\Omega\times\R),\,\phi^s(x,s)\geq0,\\
|\phi^x(x,s)|\leq\tau'(0)\ \forall \ (x,s)\in\Omega\times(0,M),\
\textstyle\left|\int_{s_1}^{s_2} \phi^x(x,s)\d s\right| \leq \tau(s_2-s_1) \ \forall \ x\in\Omega, 0\leq s_1<s_2\leq M \Big\}.
\end{multline*}
\end{remark}

\subsection{Calibrations for simple network configurations}\label{sec:calibrations}

Even without knowing equality in \eqref{eqn:mainInequalities} one can make use of this inequality
and use it to prove optimality of given transport networks by providing a so-called calibration
(which is a predual certificate in the language of convex optimization).
In fact, this was the aim of introducing the functional lifting of $J$ in \cite{AlBoDa03}
(the authors even considered a Mumford--Shah inpainting setting as we have it here).
In this section we provide calibrations for two exemplary transport networks,
thereby showing optimality of these network configurations as well as equality in \eqref{eqn:mainInequalities} for these cases.
Throughout we will use the notation of the previous section.

\begin{lemma}[Predual optima]\label{thm:dualOptima}
For any $\phi\in\hat\K$ there exists a divergence-free $\hat\phi\in\hat\K$ with no smaller predual cost $\D$.
Thus, in the predual problem $\sup_{\phi\in\hat\K}\D(\phi)$ one may restrict to divergence-free vector fields $\phi$.
\end{lemma}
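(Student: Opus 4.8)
The plan is to keep the horizontal part $\phi^x$ fixed and to absorb the entire divergence into the vertical part $\phi^s$, exploiting that $\phi^s$ enters $\D$ only through the top and bottom faces of the cylinder $Q=\Omega\times(0,M)$. On the top face $\Omega\times\{M\}$ the Dirichlet weight satisfies $1_{u(\mu_+,\mu_-)}=0$ and thus kills any vertical contribution, whereas on the bottom face $\Omega\times\{0\}$ one has $1_{u(\mu_+,\mu_-)}=1$ with outward normal $(0,0,-1)$, so that \emph{decreasing} $\phi^s$ near $s=0$ can only increase $\D$. Concretely, writing $d=\dive\phi\in L^1(Q)$ and $D(x,s)=\int_0^sd(x,\sigma)\,\d\sigma$, I would set
\[
c(x)=\operatorname*{ess\,inf}_{\sigma\in(0,M)}\big(\phi^s(x,\sigma)-D(x,\sigma)\big),\qquad \hat\phi=(\phi^x,\;\phi^s-D-c),
\]
the correction $-D$ being exactly what cancels the divergence and $-c$ being the largest downward shift still compatible with the sign constraint $\hat\phi^s\ge0$.

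First I would verify $\hat\phi\in\hat\K$ and $\dive\hat\phi=0$. The horizontal part is unchanged, so $|\hat\phi^x|\le\tau'(0)$ and $\big|\int_{s_1}^{s_2}\hat\phi^x\,\d s\big|\le\tau(s_2-s_1)$ hold verbatim; $\hat\phi^s\ge0$ holds by the choice of $c$; and since for a.e.\ $x$ the map $s\mapsto D(x,s)$ is absolutely continuous with $\partial_sD=d$, one gets $\dive\hat\phi=\dive\phi-\partial_sD=0$. A routine integrability check, using $\phi^s\in L^1(Q)$ together with $|D(x,\sigma)|\le\int_0^M|d(x,\tau)|\,\d\tau$ to bound $c$ above and below by $L^1(\Omega)$ functions, confirms $\hat\phi\in L^1(Q;\R^3)$.

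Next I would compare costs. As $\hat\phi$ is divergence-free, $\D(\hat\phi)=\int_{\partial Q}1_{u(\mu_+,\mu_-)}\hat\phi\cdot n\,\d\hd^2$. The decisive simplification is to work with the difference $\zeta=\hat\phi-\phi$, which has vanishing horizontal part and vertical part $-D-c$; hence $\dive\zeta=\partial_s\zeta^s=-d\in L^1$, so for a.e.\ $x$ the section $\zeta^s(x,\cdot)$ is absolutely continuous in $s$ and admits honest pointwise traces at $s=0,M$, while $\zeta\cdot n=0$ on the whole lateral boundary. Pairing face by face — lateral contributes nothing, the top is annihilated by $1_{u(\mu_+,\mu_-)}=0$, and the bottom gives $\int_\Omega(-\zeta^s(x,0))\,\d x=\int_\Omega c(x)\,\d x$ since $D(x,0)=0$ — yields, by linearity of the normal-trace pairing,
\[
\D(\hat\phi)-\D(\phi)=\int_\Omega c(x)\,\d x+\int_Q\max\{0,d\}\,\d x\,\d s.
\]
Finally, from $\phi^s\ge0$ and $D(x,s)\le\int_0^M\max\{0,d(x,\sigma)\}\,\d\sigma$ for every $s$ I obtain $c(x)\ge-\int_0^M\max\{0,d(x,\sigma)\}\,\d\sigma$, and integrating over $\Omega$ gives $\int_\Omega c\,\d x\ge-\int_Q\max\{0,d\}$, whence $\D(\hat\phi)\ge\D(\phi)$.

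The main obstacle is the low regularity inherent to $\hat\K$: a general $\phi$ is only $L^1$ with $L^1$ divergence, so the boundary integrals are a priori mere normal-trace pairings and $\phi^s(\cdot,0)$ is not a classical trace. Passing to the purely vertical difference $\zeta$ is precisely what circumvents this, since $\dive\zeta=\partial_s\zeta^s$ forces $s$-wise absolute continuity (hence genuine top/bottom traces) and simultaneously annihilates the delicate lateral contribution; the remaining care concerns measurability and $L^1$-integrability of $c$ and the fact that the essential infimum, rather than a pointwise infimum, is the correct object because $\phi^s(x,\cdot)$ is only $L^1$ in $s$.
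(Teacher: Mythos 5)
Your proof is correct, but it takes a genuinely different route from the paper's. The paper invokes Smirnov's decomposition theorem: it writes $\phi$ as a superposition $\phi=\int_S\tilde\phi\,\d\rho(\tilde\phi)$ of simple oriented curves, without cancellation of mass or of divergence, and obtains $\hat\phi$ by discarding every curve that begins or ends inside $\Omega\times(0,M)$; what remains is divergence-free, and the boundary contribution of the discarded curves to $\D$ is dominated by the divergence penalty they would have incurred. Your construction is instead explicit and elementary: you leave $\phi^x$ untouched and absorb the divergence into the vertical component, $\hat\phi^s=\phi^s-D-c$, exploiting that $\phi^s$ enters $\hat\K$ only through the sign constraint and enters $\D$ only through the top face (weight $0$) and the bottom face (weight $1$). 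Each approach buys something. Yours needs no deep theorem, makes the membership $\hat\phi\in\hat\K$ essentially trivial --- the delicate constraints of $\hat\K$, namely $|\phi^x|\le\tau'(0)$ and the nonlocal vertical-integral bound, concern only $\phi^x$, which you never modify, whereas after the paper's curve removal these constraints must be re-verified via the no-cancellation property of the decomposition, and the vertical-integral constraint is not entirely immediate there --- and it even quantifies the gain, $\D(\hat\phi)-\D(\phi)=\int_\Omega c\,\d x+\int_{\Omega\times(0,M)}\max\{0,\dive\phi\}\,\d x\,\d s\ge0$. On the other hand, your argument leans heavily on the specific cylindrical geometry and the asymmetric boundary weights ($1$ below, $0$ above), while the Smirnov argument is shorter to state, more geometric, and less tied to this structure. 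Finally, your device of passing to the purely vertical difference $\zeta=\hat\phi-\phi$, whose sections are absolutely continuous in $s$ and which therefore has honest top and bottom traces and vanishing lateral normal trace, is exactly the right way to make the face-by-face computation legitimate for mere $L^1$ fields with $L^1$ divergence, where $\phi$ itself has only weak normal traces.
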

\begin{proof}
Let $\phi\in\hat\K$. 
By Smirnov's decomposition theorem \cite[Thm.\,B-C]{Sm93} there exists a set $S$ of simple oriented curves of finite length
(that is, measures of the form $\pushforward\gamma{\dot\gamma\hd^1\restr[0,1]}$ for $\gamma:[0,1]\to\overline\Omega\times[0,M]$ injective and Lipschitz,
where $\pushforward f\mu$ denotes the pushforward of a measure $\mu$ under a map $f$)
as well as a nonnegative measure $\rho$ on $S$ such that
\begin{equation*}
\phi=\int_S\tilde\phi\,\d\rho(\tilde\phi),\quad
\|\phi\|_{L^1}=\int_S\|\tilde\phi\|_\M\,\d\rho(\tilde\phi),\quad
\|\dive\phi\|_{\M}=\int_S\|\dive\tilde\phi\|_\M\,\d\rho(\tilde\phi)
\end{equation*}
(the first equation means $\langle\phi,\psi\rangle=\int_S\langle\phi,\psi\rangle\,\d\rho(\tilde\phi)$ for every smooth test vector field $\psi$
and $\langle\cdot,\cdot\rangle$ the duality pairing between Radon measures and continuous functions).
Now consider
\begin{equation*}
\tilde S=\{\pushforward\gamma{\dot\gamma\hd^1\restr[0,1]}\in S\,|\,\gamma(0)\in\Omega\times(0,M)\text{ or }\gamma(1)\in\Omega\times(0,M)\}
\end{equation*}
(which is $\rho$-measurable in the above sense),
then $\hat\phi=\int_{S\setminus\tilde S}\tilde\phi\,\d\rho(\tilde\phi)$ is divergence-free with $\D(\hat\phi)\leq\D(\phi)$ and $\hat\phi\in\hat\K$.
\end{proof}

The previous \namecref{thm:dualOptima} suggests to focus on divergence-free predual certificates, which in this context are called calibrations.

\begin{lemma}\label{lm:EqualityOfMinima}
If there exists a divergence-free predual certificate for $v\in\C$, that is, a vector field $\hat\phi\in\hat\K$ with
\begin{equation*}
\dive\hat\phi=0
\quad\text{and}\quad
\tilde\G(v)=\int_{\overline\Omega\times[0,M]}\hat\phi\cdot\d Dv,
\end{equation*}
then $v$ minimizes $\tilde\G$ on $\C$.
Moreover, $\hat\phi$ is a predual certificate for any minimizer.
In particular, if $v=1_u$ for some $u\in\A_u$ and thus $\E(\flux_u)=\tilde\E(u)=\G(1_u)=\tilde\G(1_u)=\int_{\overline\Omega\times[0,M]}\hat\phi\cdot\d D1_u$,
then $u$ minimizes $\E(\flux_u)=\tilde\E(u)=\G(1_u)$ over $\A_u$ and $\hat\phi$ is called a \emph{calibration} for $u$.
\end{lemma}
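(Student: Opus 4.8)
The plan is to derive this as a standard weak-duality / optimality-certificate argument built on the strong duality of \cref{thm:strongDuality}. The crucial structural fact, already implicit in that proof, is that the predual cost equals the pointwise infimum of the primal pairing, $\D(\phi)=\inf_{v\in\C}\int_{\dom}\phi\cdot\d Dv$ for every $\phi\in\hat\K$; this is exactly what the integration by parts together with the choice of the maximizing $v$ (namely $v=1$ where $\dive\phi>0$ and $v=0$ else) yields. Combining this with the definition of $\tilde\G$ as a supremum over $\tilde\K$ (which by the reduced-regularity remark may be enlarged to $\hat\K$), one obtains the weak-duality chain
\[
\tilde\G(w)=\sup_{\phi\in\hat\K}\int_{\dom}\phi\cdot\d Dw\ \geq\ \int_{\dom}\hat\phi\cdot\d Dw\ \geq\ \inf_{w'\in\C}\int_{\dom}\hat\phi\cdot\d Dw'=\D(\hat\phi)
\]
valid for every $w\in\C$.

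Next I would exploit divergence-freeness to pin down the middle term. Applying the integration-by-parts identity from the proof of \cref{thm:strongDuality},
\[
\int_{\dom}\hat\phi\cdot\d Dw=\int_{\partial(\Omega\times(0,M))}1_{u(\mu_+,\mu_-)}\,\hat\phi\cdot n\,\d\hd^2-\int_{\Omega\times(0,M)}w\,\dive\hat\phi\,\d x\,\d s,
\]
and inserting $\dive\hat\phi=0$, the volume term drops out, so that $\int_{\dom}\hat\phi\cdot\d Dw=\D(\hat\phi)$ \emph{independently of} $w\in\C$ (the boundary trace is fixed to $1_{u(\mu_+,\mu_-)}$ for all admissible $w$). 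Feeding this back into the weak-duality chain shows $\tilde\G(w)\geq\D(\hat\phi)$ for every $w\in\C$. The hypothesis $\tilde\G(v)=\int_{\dom}\hat\phi\cdot\d Dv$ then reads $\tilde\G(v)=\D(\hat\phi)$, so $v$ attains the universal lower bound $\D(\hat\phi)$ and is therefore a minimizer of $\tilde\G$ on $\C$. Moreover, any minimizer $w$ satisfies $\tilde\G(w)=\D(\hat\phi)=\int_{\dom}\hat\phi\cdot\d Dw$, which is precisely the statement that $\hat\phi$ is a predual certificate for $w$.

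Finally, for the ``in particular'' assertion I would specialize to $v=1_u$ with $u\in\A_u$. By \cref{thm:fluxImage,thm:equivalenceImageSurface} and \cref{def:convexCost} one has $\E(\flux_u)=\tilde\E(u)=\G(1_u)=\tilde\G(1_u)$, and since $1_u$ minimizes $\tilde\G$ over $\C$ while by the first claim of \cref{thm:propertiesConvex} we have $\inf_{v\in\C}\tilde\G(v)\leq\min_{u'\in\A_u}\G(1_{u'})$, sandwiching gives $\G(1_u)=\min_{u'\in\A_u}\G(1_{u'})$; hence $u$ minimizes $\E(\flux_u)=\tilde\E(u)=\G(1_u)$ over $\A_u$. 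The argument is short, and I expect the only genuinely delicate point to be the integration-by-parts step: one must verify that the boundary integral is taken against the \emph{fixed outer trace} $1_{u(\mu_+,\mu_-)}$ rather than an interior trace of $w$, which is exactly what makes $\int_{\dom}\hat\phi\cdot\d Dw$ independent of $w$ once $\dive\hat\phi=0$. Everything else is weak duality plus the already-established chain of equalities.
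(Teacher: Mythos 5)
Your proof is correct and follows essentially the same route as the paper's: weak duality $\tilde\G(w)\geq\D(\hat\phi)$ for all $w\in\C$, combined with the integration-by-parts identity which, once $\dive\hat\phi=0$, makes $\int_{\overline\Omega\times[0,M]}\hat\phi\cdot\d Dw=\D(\hat\phi)$ independent of $w\in\C$ (the paper phrases this as equality in weak duality characterizing optimality, and reuses the same calculation for any other minimizer, exactly as you do). Your explicit sandwich argument for the ``in particular'' part via \cref{thm:propertiesConvex} is a correct spelling-out of what the paper leaves implicit.
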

\begin{proof}
By weak duality from \cref{thm:strongDuality} we have $\tilde\G(v)\geq\D(\hat\phi)$ with equality if and only if $v$ and $\hat\phi$ are optimal.
However, since $\hat\phi$ is divergence-free we have
\begin{equation*}
\D(\hat\phi)
=\int_{\partial(\Omega\times(0,M))}1_{u(\mu_+,\mu_-)}\phi\cdot n\,\d\hd^2-\int_{\Omega\times(0,M)}v\dive\phi\,\d x\,\d s
=\int_{\overline\Omega\times[0,M]}\hat\phi\cdot\d Dv
=\tilde\G(v)
\end{equation*}
after an integration by parts, thus $v\in\C$ is minimizing and $\hat\phi\in\hat\K$ is maximizing.
Now any other minimizer $\tilde v\in\C$ satisfies $\tilde\G(\tilde v)=\tilde\G(v)=\D(\hat\phi)=\int_{\overline\Omega\times[0,M]}\hat\phi\cdot\d D\tilde v$ by the same calculation
so that $\hat\phi$ also calibrates $\tilde v$.
\end{proof}

\begin{remark}[Sequences as calibrations and less regularity]\label{rem:calibrationSequence}
By an obvious modification of the above argument, the existence of the divergence-free $\hat\phi\in\hat\K$ can be replaced
by the existence of a sequence $\phi_1,\phi_2,\ldots\in\hat\K$ of divergence-free vector fields with $\tilde\G(v)=\lim_{n\to\infty}\int_{\overline\Omega\times[0,M]}\phi_n\cdot\d Dv$.
\end{remark}

In the remainder of the section we provide two examples for calibrations,
one for a classic network configuration that can be and has been analysed classically on the level of graphs,
and one that cannot be analysed on such a basis.
We begin by proving the angle conditions for triple junctions, which, as mentioned above, can also easily be obtained by a vertex perturbation argument.
Any triple junction can locally be interpreted as having a single source point and two sink points (or vice versa), which we do below.

\begin{example}[Triple junction]\label{exm:tripleJunction}
Let a point source and two point sinks be located on the boundary of the unit disk $\Omega$,
\begin{equation*}
\mu_+=(m_1+m_2)\delta_{-e_0},\quad
\mu_-=m_1\delta_{e_1}+m_2\delta_{e_2}
\quad\text{for }e_0,e_1,e_2\in\partial\Omega=S^1,\,m_1,m_2>0,
\end{equation*}
where the vectors $e_0,e_1,e_2$ satisfy the angle condition
\begin{equation*}
0=\tau(m_1)e_1+\tau(m_2)e_2-\tau(m_1+m_2)e_0.
\end{equation*}
Then the mass flux $\flux=(m_1+m_2)e_0\hd^1\restr e_0+m_1e_1\hd^1\restr e_1+m_2e_2\hd^1\restr e_2$ minimizes $\E$ on $\A_\flux$.

To prove this statement, assume without loss of generality that $e_i=(\cos\varphi_i,\sin\varphi_i)$, $i=0,1,2$, with $0\leq\varphi_1\leq\varphi_0\leq\varphi_2\leq2\pi$ (see \cref{fig:tripleJunction}).
Then $u_\flux$ reads
\begin{equation*}
u_\flux(r,\varphi)=\begin{cases}
0&\text{if }\varphi\in[\varphi_2,2\pi-\varphi_0),\\
m_2&\text{if }\varphi\in[\varphi_1,\varphi_2),\\
m_1+m_2&\text{else}
\end{cases}
\end{equation*}
in polar coordinates, and its maximum is $M=m_1+m_2$. Now set
\begin{equation*}
\phi(x,s)=\begin{cases}
-\frac{\tau(m_2)}{m_2}(e_2^\perp,0)&\text{if }0\leq s\leq m_2,\\
-\frac{\tau(m_1)}{m_1}(e_1^\perp,0)&\text{if }m_2\leq s\leq M,
\end{cases}
\end{equation*}
where $\perp$ denotes counterclockwise rotation by $\pi/2$.
With this choice we have
\begin{equation*}
\int_{\overline\Omega\times[0,M]}\phi\cdot\d D1_{u_\flux}
=\tau(m_1)(1+e_0^\perp\cdot e_1^\perp)+\tau(m_2)(1+e_0^\perp\cdot e_2^\perp)
=\tau(m_1)+\tau(m_2)+\tau(m_1+m_2)
=\E(\flux),
\end{equation*}
where in the second equality we used the (inner product with $e_0$ of the) angle condition.
Furthermore, $\dive\phi=0$, and we have $\left|\int_{s_1}^{s_2}\phi^x(x,s)\,\d s\right|\leq\tau(s_2-s_1)$ for all $0\leq s_1\leq s_2\leq M$,
Indeed, for $s_2\leq m_2$ or $s_1\geq m_2$ this is trivial to check, and for $s_1\leq m_2\leq s_2$ we set $\alpha=\min\{\tfrac{m_2-s_1}{m_2},\tfrac{s_2-m_2}{m_1}\}$ and calculate
\begin{align*}
\left|\int_{s_1}^{s_2}\phi^x(x,s)\,\d s\right|
&=\left|(m_2-s_1)\tfrac{\tau(m_2)}{m_2}e_2+(s_2-m_2)\tfrac{\tau(m_1)}{m_1}e_1\right|\\
&=\left|\alpha(\tau(m_2)e_2+\tau(m_1)e_1)+(\tfrac{m_2-s_1}{m_2}-\alpha)\tau(m_2)e_2+(\tfrac{s_2-m_2}{m_1}-\alpha)\tau(m_1)e_1\right|\\
&\leq\alpha\tau(m_1+m_2)+(\tfrac{m_2-s_1}{m_2}-\alpha)\tau(m_2)+(\tfrac{s_2-m_2}{m_1}-\alpha)\tau(m_1)+(1+\alpha-\tfrac{m_2-s_1}{m_2}-\tfrac{s_2-m_2}{m_1})\tau(0)\\
&\leq\tau(\alpha(m_1+m_2)+(\tfrac{m_2-s_1}{m_2}-\alpha)m_2+(\tfrac{s_2-m_2}{m_1}-\alpha)m_1+(1+\alpha-\tfrac{m_2-s_1}{m_2}-\tfrac{s_2-m_2}{m_1})0)\\
&=\tau(s_2-s_1),
\end{align*}
where in the first inequality we used the triangle inequality and the angle condition
and in the last inequality we used Jensen's inequality
with convex combination coefficients $\alpha,(\tfrac{m_2-s_1}{m_2}-\alpha),\tfrac{s_2-m_2}{m_1}-\alpha),(1+\alpha-\tfrac{m_2-s_1}{m_2}-\tfrac{s_2-m_2}{m_1})$.
Thus, $\phi\in\hat\K$ as desired.
\end{example}

\begin{figure}
\centering
\setlength\unitlength{.07\linewidth}
\begin{picture}(2,2)
\put(1,1){\scalebox{1.5}{\circle{5}}}
\put(1,1){\vector(1,1){.7}}
\put(1,1){\vector(-1,1){.7}}
\put(1,0){\vector(0,1){1}}
\put(1.1,.5){\small$e_0$}
\put(1.4,1.2){\small$e_1$}
\put(.7,1.4){\small$e_2$}
\put(0,0){\small$\Omega$}
\put(.9,-.2){\small$+$}
\put(.1,1.75){\small$-$}
\put(1.7,1.75){\small$-$}
\end{picture}
\caption{Illustration of the notation in \cref{exm:tripleJunction}.}
\label{fig:tripleJunction}
\end{figure}
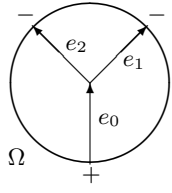

The second example shows that even for strictly concave transportation cost $\tau$ one may have a diffuse flux without network formation.

\begin{example}[Diffuse flux]
Let the source and the sink be two line measures opposite of each other, that is
\begin{equation*}
\mu_+=m\hd^1\restr[0,\ell]\times\{0\},\quad
\mu_-=m\hd^1\restr[0,\ell]\times\{d\},\quad
\text{for some }m,d,\ell>0\text{ and }\Omega=(0,\ell)\times(0,d).
\end{equation*}
By rescaling space and the transportation cost we may reduce the setting to the equivalent one with $m=d=1$ without loss of generality.
If the transportation cost $\tau$ satisfies
\begin{equation*}
\frac{\tau(m)}{\tau'(0)}\geq\max\left\{\min\{\tfrac m\beta,\tfrac12\},\sqrt{\beta^2-m^2}\arsinh\tfrac m{\sqrt{\beta^2-m^2}}\right\}
\quad\forall m<\beta
\end{equation*}
for some $\beta\geq1$ (note that necessarily $\tau(m)/\tau'(0)\leq m$), then the optimal flux is given by the diffuse $\flux=(0,1)\leb^2\restr\Omega$.

Note that for $\beta$ large enough, the above bound on $\tau$ simply evaluates to the strictly concave $\sqrt{\beta^2-m^2}\arsinh\tfrac m{\sqrt{\beta^2-m^2}}$;
from then on any larger $\beta$ produces a weaker bound.

To prove the statement note $u_\flux(x)=x_1$ with maximum $M=\ell$ and set
\begin{equation*}
\psi(x)=\begin{cases}
\frac{-x^\perp}{|x|}&\text{if }|x|\leq\frac12,\\
0&\text{else,}
\end{cases}
\qquad
\tilde\psi(x)=\left(\begin{smallmatrix}1&0\\0&1/\beta\end{smallmatrix}\right)\psi\left(\left(\begin{smallmatrix}1/\beta&0\\0&1\end{smallmatrix}\right)x\right),
\qquad
\phi(x,s)=\begin{cases}
(\tilde\psi(x_1-s,x_2),0)&\text{if }x_2\leq\frac12,\\
(\tilde\psi(s-x_1,1-x_2),0)&\text{else}
\end{cases}
\end{equation*}
(note that for each $s$, $\phi$ is symmetric about $x_2=\frac12$, describing an elliptic flow in each half).
It is straightforward to check
\begin{equation*}
\E(\flux)=\ell=\int_{\overline\Omega\times[0,M]}\phi\cdot\d D1_{u_\flux}
\end{equation*}
as well as $\dive\phi=0$.
Furthermore, we need to check the condition $\left|\int_{s_1}^{s_2}\phi^x(0,x_2,s)\,\d s\right|\leq\tau(s_2-s_1)$
for all $-\beta\sqrt{1/4-x_2^2}\leq s_1\leq s_2\leq\beta\sqrt{1/4-x_2^2}$ (outside this range $\phi$ is zero anyway), where due to symmetry it suffices to consider the position $x_1=0$.
We can calculate (without loss of generality for $x_2\leq\frac12$)
\begin{multline*}
\left|\int_{s_1}^{s_2}\phi^x(0,x_2,s)\,\d s\right|
=\left|\left(\begin{smallmatrix}1&0\\0&1/\beta\end{smallmatrix}\right)\int_{s_1}^{s_2}\psi(-s/\beta,x_2)\,\d s\right|
=\left|\int_{s_1}^{s_2}\frac{{x_2\choose s/\beta^2}}{\sqrt{x_2^2+s^2/\beta^2}}\,\d s\right|\\
=\left|x_2{\beta(\arsinh z_2)-\arsinh z_1))\choose\sqrt{1+z_2^2}-\sqrt{1+z_1^2}}\right|
=x_2\sqrt{\beta^2(\arsinh z_2-\arsinh z_1)^2+\left(\sqrt{1+z_2^2}-\sqrt{1+z_1^2}\right)^2}
\end{multline*}
for $z_i=s_i/(x_2\beta)$, $i=1,2$. Let us abbreviate this function by $f(s_1,s_2,x_2,\beta)$.
We need to have $\tau(m)\geq f(s-\frac m2,s+\frac m2,x_2,\beta)$ for any choice of $s$ (which due to symmetry we may assume nonnegative) and $x_2$.
Now it turns out that $f(s-\frac m2,s+\frac m2,x_2,\beta)$ has no critical points as a function of $s$ and $x_2$. Indeed,
\begin{align*}
&\frac\d{\d x_2}f(s-\tfrac m2,s+\tfrac m2,x_2,\beta)\\
&=-x_2\beta^2\frac{(\arsinh z_2-\arsinh z_1)^2-(\arsinh z_2-\arsinh z_1)\left(\frac{z_2}{\sqrt{1+z_2^2}}-\frac{z_1}{\sqrt{1+z_1^2}}\right)+\left(2-\sqrt{\frac{1+z_2^2}{1+z_1^2}}-\sqrt{\frac{1+z_1^2}{1+z_2^2}}\right)/\beta^2}{f(s-\tfrac m2,s+\tfrac m2,x_2,\beta)},\\
&\frac\d{\d s}f(s-\tfrac m2,s+\tfrac m2,x_2,\beta)\\
&=\beta\frac{(\arsinh z_2-\arsinh z_1)\left(\frac1{\sqrt{1+z_2^2}}-\frac1{\sqrt{1+z_1^2}}\right)+(\sqrt{1+z_2^2}-\sqrt{1+z_1^2})\left(\frac{z_2}{\sqrt{1+z_2^2}}-\frac{z_1}{\sqrt{1+z_1^2}}\right)/\beta^2}{f(s-\tfrac m2,s+\tfrac m2,x_2,\beta)},
\end{align*}
and one can check that there are no joint zeros $(z_1,z_2)$ of both expressions.
Consequently, $f(s-\tfrac m2,s+\tfrac m2,x_2,\beta)$ becomes extremal on the boundary of the admissible domain $\{(s,x_2)\in\R\times[0,\infty)\,|\,\frac{(s\pm m/2)^2}{\beta^2}+x_2^2\leq\frac14\}$
(such that $s_2=s+\tfrac m2\leq\beta\sqrt{1/4-x_2^2}$).
One can readily evaluate $f(s-\tfrac m2,s+\tfrac m2,0,\beta)=\frac{||s_2|-|s_1||}\beta\leq\min\{\frac m\beta,\frac12\}$, thus we require $\tau(m)\geq\min\{\frac m\beta,\frac12\}$.
On the other boundary, $x_2=\sqrt{\frac14-\frac{(s\pm m/2)^2}{\beta^2}}$, for symmetry reasons it suffices if we consider $s\geq0$.
It turns out that $f(s-\tfrac m2,s+\tfrac m2,\sqrt{\frac14-\frac{(s+m/2)^2}{\beta^2}},\beta)$ is initially decreasing in $s\in[0,\frac{\beta-m}2]$
and may then again increase, depending on the size of $\beta$ and $m$.
Thus the maximum value is taken either at $s=\frac{\beta-m}2$ (which is the case $x_2=0$ already treated above) or at $s=0$.
Hence, we additionally need $\tau(m)\geq f(-\tfrac m2,\tfrac m2,\sqrt{\frac14-\frac{m^2}{4\beta^2}},\beta)=\sqrt{\beta^2-m^2}\arsinh\tfrac m{\sqrt{\beta^2-m^2}}$
so that $\phi\in\hat\K$ as desired.
\end{example}

\section{Adaptive finite elements for functional lifting problems}\label{sec:numerics}

Convex optimization problems arising from functional lifting as introduced in \cref{sec:FuncLifting} require a careful numerical treatment due to several reasons.
First, the lifted problem has an objective variable living in three rather than two space dimensions,
which requires a careful discretization in order to provide a straightforward translation between the two- and three-dimensional model.
Furthermore, the problem size is strongly increased by the lifting;
not only do the variables live in a higher-dimensional space, but also the set $\hat\K$ has a constraint for every $(x,s_1,s_2)\in\Omega\subset\R^2\times\R\times\R$
so that the problem essentially behaves like a four-dimensional one.
Finally, to make the algorithm reliable and avoid unwanted effects the discretization of the feasible set $\hat\K$ should be feasible itself (that is, a subset of $\hat\K$)
which means that one must be able to reduce the infinite number of non-local constraints in $\hat\K$ to a finite number.

One possible way to jointly tackle the previously mentioned challenges is an adaptive finite element approach defined on grids consisting of prism-shaped elements.
As before, to emphasize the difference between the original image domain and the image range, for a point in $\Omega\times(0,M)$, we denote its first two coordinates as $x$-coordinates and the third one as $s$-coordinate with respect to the standard basis of $\R^3$.

\subsection{Adaptive triangular prism grids}

We start by recalling the definition of a two-dimensional simplicial grid (see for instance \cite{Tr1997}). 

\begin{definition}[Simplex, simplicial grid in 2D]
	A \emph{two-dimensional simplex} $(x^0,x^1,x^2)$ is a 3-tuple with nodes  $x^0,x^1,x^2\in\R^2$, which do not lie on a one-dimensional hyperplane. The convex hull $\text{conv}\{x^0,x^1,x^2\}$ is also denoted as a simplex. A \emph{two-dimensional simplicial grid} on $\overline\Omega$ is a set of two-dimensional simplices with pairwise disjoint interior and union $\overline\Omega$.
\end{definition}

Based on a two-dimensional simplicial grid for the image domain $\Omega$, we define a \emph{lifted counterpart} consisting of triangular prism-shaped elements.
For two tuples $(x^0,\ldots,x^k)$ and $(s^0,\ldots,s^l)$ of points in $\R^n$ and $\R^m$ we will write $(x^0,\ldots,x^k)\times(s^0,\ldots,s^l)$
for the tuple $((x^0,s^0),(x^1,s^0),\ldots(x^k,s^l))$ of points in $\R^n\times\R^m$.

\begin{definition}[Triangular prism element]
A \emph{triangular prism element} $T$ is a 6-tuple $T_x\times T_s$ of nodes in $\R^3$, where $T_x=(x^0,x^1,x^2)$ is a two-dimensional simplex and $T_s=(s^0,s^1)$ for $s^0,s^1\in\R$ with $s^1>s^0$.
If there is no ambiguity, the convex hull of the nodes is also denoted a triangular prism element
(and $T_x$ and $T_s$ are likewise identified with their corresponding convex hulls).
The \emph{vertical} and \emph{horizontal edges} of $T$ are given by $x^i\times(s^0,s^1)$ and $(x^i,x^j)\times s^k$, respectively, for $i,j\in\{0,1,2\}$, $k\in\{0,1\}$.
Similarly one defines its \emph{vertical} and \emph{horizontal faces}.
\end{definition} 

A single triangular prism element can be refined either in the $x$-plane or in the $s$-direction as illustrated in \cref{fig:LocalRefinement},
where we suggest to use the obvious extension of the standard bisection method for a two-dimensional simplicial grid (see for instance \cite{Tr1997}).

\begin{definition}[Element refinement]
The \emph{$s$-refinement} of a triangular prism element $T=(x^0,x^1,x^2)\times(s^0,s^1)$ is the pair of triangular prism elements
\begin{equation*}
(x^0,x^1,x^2)\times(s^0,\tfrac{s^0+s^1}2)\text{ and }(x^0,x^1,x^2)\times(\tfrac{s^0+s^1}2,s^1).
\end{equation*}
Assuming without loss of generality $(x^1,x^2)$ to be the longest edge of $(x^0,x^1,x^2)$,
the \emph{$x$-refinement} of $T$ is the pair of triangular prism elements
\begin{equation*}
(x^0,x^1,\tfrac{x^1+x^2}{2})\times(s^0,s^1)\text{ and }(x^0,\tfrac{x^1+x^2}{2},x^2)\times(s^0,s^1).
\end{equation*}
\end{definition}

\begin{figure}
	\centering
	\setlength\unitlength\textwidth
	\begin{picture}(.5,.2)
	\put(0,0){\includegraphics[height=0.2\unitlength,trim=58 0 221 14,clip]{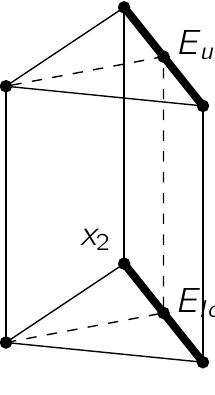}}
	\put(.4,0){\includegraphics[height=0.2\unitlength,trim=218 0 60 14,clip]{LocalRefinement.pdf}}
	\put(.04,.07){\color{white}\rule{1.7ex}{1.25ex}}
	\put(.09,.168){\color{white}\rule{2ex}{1.8ex}}
	\put(.09,.04){\color{white}\rule{1.3ex}{1.6ex}}
	\put(.105,.035){\color{white}\rule{1.3ex}{1.6ex}}
	\put(.44,.07){\color{white}\rule{1.7ex}{1.25ex}}
	\end{picture}
	\caption{Subdivision of a triangular prism element $T$ by $x$-refinement along the longest (bold) horizontal edge (left) and $s$-refinement(right).}
	\label{fig:LocalRefinement}
\end{figure}

We aim for simulations on an adaptively refined grid.
During refinement we want to keep a certain regularity condition of the grid which we call semi-regular.

\begin{definition}[Triangular prism grid and hanging nodes]
A \emph{triangular prism grid} $\T$ on $\overline\Omega\times[0,M]$ is a set of triangular prism elements with pairwise disjoint interior and union $\overline\Omega\times[0,M]$.
Its \emph{set of nodes} $\N(\T)$ is the union of the nodes of all its elements.

A node $N\in\N(\T)$ is called \emph{hanging} if there is an element $T\in\T$ with $N\in T$, but $N$ is not a node of $T$.
It is \emph{$s$-hanging} (or \emph{$x$-hanging}) if for any such $T$ the node $N$ lies on a vertical (or horizontal) edge of $T$.

The grid $\T$ is called \emph{regular} if it does not contain any hanging nodes.
It is called \emph{semi-regular} if it does not contain any $x$-hanging nodes
and if any two elements $T=(x^0,x^1,x^2)\times(s^0,s^1),S=(y^0,y^1,y^2)\times(r^0,r^1)\in\T$ with nonempty intersection 
either exactly share a node, an edge or a face or satisfy either $s^0,s^1\in\{r^0,\frac{r^0+r^1}2,r^1\}$ or $r^0,r^1\in\{s^0,\frac{s^0+s^1}2,s^1\}$.
\end{definition} 

Obviously, in addition to sharing a full edge or face, neighbouring elements in a semi-regular prism grid may also be such
that a vertical edge or face of one may be a vertical half-edge or half-face of the other, as illustrated in \cref{fig:SemiRegularTriangularPrismGrid},
resulting in $s$-hanging nodes.
The limitation of $s$-hanging nodes to one per edge is a natural convention to prevent too many successive hanging nodes, which are typically not associated with any degrees of freedom.
The $x$-refinement only allows bisection of the longest edge which is the standard means to prevent degeneration of the interior element angles.

\begin{figure}
	\centering
	\includegraphics[width=0.8\textwidth]{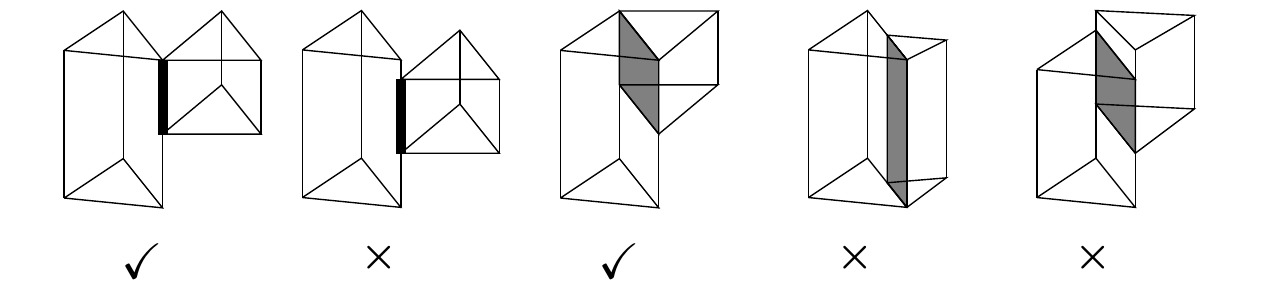}
	\caption{Examples of allowed and forbidden neighbouring relations in a semi-regular triangular prism grid.}
	\label{fig:SemiRegularTriangularPrismGrid}
\end{figure}

The rationale behind concentrating on semi-regular grids is
that these allow a simple discretization of the set of lifting constraints (as will be detailed in \cref{sec:constraintReduction})
and at the same time are sufficiently compatible with local refinement.
Indeed, had we only admitted regular grids, then any $s$-refinement would have to be done globally for all elements in a two-dimensional cross-section of the grid,
while the possibility of $s$-hanging nodes in semi-regular grids allows to subdivide just a few local elements in $s$-direction.
On the other hand, $x$-refinement can be done locally at a position $\hat x$ in the $x$-plane,
but has to be performed simultaneously for all elements along the $s$-coordinate sitting above $\hat x$.
However, this is just global refinement along a one-dimensional direction (rather than the above-mentioned global refinement in a two-dimensional cross-section),
and due to the possibility of local $s$-refinement one practically only has quite few elements along this direction.

A suitable algorithm for grid refinement should preserve the semi-regularity of the grid. Thus, the refinement of one element potentially implies the successive refinement of several neighbouring elements. In case of $x$-refinement, this affects all elements sharing a bisected face or edge with the refined element (that is, the element above and below as well as the neighbour across the subdivided vertical face). In case of $s$-refinement, the \emph{half-edge} rule has to be maintained, such that horizontal neighbours whose height exceeds twice the height of the refined element need to be refined successively. It is a standard fact that the resulting chains of successive element refinements terminate after a finite number of steps.


%

Finally, we note that the projection of a semi-regular triangular prism grid onto the $x$-hyperplane $\R^2\times\{0\}$ naturally yields a two-dimensional simplicial grid by construction, and so does every horizontal slice of the grid.

\subsection{Reduction of the constraint set $\hat\K$}\label{sec:constraintReduction}

Having fixed the grid, we now need to discretize functions on that grid.
We will choose these functions to be piecewise linear in $x$-direction and piecewise constant in $s$-direction (the details are given in \cref{sec:FE}).
In this section we give the reason for that choice: It easily allows to check and project onto the conditions in the convex set $\hat\K$.
A priori, this is very challenging, since for every base point $x\in\Omega$ we have an infinite number of inequality constraints.
Furthermore, after discretization, the inequality constraints for different base points might interdepend on each other in a nontrivial way due to interpolation between different nodal values.
We first show that for functions piecewise constant along the lifting dimension the infinite number of inequality constraints at each base point $x\in\Omega$ reduces to a finite number.
We then prove that if the functions are piecewise linear in $x$-direction, only the constraints for nodal base points have to be checked.

\begin{theorem}[Constraint set for functions piecewise constant in $s$] \label{thm:ConstraintsForConstantFunctions}
Let $0=t_0<t_1<\ldots<t_p=M$ be a partition of $[0,M]$ and let $\psi:[0,M)\to\R^2$ be piecewise constant,
\begin{equation*}
\psi(s)=C_i\text{ if }s\in[t_i,t_{i+1}),\,i=0\ldots p-1.
\end{equation*}
Let $\tau$ be a transportation cost. We have
\begin{equation*}
\left|\int_{s_1}^{s_2}\psi\,\d s\right|\leq\tau(|s_2-s_1|)\,\forall s_1,s_2\in[0,M]
\quad\text{if and only if}\quad
\left|\int_{s_1}^{s_2}\psi\,\d s\right|\leq\tau(|s_2-s_1|)\,\forall s_1,s_2\in\{t_0,\ldots,t_p\}.
\end{equation*}
\end{theorem}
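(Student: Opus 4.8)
The forward (``if'') direction is trivial by restriction, so the plan is to prove the reverse: assuming the bound holds for every pair of partition nodes, deduce it for arbitrary $s_1,s_2\in[0,M]$. Using the symmetry $s_1\leftrightarrow s_2$ in $\tau(|s_2-s_1|)$, and since $\tau(0)=0$ handles the diagonal, it suffices to treat $s_1<s_2$. Abbreviating $I(s_1,s_2)=\int_{s_1}^{s_2}\psi\,\d s$, I would locate the cells $s_1\in[t_i,t_{i+1}]$ and $s_2\in[t_j,t_{j+1}]$ with $i\le j$.

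The key structural observation is that on each product cell $[t_i,t_{i+1}]\times[t_j,t_{j+1}]$ the map $(s_1,s_2)\mapsto I(s_1,s_2)$ is affine: its partial derivatives $\partial_{s_1}I=-C_i$ and $\partial_{s_2}I=C_j$ are constant there, so $I(s_1,s_2)=I(t_i,t_j)-C_i(s_1-t_i)+C_j(s_2-t_j)$. In particular $I$ equals the bilinear interpolant of its four corner values, each of which is a node-pair integral and hence obeys the hypothesis.

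For the main case $i<j$, I would write $(s_1,s_2)$ as the convex combination of the four corners with the standard bilinear weights $\lambda_{kl}\ge0$, $\sum_{kl}\lambda_{kl}=1$. Affineness yields $I(s_1,s_2)=\sum_{kl}\lambda_{kl}I(\text{corner}_{kl})$, so the triangle inequality together with the node bounds gives $|I(s_1,s_2)|\le\sum_{kl}\lambda_{kl}\tau(w_{kl})$, where the $w_{kl}\ge0$ are the corner widths (nonnegativity is where $i<j$ is used, the only point distinguishing the two cases). Since the width $(s_1,s_2)\mapsto s_2-s_1$ is itself affine it is reproduced exactly, $\sum_{kl}\lambda_{kl}w_{kl}=s_2-s_1$, and concavity of $\tau$ via Jensen's inequality gives $\sum_{kl}\lambda_{kl}\tau(w_{kl})\le\tau(s_2-s_1)$, as desired.

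The remaining case $i=j$ (both arguments in a single constant piece, where one corner would have negative width and Jensen does not apply directly) I would dispatch separately: here $I(s_1,s_2)=(s_2-s_1)C_i$, and the node constraint on $[t_i,t_{i+1}]$ gives $|C_i|\le\tau(t_{i+1}-t_i)/(t_{i+1}-t_i)$. Concavity with $\tau(0)=0$ makes $h\mapsto\tau(h)/h$ non-increasing, so $(s_2-s_1)|C_i|\le(s_2-s_1)\tau(t_{i+1}-t_i)/(t_{i+1}-t_i)\le\tau(s_2-s_1)$. I expect the only mild obstacle to be the bookkeeping of corner widths and the isolation of this degenerate same-cell case; the substantive idea is the pairing of affineness (to reduce to the corner constraints) with Jensen's inequality (to recombine the $\tau$-bounds into a single one), both powered by concavity of $\tau$.
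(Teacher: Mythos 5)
Your proposal is correct and follows essentially the same route as the paper's proof: the identical case split (both endpoints in one cell versus in different cells), the same monotonicity-of-$\tau(h)/h$ argument for the same-cell case, and the same reduction of the mixed case to the four corners of the product cell $[t_i,t_{i+1}]\times[t_j,t_{j+1}]$. The only difference is presentational: the paper packages the corner reduction as joint convexity of $(s_1,s_2)\mapsto\bigl|\int_{s_1}^{s_2}\psi\,\d s\bigr|-\tau(s_2-s_1)$ together with the extreme-point principle, while you carry out the same convexity argument by hand via bilinear convex-combination weights, the triangle inequality, and Jensen's inequality.
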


\begin{proof}
We only need to prove one implication (the other being trivial).
Let $|\int_{s_1}^{s_2}\psi\,\d s|\leq\tau(|s_2-s_1|)$ for all $s_1,s_2\in\{t_0,\ldots,t_p\}$.
Now fix arbitrary $s_1,s_2\in[0,M]$, where without loss of generality we have $s_1<s_2$.
If $s_1,s_2\in[t_i,t_{i+1}]$ for some $i\in\{0,\ldots,p\}$, then
\begin{equation*}
\left|\int_{s_1}^{s_2}\psi\,\d s\right|
=(s_2-s_1)|C_i|
=\frac{s_2-s_1}{t_{i+1}-t_i}\left|\int_{t_i}^{t_{i+1}}\psi\,\d s\right|
\leq\frac{s_2-s_1}{t_{i+1}-t_i}\tau(t_{i+1}-t_i)
\leq\tau(s_2-s_1)
\end{equation*}
due to $\tau(0)=0$ and the concavity of $\tau$.
It remains to consider the case $s_1\in[t_i,t_{i+1}]$ and $s_1\in[t_j,t_{j+1}]$ with $i<j$.
To this end consider the function $f:[t_i,t_{i+1}]\times[t_j,t_{j+1}]\to\R$,
\begin{equation*}
f(s_1,s_2)
=\left|\int_{s_1}^{s_2}\psi\,\d s\right|-\tau(s_2-s_1)
=\left|(t_{i+1}-s_1)C_i+\int_{t_{i+1}}^{t_j}\psi\,\d s+(s_2-t_j)C_j\right|-\tau(s_2-s_1).
\end{equation*}
As a composition of a convex with an affine function, $f$ is jointly convex in both arguments.
Therefore, since $f\leq0$ at the four corners (the convex extreme points) of its domain, we have $f\leq0$ all over the domain, which finishes the proof.
\end{proof}

As a consequence, a piecewise constant approximation of the variables in the lifted direction allows an efficient constraint handling.
This feature breaks down already for piecewise linear instead of piecewise constant functions (where it becomes much harder to check the constraints),
as the following simple counterexample illustrates.

\begin{example}[Constraint set for functions piecewise linear in $s$]\label{exm:LinearBasisDoesNotWork}
Let $p\in\mathbb{N}$, $h_s=\frac{M}{p}$, and $t_i=ih_s$ for $i=0,\ldots,p$. Fix an arbitrary $C\in\R^2$ and define $\psi:[0,M]\to\R^2$ as 
\begin{equation*}
\psi(x) = \begin{cases} \frac{2C}{h_s}(s-t_i) - C & \text{ if } s\in [t_i,t_{i+1}],\,i \text{ even, } \\
\frac{2C}{h_s}(t_i-s) + C & \text{ if } s\in [t_i,t_{i+1}],\,i \text{ odd } \end{cases}
\end{equation*}
(see \cref{fig:ProfilesOfPhi}). Then obviously $|\int_{t_i}^{t_{j}} \psi\, \d s| = 0$ for any $i,j\in\{0,\ldots,p\}$,
while for $\tilde{s} = \frac{t_i+t_{i+1}}{2}$ we have
\begin{equation*}\textstyle
\left|\int_{t_i}^{\tilde{s}} \psi\, \d s \right| = \tfrac{h_s}{4}|C|
\end{equation*}
which can be arbitrarily large depending on $C$.
\end{example}

\begin{figure}
    \centering
	\setlength\unitlength\textwidth
	\includegraphics[width=0.5\unitlength]{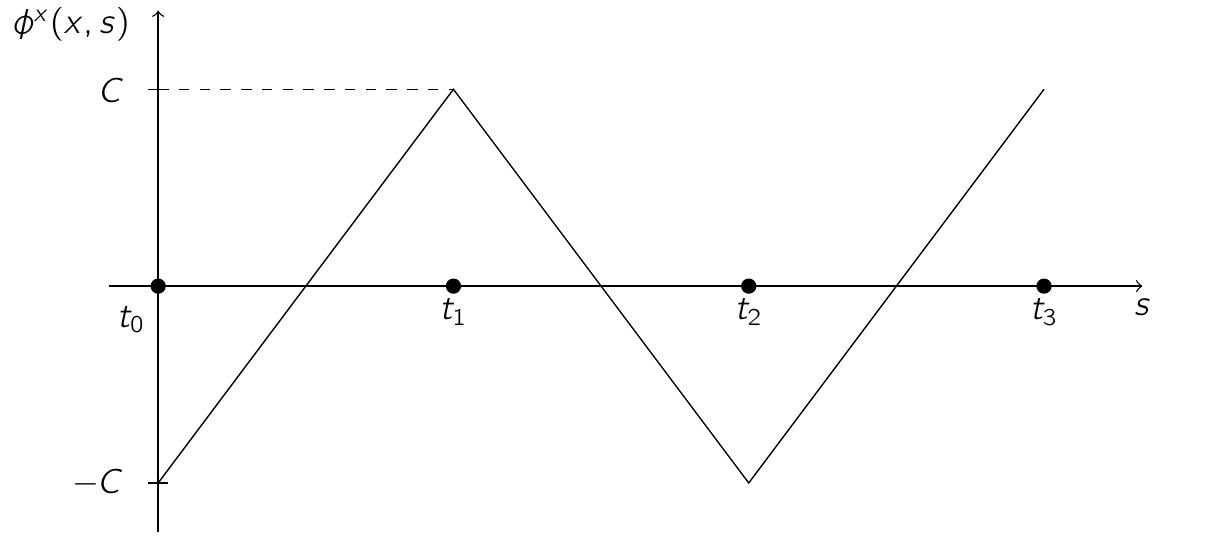}
	\begin{picture}(0,0)
	\put(-.505,.2){\color{white}\rule{5ex}{2.5ex}}
	\put(-.465,.215){\small$\psi$}
	\end{picture}
	\caption{
	Sketch of the function $\psi$ from \cref{exm:LinearBasisDoesNotWork}.}
	\label{fig:ProfilesOfPhi}
\end{figure}

We next state that for piecewise linear discretization in $x$-direction it suffices to consider a finite number of base points.

\begin{theorem}[Constraint set for functions piecewise linear in $x$]\label{thm:pwLinearAnsatz}
Let $\tilde\T$ be a regular two-dimensional simplex grid on $\overline\Omega$ with node set $\N(\tilde\T)$,
and let $\phi:\overline\Omega\times[0,M]\to\R^3$ be piecewise linear in $x$-direction, that is,
for each $s\in[0,M]$ the function $x\mapsto\phi(x,s)$ is continuous and affine on each simplex $T\in\tilde\T$. Then
\begin{equation*}
\left|\int_{s_1}^{s_2}\phi^x(x,s)\,\d s\right|\leq\tau(|s_2-s_1|),\
|\phi^x(x,s_1)|\leq\tau'(0),
\text{ and }\phi^s(x,s_1)\geq0\,\forall s_1,s_2\in[0,M]
\end{equation*}
is satisfied for all $x\in\overline\Omega$ if and only if it is satisfied for all $x\in\N(\tilde\T)$.
\end{theorem}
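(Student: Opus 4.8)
The plan is a direct convexity argument that exploits two features of the problem: the three constraint sets appearing on the right-hand sides (a closed ball of radius $\tau(|s_2-s_1|)$, a closed ball of radius $\tau'(0)$, and the halfspace $\{\cdot\geq0\}$) are convex and independent of $x$, while all the relevant quantities are affine in $x$ on each simplex. Only one implication needs to be shown, the other being trivial since $\N(\tilde\T)\subset\overline\Omega$.

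First I would fix an arbitrary $x\in\overline\Omega$ and choose a simplex $T\in\tilde\T$ containing it, with vertices $x^0,x^1,x^2\in\N(\tilde\T)$. Writing $x=\sum_{i=0}^2\lambda_i x^i$ in barycentric coordinates, with $\lambda_i\geq0$ and $\sum_{i=0}^2\lambda_i=1$, the hypothesised affinity of $x\mapsto\phi(x,s)$ on $T$ for each fixed $s$ yields $\phi(x,s)=\sum_{i=0}^2\lambda_i\phi(x^i,s)$. The coefficients $\lambda_i$ depend only on $x$, not on $s$.

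Next I would treat the three constraints by the same mechanism. For the two pointwise constraints $|\phi^x(x,s_1)|\leq\tau'(0)$ and $\phi^s(x,s_1)\geq0$ I would insert the barycentric expansion at the value $s_1$ and apply the triangle inequality (respectively the nonnegativity of the $\lambda_i$): from $|\phi^x(x^i,s_1)|\leq\tau'(0)$ one gets $|\phi^x(x,s_1)|\leq\sum_i\lambda_i\tau'(0)=\tau'(0)$, and from $\phi^s(x^i,s_1)\geq0$ one gets $\phi^s(x,s_1)\geq0$. For the integral constraint I would set $v(x)=\int_{s_1}^{s_2}\phi^x(x,s)\,\d s$ for fixed $s_1<s_2$; since for each $s$ the map $x\mapsto\phi^x(x,s)$ is affine on $T$ and the $\lambda_i$ are constant in $s$, integration commutes with the finite convex combination, giving $v(x)=\sum_{i=0}^2\lambda_i v(x^i)$. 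The triangle inequality and the nodal bounds then yield $|v(x)|\leq\sum_i\lambda_i|v(x^i)|\leq\sum_i\lambda_i\tau(|s_2-s_1|)=\tau(|s_2-s_1|)$. As $x$, $s_1$, $s_2$ were arbitrary, this completes the argument.

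I do not expect any genuine obstacle here: the statement is essentially the observation that each of the three convex constraints is preserved under convex combinations of nodal values, and that the piecewise-affine ansatz realizes every $x\in\overline\Omega$ as precisely such a convex combination. The only point meriting a word of care is that, for $x$ lying on a shared face of two simplices, the continuity of $\phi$ in $x$ guarantees that $\phi(x,s)$ is well defined, so the choice of the containing simplex $T$ is immaterial.
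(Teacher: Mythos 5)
Your proposal is correct and follows essentially the same route as the paper's proof: both realize $\phi(x,\cdot)$ as the convex combination $\sum_i\lambda_i\phi(x^i,\cdot)$ with $s$-independent barycentric coefficients and then use convexity of the three constraints to transfer them from the nodes to $x$. The only difference is presentational, in that you verify each constraint explicitly where the paper simply invokes their convexity in $\phi(x,\cdot)$.
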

\begin{proof}
Again, one implication is trivial, and we show the other one.
Let the constraints be satisfied for all $x\in\N(\tilde\T)$.
Now pick an arbitrary $x\in\overline\Omega$ and let $T=(x^0,x^1,x^2)\in\tilde\T$ such that $x\in T$
and thus $x=\lambda_0x_0+\lambda_1x_1+\lambda_2x_2$ for convex combination coefficients $\lambda_0,\lambda_1,\lambda_2\in[0,1]$.
Now the function $\phi(x,\cdot)$ can be written as the convex combination $\phi(x,\cdot)=\lambda_0\phi(x_0,\cdot)+\lambda_1\phi(x_1,\cdot)+\lambda_2\phi(x_2,\cdot)$.
Since the constraints are convex in $\phi(x,\cdot)$ and are satisfied for $\phi(x_0,\cdot)$, $\phi(x_1,\cdot)$, and $\phi(x_2,\cdot)$,
they are also satisfied for $\phi(x,\cdot)$.
\end{proof}

Note that the important feature of the piecewise linear discretization in $x$-direction which allows the above constraint reduction
is that the nodal basis of each element is a nonnegative partition of unity and can thus at each point $x\in\overline\Omega$ be viewed as a set of convex combination coefficients.
This feature breaks down for higher order elements.

Summarizing, if the ($x$-component of the) flux $\phi$ is discretized as piecewise constant in $s$-direction and piecewise linear in $x$-direction,
then the constraints forming the set $\hat\K$ only need to be checked at all nodes of the underlying grid.

The above also explains why we aim for semi-regular grids and avoid $x$-hanging nodes:
Otherwise, one would have to test the constraints also for all base points $\hat x$ that correspond to $x$-hanging nodes
(and over these points one would need to consider all $s_1,s_2\in[0,M]$ at which there is an element face, not only those $s_1,s_2\in[0,M]$ for which $(\hat x,s_1)$ and $(\hat x,s_2)$ are nodes).
Furthermore, the projection of a discretized vector field $\phi$ onto the constraint set will be much more complicated:
Without $x$-hanging nodes one can perform the projection independently for all nodes of the underlying two-dimensional simplex grid.
With $x$-hanging nodes, however, the constraints are no longer independent, since the function value at a hanging node is slaved to the function values at the neighbouring non-hanging nodes.


\subsection{Finite element discretization}\label{sec:FE}

We now aim to discretize our convex saddle point problem
\begin{equation}\label{eq:ConvexProblem}
\inf_{v\in\C}\sup_{\phi\in\hat\K}\int_{\overline\Omega\times[0,M]}\phi\cdot\d Dv
\end{equation}
based on a triangular prism finite element approach.
Motivated by \cref{thm:ConstraintsForConstantFunctions,thm:pwLinearAnsatz}, on a semi-regular triangular prism grid $\T$ we define the discrete function spaces
\begin{align*}
S^1(\T)&=\{w\in C^0(\overline\Omega\times[0,M]\,|\,w|_T\text{ is affine}\},\\
S^{0,1}(\T) &= \{ w:\overline\Omega\times[0,M)\rightarrow\R \, |\, w(\cdot,s)\in C^0(\overline\Omega)\text{ for all }s\in[0,M)\\
&\hspace*{7ex}\text{and for all }T\in\T\text{ there are }a,b,c\in\R\text{ with }w|_{T'}(x,s)=ax_1+bx_2+c\},
\end{align*}
where $w|_{T}$ denotes the restriction of $w$ onto $T$ and where $T'$ denotes the triangular prism element without its upper triangular face.
Obviously, on $T=(x^0,x^1,x^2)\times(s^0,s^1)$ any function $w\in S^1(\T)$ is uniquely determined by its values at the element nodes,
while $w|_{T'}$ for $w\in S^{0,1}(\T)$ is uniquely determined by the values of $w$ at the bottom nodes $(x^0,s^0),(x^1,s^0),(x^2,s^0)$.
Consequently, any $w\in S^{1}(\T)$ is uniquely determined by its function values at the set of all except the hanging nodes, which we denote by $\dofnodes(\T)\subset\N(\T)$,
and any $w\in S^{0,1}(\T)$ is uniquely determined by its function values at the set of all except the hanging and the top-most nodes, which we denote by $\dofnodes'(\T)\subset\dofnodes(\T)$.
Numbering the nodes in $\dofnodes(\T)$ and $\dofnodes'(\T)$ as $N_1,\ldots,N_{q'}$ and $N_1,\ldots,N_{q''}$, respectively,
we can thus define a nodal basis $(\theta_1,\ldots,\theta_{q'})$ of $S^{1}(\T)$ and $(\psi_1,\ldots,\psi_{q''})$ of $S^{0,1}(\T)$ via
\begin{equation*}\label{eq:Basis}
\theta_i(P) = \begin{cases} 1 & \text{ if } P=N_i, \\ 0 & \text{ otherwise } \end{cases}
\text{ for all }P\in\dofnodes(\T),\quad
\psi_i(P) = \begin{cases} 1 & \text{ if } P=N_i, \\ 0 & \text{ otherwise } \end{cases} 
\text{ for all }P\in\dofnodes'(\T).
\end{equation*}
We aim for a conformal discretization, that is, our discretized primal and dual variables $v^h,\phi^h$ shall satisfy $v^h\in\C$ and $\phi^h\in\hat\K$.
Therefore we choose $v^h,(\phi^h)^x\in S^{0,1}(\T)$ and $(\phi^h)^s\in S^1(\T)$ so that $v^h$ and $\phi^h$ can be written in terms of basis functions as 
\begin{equation*}
v^h(x,s) = \sum_{k=1}^{q''}V_k\psi_k(x,s),
\quad
\phi^h(x,s) = \left(\sum_{k=1}^{q''}\Phi_k^1\psi_k(x,s),\sum_{k=1}^{q''}\Phi_k^2\psi_k(x,s),\sum_{k=1}^{q'}\Phi_k^s\theta_k(x,s)\right),
\end{equation*}
where we denoted the corresponding vectors of nodal function values by capital letters $V,\Phi^1,\Phi^2\in\R^{q''}$, $\Phi^s\in\R^{q'}$.

\begin{remark}[Handling of top domain boundary]
In the continuous saddle point problem \eqref{eq:ConvexProblem}, the cost functional also includes the integral of the primal and dual function on the top domain boundary $\overline\Omega\times\{M\}$,
however, we chose to define our discrete functions in $S^{0,1}(\T)$ only on $\overline\Omega\times[0,M)$.
This is unproblematic since in \eqref{eq:ConvexProblem} we may replace $\overline\Omega\times[0,M]$ with $\overline\Omega\times[0,M)$ without changing the problem:
Since $v=0$ on $\overline\Omega\times[0,M)$ and thus necessarily $D_xv\restr\overline\Omega\times\{M\}=0$ and $D_sv\restr\overline\Omega\times\{M\}\leq0$, we have
\begin{equation*}
\int_{\overline\Omega\times\{M\}}\phi\cdot\d Dv=\int_{\overline\Omega\times\{M\}}\phi^s\d D_sv\leq0.
\end{equation*}
If this were strictly smaller than zero, then by decreasing $\phi^s$ to zero in a small enough neighbourhood of $\overline\Omega\times\{M\}$
we could increase $\int_{\overline\Omega\times[0,M]}\phi\cdot\d Dv$
so that in the supremum in \eqref{eq:ConvexProblem} we may indeed ignore the contribution from $\overline\Omega\times\{M\}$ without changing its value.

Another way to view this is the observation that $\phi^s(\cdot,M)$ is nothing else but the Lagrange multiplier for the constraint that $v$ must be decreasing in $s$-direction at $s=M$,
which however is automatically fulfilled due to the conditions $v\geq0$ and $v(\cdot,s)=0$.

Note that an alternative would have been to introduce an auxiliary layer of triangular prism elements right above $\overline\Omega\times[0,M)$
so that the discretized functions also have a well-defined value on $\overline\Omega\times\{M\}$.
\end{remark}

\begin{remark}[Approximability of the functional]
If the triangular prism grid is refined one can approximate a continuous function $v$ by discrete functions $v^h$ in the weak-* sense.
Note that for a reasonable approximation of functionals involving $Dv$ (as in our case) this is usually not sufficient;
instead one typically needs $v^h$ to approximate $v$ in the sense of strict convergence (in which additionally $\|Dv^h\|_\M\to\|Dv\|_\M$).
Unfortunately, this is not possible with a piecewise constant discretization,
however, for the special structure of our functional this would be asking a little bit too much.
Indeed, considering for simplicity $v=1_u$, the cost function satisfies
$\G(1_u)=\tilde\E(u)=\lim_{n\to\infty}\tilde\E(u_n)=\lim_{n\to\infty}\G(1_{u_n})$
for some sequence $u_n$ of piecewise constant images (which follows from \cref{def:ContinuousCostFunctionals} of $\E(\flux_u)=\tilde\E(u)$ as the relaxation of the cost for discrete mass fluxes).
Thus, $\G$ can be well approximated even with a discretization that is piecewise constant on a triangular prism grid $\T$ in $s$-direction.
The $x$-derivative of $v$ has to be better resolved, though, in order to be able to correctly account for the lengths of all network branches.
This means we require strict convergence in $x$-direction, $\|D_xv^h\|_\M\to\|D_xv\|_\M$, and this is indeed ensured by our piecewise linear discretization in $x$-direction.
The discretization of the fluxes $\phi$ now is dual to the one of $v$
in the sense that the divergence of $\phi^h$ is also piecewise constant in $s$-direction and piecewise linear in $x$-direction.
Thus it turns out that from the point of view of the underlying functional lifting, the proposed discretization is a quite natural, conformal one.
\end{remark}

Based on this finite element discretization, we can now reformulate the convex saddle point problem \eqref{eq:ConvexProblem} in terms of the coefficient vectors $V,\Phi^1,\Phi^2,\Phi^s$ as
\begin{equation*}\label{eq:DiscreteConvexProblem}
\min_{V\in\C^h(\T)}\max_{(\Phi^1,\Phi^2,\Phi^s)\in\hat\K^h(\T)}V\cdot M^1\Phi^1+V\cdot M^2\Phi^2+V\cdot M^s\Phi^s,
\end{equation*}
where $\C^h(\T)$ and $\hat\K^h(\T)$ are the sets of coefficient vectors corresponding to all functions in $\C\cap S^{0,1}(\T)$ and $\hat\K\cap(S^{0,1}(\T)\times S^{0,1}(\T)\times S^{1}(\T))$, respectively,
and where $M^1,M^2,M^s$ denote the mixed mass-stiffness matrices
\begin{align*}
M^1_{kl} &= \int_{\overline\Omega\times[0,M)}\psi_l\tfrac{\partial\psi_k}{\partial x_1} \, \d x \,\d s,  \ M^2_{kl} = \int_{\overline\Omega\times[0,M)}\psi_l\tfrac{\partial\psi_k}{\partial x_2} \, \d x \,\d s,  \
M^s_{kl} = \int_{\overline\Omega\times[0,M)}\psi_l\tfrac{\partial\theta_k}{\partial s}\, \d x \,\d s.
\end{align*}
In order to explicitly express $\C^h(\T)$ and $\hat\K^h(\T)$ we abbreviate
\begin{equation*}
L_{x,s^1,s^2}
=\left\{(x,s)\in\dofnodes(\T)\,\middle|\,s^1\leq s<s^2\right\}
\end{equation*}
to be the non-hanging nodes with $x$-coordinate $x$ and $s$-coordinate between $s^1$ and $s^2$.
Then we can write
\begin{align*}
\C^h(\T) &= \left\{V\in[0,1]^{q''}\,\middle|\,V_k=1_{u(\mu_+,\mu_-)}(N_k)\text{ for all }k\in\{1,\ldots,q''\}\text{ with }N_k\in\dofnodes'(\T)\cap\partial(\Omega\times(0,M))\right\}, \\
\hat\K^h(\T) &= \left\{\vphantom{\textstyle\sum_{N_k\in L_{x,s^1,s^2}}}(\Phi^1,\Phi^2,\Phi^s)\in(\R^{q''})^2\times\R^{q'}\,\middle| \, \Phi^s_k\geq 0\notinclude{, \ |(\Phi^1_l,\Phi^2_l)|\leq\tau'(0)}\ \forall k=1,\ldots,q',\notinclude{\ l=1,\ldots,q'', }\right.\\
&\textstyle\hspace*{25ex}\left.\left| \sum_{N_k\in L_{x,s^1,s^2}} h(N_k)(\Phi^1_k,\Phi^2_k) \right| \leq \tau(|s^2-s^1|) \ \forall \ (x,s^1),(x,s^2)\in \dofnodes(\T) \right\},
\end{align*}
where $h(N)$ is the distance of $N\in L_{x,s^1,x^2}$ to the next higher node in $L_{x,s^1,x^2}$


\subsection{Optimization algorithm}

We apply an iterative optimization routine that starts on a low-resolution triangular prism grid $\T_0$ on which it solves for the discrete primal and dual variables,
resulting in discrete solutions $v^h_0\in S^{0,1}(\T_0)$ and $\phi^h_0\in S^{0,1}(\T_0)\times S^{0,1}(\T_0)\times S^{1}(\T_0)$.
According to some refinement criterion (to be discussed in \cref{sec:RefinementCriteria}) we then refine several elements of $\T_0$, resulting in a finer grid $\T_1$.
On this finer grid we again solve for the discrete primal and dual variables, resulting in $v^h_1,\phi^h_1$.
We then continue iteratively refining and solving on the grid, thereby producing a hierarchy $\T_0,\T_1,\ldots$ of grids with associated discrete solutions $v^h_k,\phi^h_k$, $k=1,2,\ldots$.

To solve the discrete saddle point problem on a given grid $\T_k$
we apply a standard primal-dual algorithm \cite{ChPo2011} in which we perform the projection onto the convex set $\hat\K^h(\T_k)$ via an iterative Dykstra routine \cite{BoDy1986}.
This projection is the computational bottleneck of the method (in terms of computation time as well as memory requirements),
and it is the main reason for using the tailored adaptive discretization introduced before.
In particular, note that the set of constraints in $\hat\K^h(\T)$ decomposes into subsets of constraints onto which the projection can be performed independently.
In detail, let $x^1,\ldots,x^p\in\R^2$ be the nodes of the two-dimensional simplex grid underlying the triangular prism grid and write
\begin{equation*}
(\Phi^1,\Phi^2)
=((\Phi^1,\Phi^2)_{x^1},\ldots,(\Phi^1,\Phi^2)_{x^p})
\end{equation*}
for $(\Phi^1,\Phi^2)_{x^i}=(\Phi^1_k,\Phi^2_k)_{k\in\hat L_{x,0,M}}$ and $\hat L_{x,s^1,s^2}=\{k\in\{1,\ldots,q''\}\,|\,N_k\in L_{x,s^1,s^1}\}$ the set of node indices belonging to $L_{x,s^1,s^2}$. Then
\begin{equation*}
\hat\K^h(\T)
=\left(\bigtimes_{i=1}^{p}\K_{x_i}\right)\times\K_s
\end{equation*}
for the convex sets
\begin{align*}
\K_x&=\left\{(\Phi^1_k,\Phi^2_k)_{k\in\hat L_{x,0,M}}\,\middle|\,\left|\textstyle\sum_{k\in\hat L_{x,s^1,s^2}} h(N_k)(\Phi^1_k,\Phi^2_k) \right| \leq \tau(|s^2-s^1|) \ \forall \ (x,s^1),(x,s^2)\in L_{x,0,M} \right\},\\
\K_s&=\{\Phi^s\in\R^{q'}\,|\,\Phi^s_k\geq0\,\forall k\}.
\end{align*}
so that one can project onto each $\K_{x^i}$ and $\K_s$ separately
(where the projection onto $\K_s$ is trivial and the projection onto each $\K_{x^i}$ is done via Dykstra's algorithm).
Note that this would change completely in the presence of $x$-hanging nodes.
Here, the set $x^1,\ldots,x^p$ of simplex grid nodes would also have to include the hanging nodes,
and as a consequence $\hat\K^h(\T)$ no longer decomposes into a Cartesion product of constraint sets $\K_{x^i}$
so that the projections can no longer be performed independently.

The overall procedure is presented in pseudocode in \cref{alg:FullFEAlgorithm}, using time steps $\tau,\sigma>0$ and an overrelaxation parameter $\theta$ from \cite{ChPo2011}
(throughout our numerical experiments we use $\theta=1$ as well as $\tau=\sigma=\frac1{L}$ for $L$ the Frobenius norm of the matrix $(M^1,M^2,M^s)$).


\begin{algorithm}
	\caption{Adaptive primal-dual algorithm for generalized branched transport problems}
	\label{alg:FullFEAlgorithm}
	\begin{algorithmic}
		\Function{OptimalTransportNetworkFE}{$u^{\text{start}}$,$\T_0$,$\tau$,$\sigma$,$\theta$,$\text{numRefinements}$} 
		\For{$i=0,\ldots,\text{numRefinements}$}
		\State assemble matrix $M=(M^1,M^2,M^s)$
		\If{$i=0$}
		\State $V^{0,0}=(1_{u^\text{start}}(N_1),\ldots,1_{u^\text{start}}(N_{q''}))$, $\Psi^{0,0}\equiv(\Phi^1,\Phi^2,\Phi^s)^{0,0}=0$
		\Else
		\State prolongate $(V^{i-1,\text{end}},\Psi^{i-1,\text{end}})$ on $\T_{i-1}$ to $(V^{i,0},\Psi^{i,0})$ on $\T_i$
		\EndIf
		\State $k\leftarrow0$
		\While{not converged}
		\State $\tilde\Psi^{i,k+1} = \tilde\Psi^{i,k}+\sigma M^\ast\bar{V}^{i,k}$
		\State compute the projection $\Psi^{i,k+1} = \pi_{\hat\K^h}(\tilde\Psi^{i,k+1})$ via
		\State \hspace*{15ex}separate projections onto sets $\K_{x^i},\K_s$ using Dykstra's algorithm
		\State $\tilde V^{i,k+1} = V^{i,k}-\tau M\Psi^{i,k+1})$
		\State compute the projection $V^{i,k+1} = \pi_{\C^h}(\tilde V^{i,k+1}$
		\State $\bar{V}^{i,k+1} = V^{i,k+1}+\theta(V^{i,k+1}-V^{i,k})$
		\State $k\leftarrow k+1$	
		\EndWhile 
		\If{$i < \text{numRefinements}$}
		\State refine grid $\T_i$ to $\T_{i+1}$
		\Else
		\State $V=V^{i,\text{end}}$, $(\Phi^1,\Phi^2,\Phi^s)=\Psi^{i,\text{end}}$
		\EndIf
		\EndFor
		\EndFunction
		\State \textbf{return} $V,\Phi^1,\Phi^2,\Phi^s$
	\end{algorithmic}
\end{algorithm}

\subsection{Refinement criteria} \label{sec:RefinementCriteria}

To decide which elements should be refined during the grid refinement in \cref{alg:FullFEAlgorithm} we use a combination (in our experiments, the maximum) 
of two heuristic criteria, which both seem to work reasonably well.
We define for each element $T\in\T_k$ a refinement indicator $\eta_T(v^h_k,\phi^h_k)$, depending on the solution $(v^h_k,\phi^h_k)$ of the discrete saddle point problem,
and we refine any element $T\in\T_k$ with
\begin{equation*}
\eta_T(v^h_k,\phi^h_k) \geq \lambda \max_{S\in\T_k} \eta_S(v^h_k,\phi^h_k)
\end{equation*}
for some fixed $\lambda\in(0,1)$.

The first choice of $\eta_T$ is based on the natural and intuitive idea to refine all those elements where the local gradient of the three-dimensional solution $v^h_k$ is high.
Indeed, $v^h_k$ approximates a continuous solution which we expect to be a characteristic function $1_u$
so that by finely resolving regions with high gradient $Dv^h_k$ we expect to better approximate $1_u$. Thus we define 
\begin{equation*}
\eta_T(v^h_k,\phi^h_k) = \frac{1}{\leb^3(T)}|Dv^h_k|(T').
\end{equation*}
Although this strategy is computationally cheap and easy to handle, gradient refinement only takes the current grid structure into account and neglects any information about the functional
(possibly leading to redundantly refined elements).

The second choice of $\eta_T$ is (an approximation of) the local primal-dual gap, that is, the contribution of each element to the global primal-dual gap
\begin{equation*}
\Delta(v^h_k,\phi^h_k)
=\G(v^h_k)-\D(\phi^h_k)
\geq0
\end{equation*}
associated with the strong duality from \cref{thm:strongDuality}.
Since $\Delta(v^h_k,\phi^h_k)=0$ implies that $(v^h_k,\phi^h_k)$ are the global solution of the saddle point problem,
it is natural to refine the grid in those regions where the largest contribution to the duality gap occurs.
This contribution can be calculated as follows,
\begin{align*}
\Delta(v^h_k,\phi^h_k)
&=\sup_{\phi\in\hat\K}\int_{\overline\Omega\times[0,M)}\phi\cdot\d Dv^h_k-\min_{v\in\C}\int_{\overline\Omega\times[0,M)}\phi^h_k\cdot\d Dv\\
&=\sup_{\phi\in\hat\K}\int_{\overline\Omega\times[0,M)}(\phi-\phi^h_k)\cdot Dv^h_k\,\d x\,\d s-\min_{v\in\C}\int_{\overline\Omega\times[0,M)}\phi^h_k\cdot\d D(v-v^h_k)\\
&=\sup_{\phi\in\hat\K}\int_{\overline\Omega\times[0,M)}(\phi^x-(\phi^h_k)^x)\cdot D_xv^h_k\,\d x\,\d s+\max_{v\in\C}\int_{\overline\Omega\times[0,M)}(v-v^h_k)\dive\phi^h_k\,\d x\,\d s.
\end{align*}
While the maxizing $v^{\mathrm{opt}}$ can readily be calculated as $v^{\mathrm{opt}}(x,s)=\max\{0,\mathrm{sign}(\dive\phi^h_k)\}$,
the supremum has no analytical expression and needs to be evaluated numerically.
We approximate it by refining $\T_k$ uniformly to some grid $\tilde\T_k$ and then calculating
\begin{equation*}
\phi^{\mathrm{opt}}=\argmax_{\phi\in\hat\K^h(\tilde\T_k)}\int_{\overline\Omega\times[0,M)}\phi^x\cdot D_xv_k^h\,\d x\,\d s
\in S^{0,1}(\T_k)\times S^{0,1}(\T_k)\times\{0\}.
\end{equation*}
Note that this latter maximization can be independently performed for the function values at nodes with different $x$-coordinates and thus is very fast.
We then set the refinement indicator as
\begin{equation*}
\eta_T(v^h_k,\phi^h_k)
=\int_T((\phi^{\mathrm{opt}})^x-(\phi^h_k)^x)\cdot D_xv^h_k\,\d x\,\d s+\int_T(v^{\mathrm{opt}}-v^h_k)\dive\phi^h_k\,\d x\,\d s.
\end{equation*}
Since $\phi^{\mathrm{opt}}$ is only an approximation of the true minimizer, $\sum_{T\in\T_k}\eta_T(v^h_k,\phi^h_k)\leq\Delta(v^h_k,\phi^h_k)$ is an approximation of the duality gap from below.
Note that the summand $\int_T(v^{\mathrm{opt}}-v^h_k)\dive\phi^h_k\,\d x\,\d s$ is nonnegative,
while $\int_T((\phi^{\mathrm{opt}})^x-(\phi^h_k)^x)\cdot D_xv^h_k\,\d x\,\d s$ in principle may have either sign.
However, at least we have $\sum_{x^i\in\pi_x(T)}\int_T((\phi^{\mathrm{opt}})^x-(\phi^h_k)^x)\cdot D_xv^h_k\,\d x\,\d s\geq0$ for all simplex grid nodes $x^i$
(where $\pi_x:\R^3\to\R^2$ shall be the projection onto the first two coordinates)
so that $\eta_T(v^h_k,\phi^h_k)$ may well serve as a local refinement indicator.

\subsection{Results}

We implemented the algorithm described above in C++, where the grid and corresponding finite element classes are based on the QuocMesh library \cite{Quocmesh}. For our experiments we pick the branched transport and urban planning transportation costs $\tau$ from \cref{ex:BTandUP}. 

To begin with, we test the reliability of the method by comparing its results with the true solution in a simple symmetric setting
in which the optimal transport network can actually be calculated by hand.
This setting has four evenly spaced point sources of equal mass at the top side of the rectangular domain $\Omega=[0,1]^2$
and four evenly spaced point sinks of same mass exactly opposite.
Due to the high symmetry there are only a handful of possible graph topologies whose vertex positions can explicitly be optimized.
For both branched transport and urban planning we test a range of parameters in order to explore multiple different topologies.
\Cref{fig:EnergyPlotBTAdaptive,fig:EnergyPlotUPAdaptive} show that in each case the algorithm converged to the correct solution
except for one parameter setting close to a bifurcation point where the optimal network topology changes.
In that setting our algorithm returned a convex combination of functions $1_u$ corresponding to two different topologies,
which numerically both seem to be of sufficiently equal optimality so that the algorithm converges to their convex combination (compare \cref{thm:propertiesConvex}\eqref{item:nonbinary}).
This is in fact a slight improvement over the result in \cite{BrRoWi2018}, where we performed exactly the same experiment,
only using a standard finite difference discretization at much lower resolution.
For that discretization and resolution the algorithm actually converged to the wrong topology,
which was better aligned with the grid and therefore advantageous at the given resolution.
With our new discretization we achieve a higher resolution, enabling the algorithm to move away from that erroneous topology.
It seems that more grid refinement would be necessary to recover the true solution, however,
to make the results for all parameters comparable we chose the same number of refinements throughout \cref{fig:EnergyPlotBTAdaptive,fig:EnergyPlotUPAdaptive}.

Note that the reliability of the algorithm is not obvious a priori since an adaptive refinement may in principle lead to discretization artefacts,
giving preference to material fluxes through highly resolved areas over fluxes through coarsly discretized areas, in which the discretization error produces artificial additional costs.

\begin{figure}
    \setlength\unitlength\textwidth
    \begin{picture}(1,.7)
	\put(0,0){\includegraphics[width=1\unitlength]{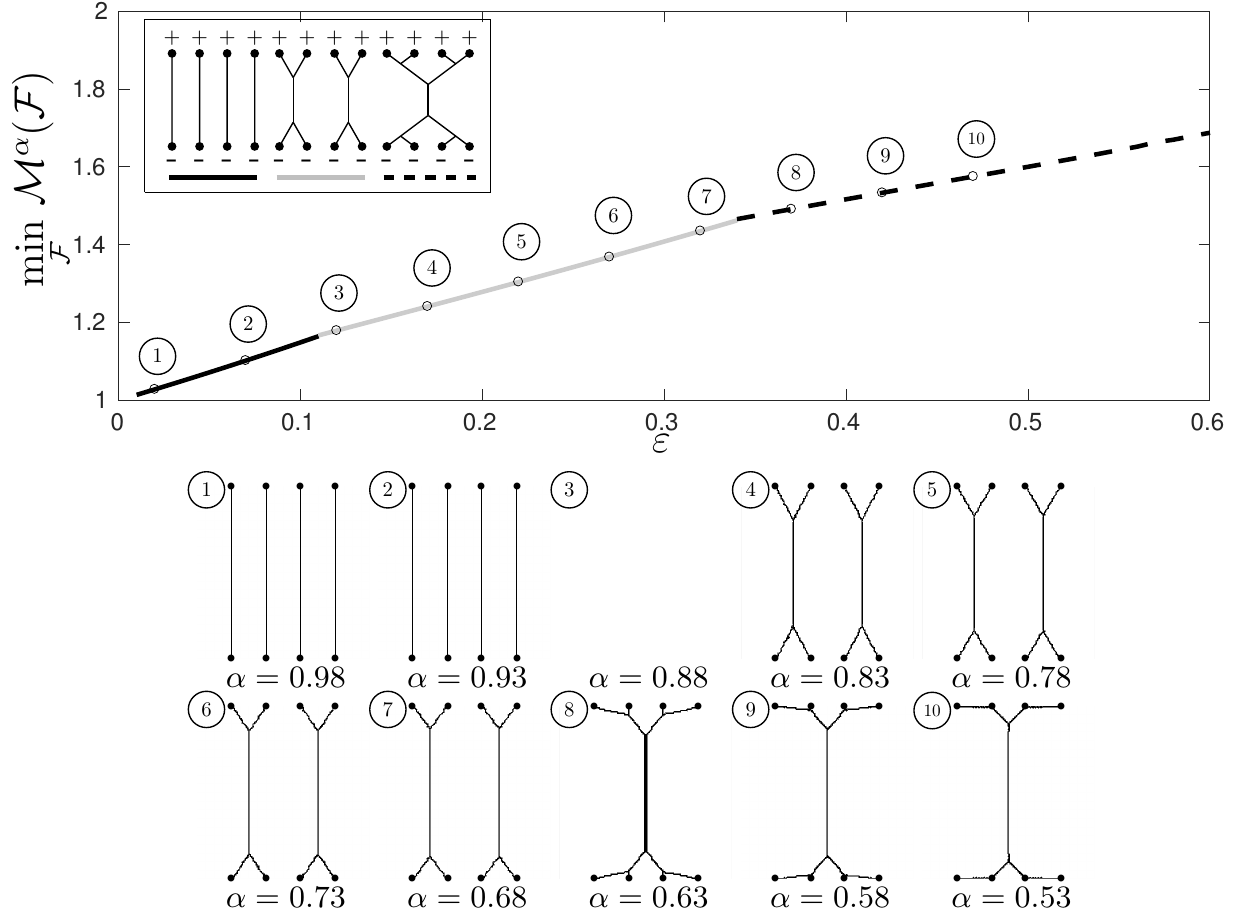}}
	\put(.51,.35){\color{white}\rule{3ex}{3.65ex}}
	\put(0,.5){\color{white}\rule{5.7ex}{18ex}}
	\put(.02,.52){\rotatebox{90}{$\displaystyle\min_{\flux\in\A_\flux}\E(\flux)$}}
	\put(.5,.37){$1-\alpha$}
	\end{picture}
	\caption{Parameter study for branched transport with transportation cost $\tau^{\mathrm{bt}}(m)=m^\alpha$. Top: Plot of the manually and the numerically computed minimal energy for different values of $1-\alpha$. The line type indicates the optimal network topology.	Bottom: Numerically computed optimal transport networks for evenly spaced values of $\alpha$ in the same range (if the numerical solution is $v$, we show the support of the gradient of its projection onto the $x$-plane). The numerically obtained network topologies match the predicted ones except for example \textcircled{\small 3}, where the three-dimensional solution is not binary, but a convex combination of the binary solutions to two different topologies\notinclude{ consisting of four straight lines and three trees (two straight lines left and right and a single tree connecting the upper and lower middle points)}.}
	\label{fig:EnergyPlotBTAdaptive}
\end{figure}

\begin{figure}
    \setlength\unitlength\textwidth
    \begin{picture}(1,.7)
	\put(0,0){\includegraphics[width=1\unitlength]{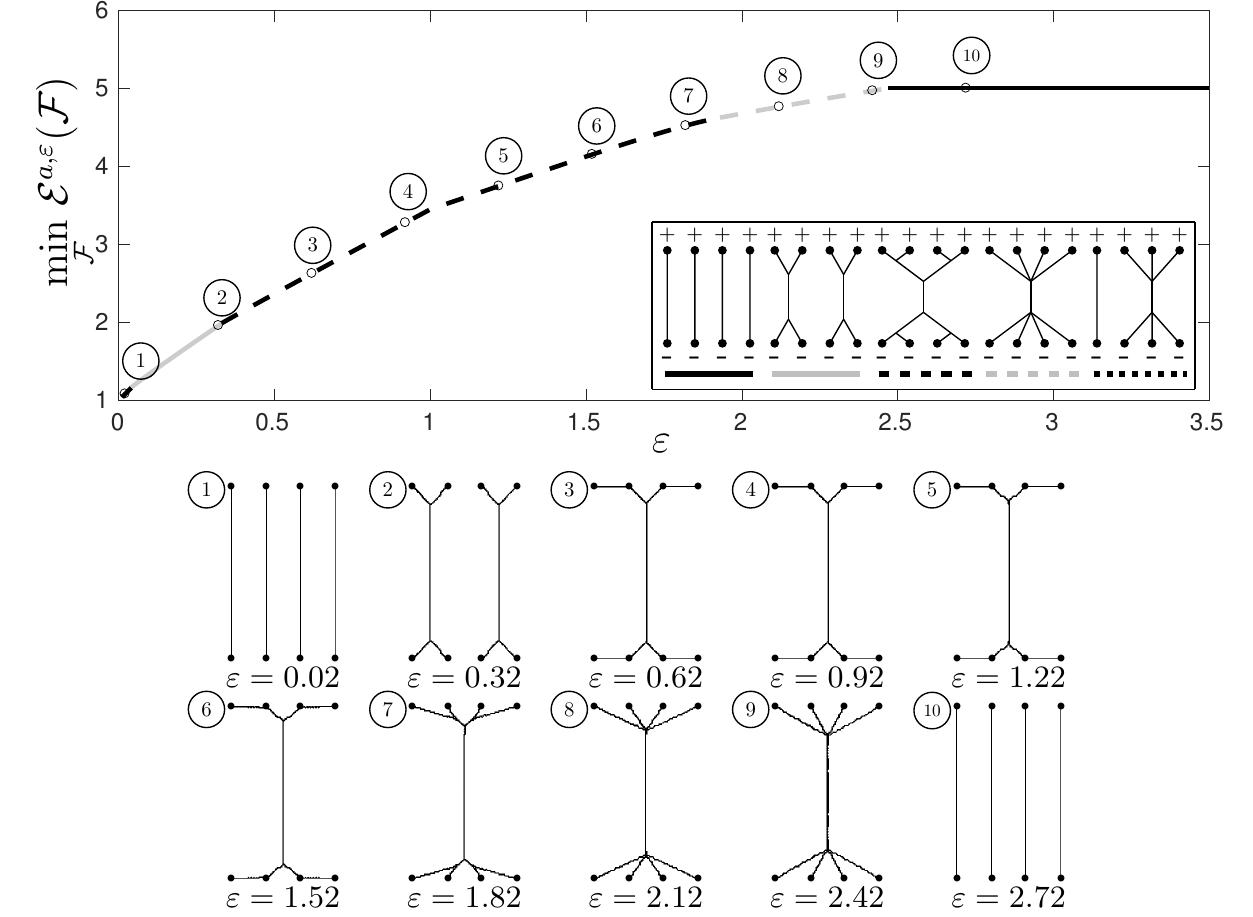}}
	\put(.51,.35){\color{white}\rule{3ex}{3.65ex}}
	\put(0,.5){\color{white}\rule{8ex}{18ex}}
	\put(.03,.52){\rotatebox{90}{$\displaystyle\min_{\flux\in\A_\flux}\E(\flux)$}}
	\put(.53,.37){$b$}
	\multiput(.18,.0060)(0,.175){2}{\multiput(0,0)(.145,0){5}{\color{white}\rule{1.5ex}{1.5ex}\color{black}\hspace{-1.5ex}\raisebox{.2ex}{$b$}}}
	\end{picture}
	\caption{Parameter study for urban planning with transportation cost $\tau^{\mathrm{up}}(m)=\min\{am,m+b\}$ for $a=5$ and varying $b$. Illustration as in \cref{fig:EnergyPlotBTAdaptive}.}
	\label{fig:EnergyPlotUPAdaptive}
\end{figure}

At this point we would also like to mention that in \cite{BrRoWi2018} we obtained one simulation result for urban planning
with the same four sources and sinks as in \cref{fig:EnergyPlotUPAdaptive} (but different parameter values)
which was not binary and which we assumed to be a manifestation of the convex relaxation being not tight.
However, it turns out that the result was again just a convex combination of two global minimizers,
namely the right-most topology in \cref{fig:EnergyPlotUPAdaptive} and its mirror image
(which just happen to be never optimal for the parameters in \cref{fig:EnergyPlotUPAdaptive}).

Next we repeat the other numerical simulations from \cite{BrRoWi2018} which require transport networks of much more complex branching structure
and which due to a lack of resolution could hardly be resolved in \cite{BrRoWi2018}
(in fact, the smallest obtained network branches were at the order of the discretization width, and all network branches were visibly distorted by the pixel grid).
\Crefrange{fig:Results16To16FE}{fig:Results32CircularFE} show simulation results for these configurations with much more satisfying accuracy
at which all branches are clearly resolved.

\begin{figure}
    \setlength\unitlength\textwidth
    \begin{picture}(1.05,.48)
	\put(0,0){\includegraphics[width=1.05\unitlength,trim=0 20 0 20,clip]{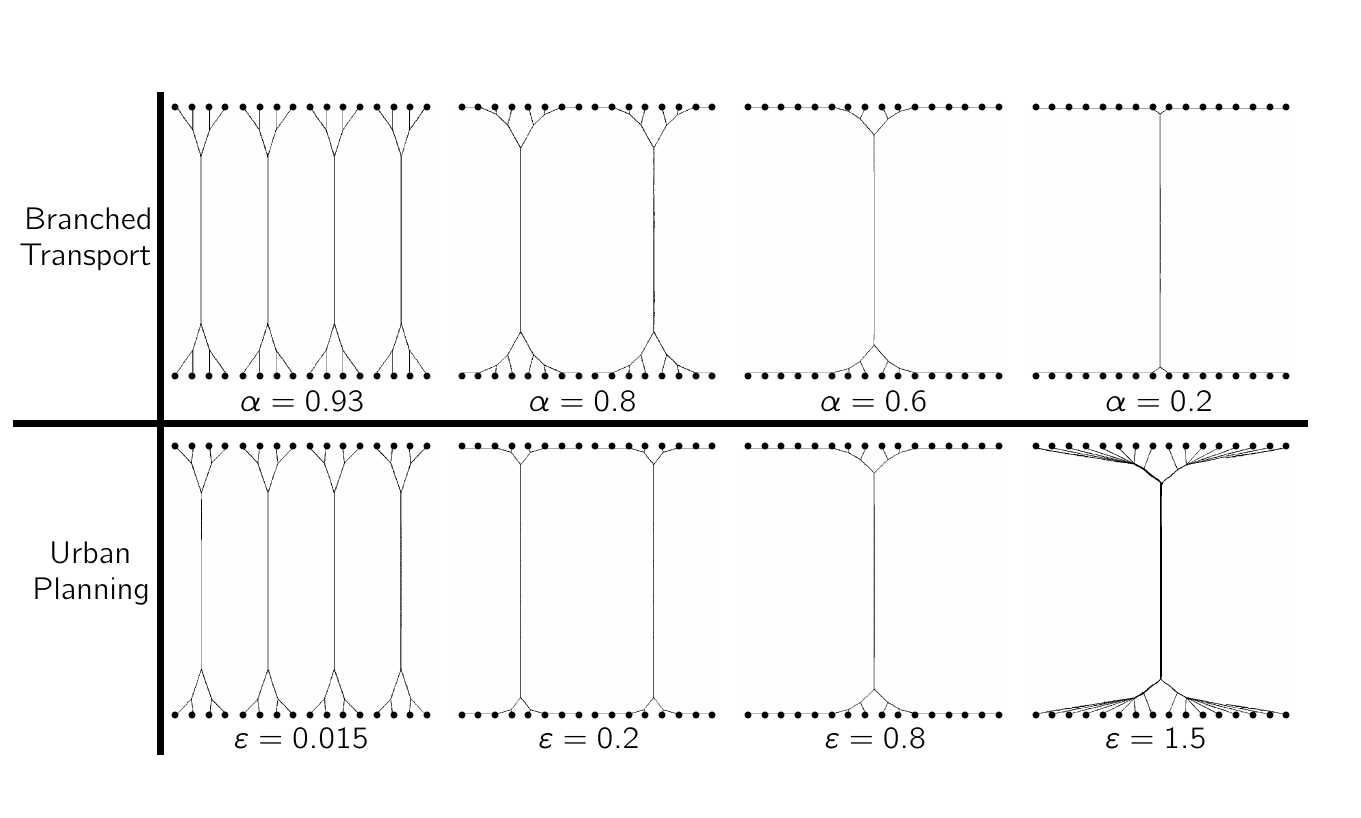}}
	\multiput(.18,-.003)(.236,0){2}{\color{white}\rule{1.5ex}{1.5ex}\color{black}\hspace{-1.5ex}\raisebox{.2ex}{$b$}}
	\multiput(.632,-.003)(.217,0){2}{\color{white}\rule{1.5ex}{1.5ex}\color{black}\hspace{-1.5ex}\raisebox{.2ex}{$b$}}
	\end{picture}
	\caption{Numerical optimization results for transport from 16 almost evenly spaced point sources to 16 point sinks of the same mass ($a=5$ for the urban planning results).}
	\label{fig:Results16To16FE}
\end{figure}

\begin{figure}
    \setlength\unitlength\textwidth
    \begin{picture}(1.05,1.05)
	\put(0,0){\includegraphics[width=1.05\textwidth,trim=0 11 0 20,clip]{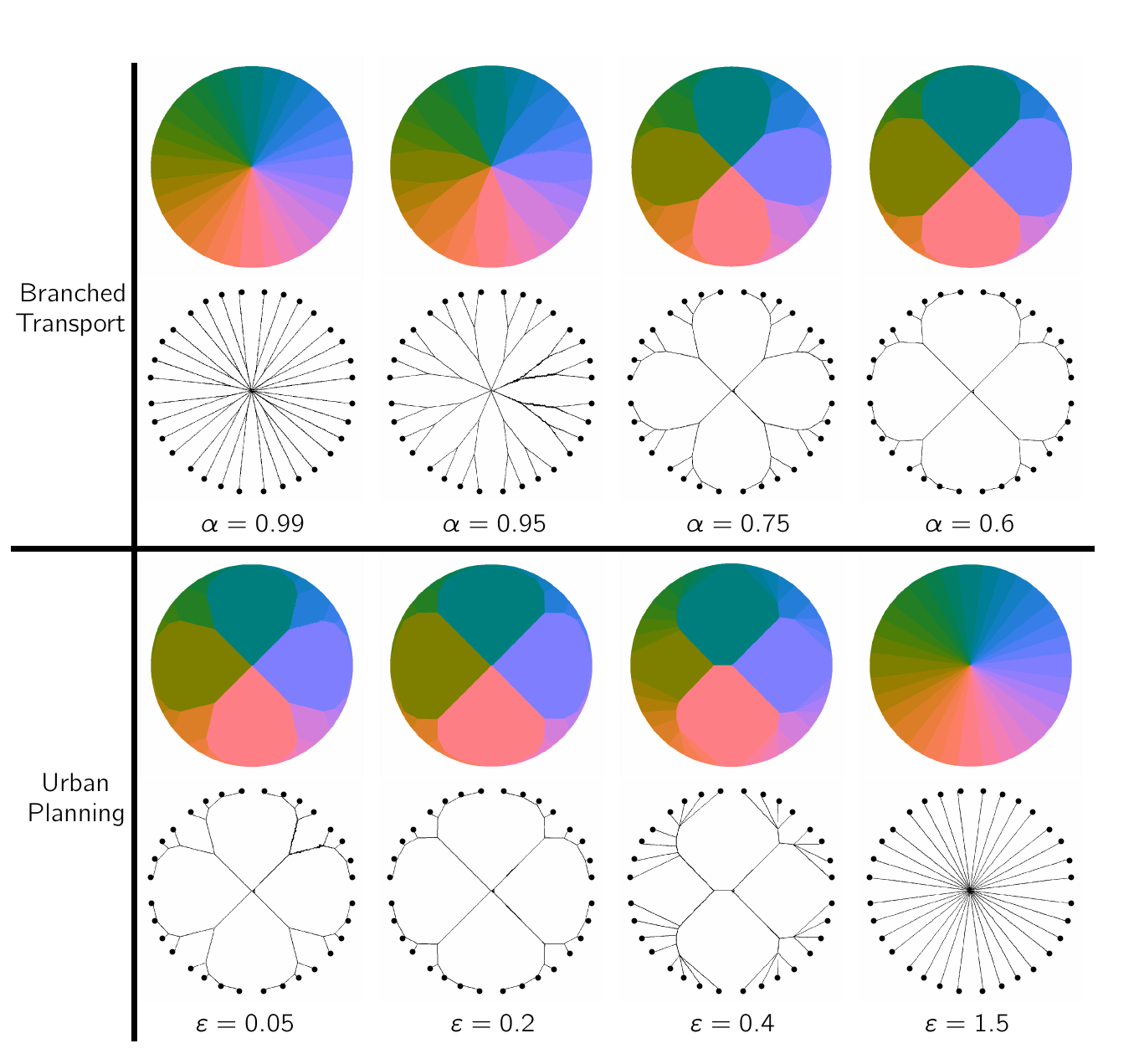}}
	\multiput(.18,-.003)(.236,0){2}{\color{white}\rule{1.5ex}{1.5ex}\color{black}\hspace{-1.5ex}\raisebox{.2ex}{$b$}}
	\multiput(.632,-.003)(.217,0){2}{\color{white}\rule{1.5ex}{1.5ex}\color{black}\hspace{-1.5ex}\raisebox{.2ex}{$b$}}
	\end{picture}
	\caption{Numerical optimization results for transport from a central source point to 32 almost evenly spaced point sinks of equal mass on a concentric circle ($a=5$ for the urban planning results). 
	Using a periodic colour-coding, we show the images $u$, whose lifting $1_u$ is the numerical solution, as well as the support of their gradient underneath, which represents the transport network.}
	\label{fig:Results32CircularFE}
\end{figure}

In these rather symmetric example settings we slightly broke the symmetry by perturbing the even spacing of sources and sinks,
since otherwise there would be multiple global optimal transport networks, a convex combination of which would be returned by our algorithm.
To be able to have a source point within the domain $\Omega$ in \cref{fig:Results32CircularFE} (recall that $\mu_+,\mu_-$ should lie on $\partial\Omega$) we employ the following trick:
we connect the centre source with the boundary $\partial\Omega$ by a (straight) line across which we enforce the variables $v$ and $\phi$ to be discontinuous with
\begin{equation*}
v^-(x,s)=v^+(x,s+M),\quad
\phi^-(x,s)=\phi^+(x,s+M)
\end{equation*}
for $M=\|\mu_+\|_\M=\|\mu_-\|_\M$.
Essentially this means that we take the range of the two-dimensional images $u$ (corresponding to the mass fluxes) to be an infinite covering of $[0,M)$ with fibres $r+M\mathbb Z$.

We finally discuss the gain in computational efficiency by the new adaptive discretization.
We already saw before that the adaptive discretization allows to produce a quality of the transport networks that goes far beyond a standard discretization.
At the same time, the computational cost decreases.
\Cref{fig:adaptiveGrid} illustrates, for a simple example that can readily be visualized, the reason for the enhanced efficiency, the underlying adaptive grid refinement near the network branches.
\Cref{tab:speedup,fig:speedup} quantify the speedup of going from a standard uniform discretization to the adaptive one
(for the same configuration as in \cref{fig:EnergyPlotBTAdaptive} with $\alpha=0.5$), which quickly reaches orders of magnitude.

\begin{figure}
	\setlength{\unitlength}{\linewidth}
	\begin{picture}(1.2,.35)
	\put(0,0){\includegraphics[width=0.3\textwidth]{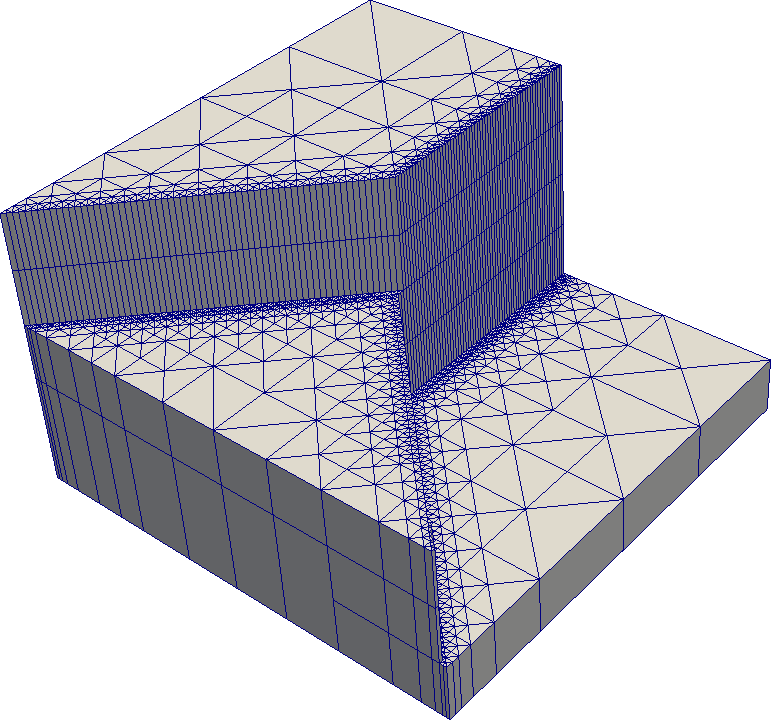}}
	\put(0.37,0.02){\includegraphics[width=0.25\textwidth]{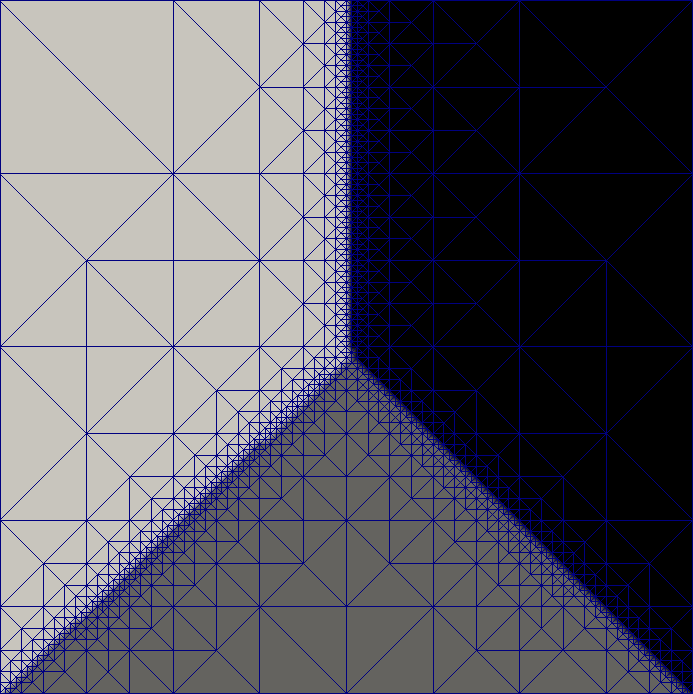}}
	\put(0.7,0.03){\includegraphics[width=0.22\textwidth]{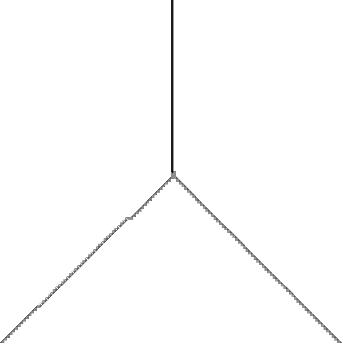}}
	\put(0.812,0.246){\circle*{0.012}}
	\put(0.703,0.03){\circle*{0.012}}
	\put(0.92,0.03){\circle*{0.012}}
	\put(0.802,0.26){$+$}
	\put(0.69,0.005){$-$}
	\put(0.91,0.005){$-$}
	\end{picture}
\caption{Optimal network for branched transport from one point mass at the top of $\Omega=[0,1]^2$ to two equal sinks at the bottom corners of $\Omega$.
Left: Profile of the three-dimensional discrete solution $v^h$ (the displayed surface shows the $\frac12$-level set of $v$ with finite element boundaries indicated in blue).
Middle: Two-dimensional image obtained by projecting $v^h$ onto the $x$-plane (the element boundaries of the underlying two-dimensional simplex grid are shown in blue).
Right: Optimal network structure, given by the support of the image gradient.}
\label{fig:adaptiveGrid}
\end{figure}

\begin{table}
	\centering
	\scalebox{0.76}{
		\begin{tabular}{|c|cccc|cccccc|}
			\hline 
			& \multicolumn{4}{|c|}{Uniform} & \multicolumn{6}{|c|}{Adaptive} \\
			\hline
			x/s & numEls & numDofs & time & pd gap & numEls & numDofs & \%Els & \%Dofs & time & pd gap \\
			\hline 
			4/2 & 2048 & 1445 & 14 sec. & 0.0069 & 2048 & 1445 & 100 & 100 & 14 sec. & 0.0069 \\
			5/3 & 16384 & 9801 & 96 sec. & 0.0192 & 7111 & 4576 & 43.4 & 46.7 & 44 sec. & 0.0101 \\
			6/4 & 131072 & 71825 & 855 sec. & 0.0165 & 30961 & 18800 & 23.6 & 26.2 & 184 sec. & 0.0431 \\
			7/5 & 1048576 & 549153 & 20014 sec. & 0.0013 & 91391 & 53596 & 8.7 & 9.8 & 632 sec. & 0.0027 \\
			8/6 & 8388608 & 4293185 & 224221 sec. & 0.0047 & 146825 & 84749 & 1.7 & 2.0 & 1405 sec. & 0.0019 \\
			9/7 & - & - & - & - & 295227 & 167030 & 0.4 & 0.5 & 3438 sec. & 0.0008 \\
			10/8 & - & - & - & - & 667289 & 370570 & 0.1 & 0.1 & 9767 sec. & 0.0003 \\
			\hline 
		\end{tabular}
	}
\caption{Comparison between branched transport network simulations on a uniform and an adaptive grid.
The first column refers to the $x$- and $s$-level of the uniform grid and the highest local $x$- and $s$-level of the adaptive grid
(the $x$- and $s$-level of an element is the number of $x$- and $s$-bisections necessary to obtain the element, starting from an element of the same size as the computational domain).
The table shows the number of elements, of degrees of freedom in the variable $v^h$, the runtime, and the calculated primal-dual gap at the end.
For the adaptive simulation, the relative numer of elements and degrees of freedom compared to the uniform simulation is also shown as a percentage.
All adaptive simulations start at a uniform grid of $x$-level $4$ and $s$-level $2$.
The experiments on a uniform grid of the highest levels are omitted due to their infeasible runtime and memory consumption.}
\label{tab:speedup}
\end{table}

\begin{figure}
	\begin{minipage}[b]{0.45\linewidth}
		\centering
		\includegraphics[width=1.1\textwidth]{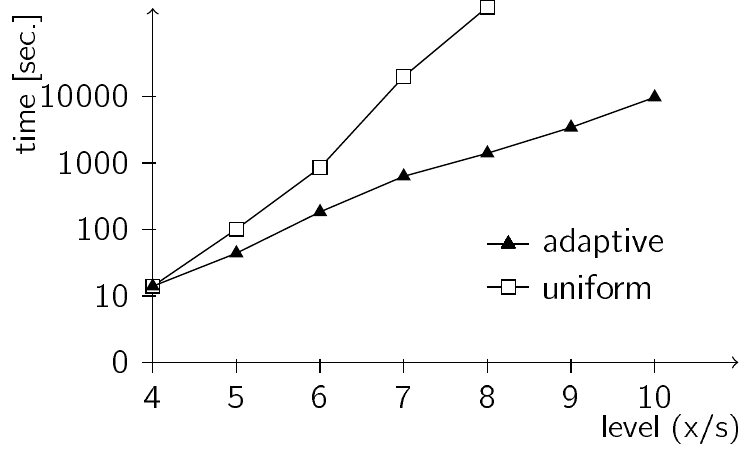}
	\end{minipage} 
	\hspace{1cm}
	\begin{minipage}[b]{0.45\linewidth}
		\centering
		\includegraphics[width=1\textwidth]{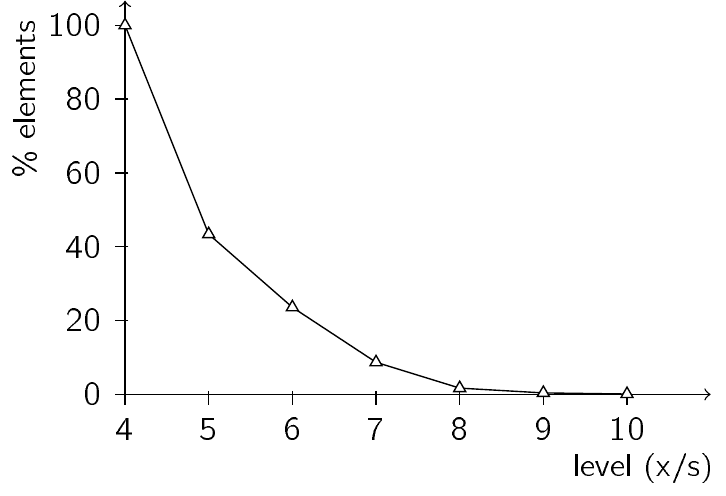}
	\end{minipage} 
\caption{Runtime and relative number of elements in a simulation on an adaptive versus a uniform grid from \cref{tab:speedup}.}
\label{fig:speedup}
\end{figure}


\section{Discussion} 

We shed more light on the relation between two-dimensional generalized branched transport and corresponding convex optimization problems obtained via functional lifting.
In particular, it is now clear that those problems are indeed equivalent up to a relaxation step whose tightness is expected, but not known.
With a tailored adaptive finite element discretization, this relation could now be leveraged to solve two-dimensional generalized branched transport problems.

A seeming disadvantage of the functional lifting approach lies in the fact that the given material source and sink $\mu_+,\mu_-$ need to be supported on the computational domain boundary.
This deficiency can be overcome by a trick similar to the one of \cref{fig:Results32CircularFE}, introduced in \cite{BoOrOu2016}.
To this end one fixes an initial backward mass flux $\flux_-$ from $\mu_-$ to $\mu_+$.
Taking now any mass flux $\flux$ from $\mu_+$ to $\mu_-$, the joint flux $\flux+\flux_-$ has zero divergence and can thus be translated into the gradient of an image.
During the image optimization or the corresponding lifted convex optimization one just has to ensure by constraints that the backward mass flux stays fixed and is not changed
(and also one has to adapt the cost functional so as to neglect the cost of $\flux_-$ and to prevent artificial cost savings that may come about by aggregating part of $\flux$ with $\flux_-$).

%

A true disadvantage, though, of the approach is that it is inherently limited to two space dimensions.
Indeed, it exploits that in two space dimensions the one-dimensional network structures also have codimension $1$ and thus can be interpreted as image gradients.
However, the two-dimensional case is of importance in various settings such as logistic problems, public transport networks, river networks or leaf venation, to name but a few examples.

Compared to graph-based methods, computation times of our approach are of course much longer, however, our approach is guaranteed to yield a global minimizer.
Nevertheless, heuristic topology optimization procedures on graphs seem to result in networks of almost the same quality.
It is conceivable that a combination of both approaches may increase efficiency while maintaining the guarantee of a global minimum.


\section{Acknowledgement}
The work was supported by the Alfried Krupp Prize for Young University Teachers awarded by the Alfried Krupp von Bohlen und Halbach-Stiftung as well as by the Deutsche Forschungsgemeinschaft (DFG, German Research Foundation) under Germany's Excellence Strategy through the Cluster of Excellence ``Mathematics M\"unster: Dynamics -- Geometry -- Structure'' (EXC 2044 - 390685587) at the University of M\"unster and through the DFG-grant WI 4654/1-1 within the Priority Program 1962.

\bibliographystyle{plain}
\bibliography{AdaptiveBranchedTransport}

\begin{thebibliography}{10}

\bibitem{AlBoDa03}
Giovanni Alberti, Guy Bouchitt\'{e}, and Gianni Dal~Maso.
\newblock The calibration method for the {M}umford-{S}hah functional and
  free-discontinuity problems.
\newblock {\em Calc. Var. Partial Differential Equations}, 16(3):299--333,
  2003.

\bibitem{AmFuPa2000}
L.~Ambrosio, N.~Fusco, and D.~Pallara.
\newblock {\em Functions of Bounded Variation and Free Discontinuity Problems}.
\newblock Oxford Science Publications. Clarendon Press, 2000.

\bibitem{BoOrOu2016}
Mauro Bonafini, Giandomenico Orlandi, and \'Edouard Oudet.
\newblock Variational approximation of functionals defined on $1$-dimensional
  connected sets: The planar case.
\newblock {\em SIAM Journal on Mathematical Analysis}, 50(6):6307--6332, 2018.

\bibitem{BoZh2005}
Jonathan Borwein and Qiji Zhu.
\newblock {\em Techniques of Variational Analysis}.
\newblock Springer New York, 2005.

\bibitem{BoDy1986}
J.~P. Boyle and R.~L. Dykstra.
\newblock A method for finding projections onto the intersection of convex sets
  in {H}ilbert spaces.
\newblock {\em Advances in Order Restricted Statistical Inference,
  \emph{37:28--47}}, 1986.

\bibitem{BrBu2005}
Alessio Brancolini and Giuseppe Buttazzo.
\newblock Optimal networks for mass transportation problems.
\newblock {\em ESAIM: Control, Optimisation and Calculus of Variations},
  11(1):88--101, 2005.

\bibitem{BrRoWi2018}
Alessio Brancolini, Carolin Rossmanith, and Benedikt Wirth.
\newblock Optimal micropatterns in 2d transport networks and their relation to
  image inpainting.
\newblock {\em Archive for Rational Mechanics and Analysis}, 228(1):279--308,
  Apr 2018.

\bibitem{BrWi16}
Alessio Brancolini and Benedikt Wirth.
\newblock Equivalent formulations for the branched transport and urban planning
  problems.
\newblock {\em J. Math. Pures Appl.}, 106(4):695--724, 2016.

\bibitem{BrWi2018}
Alessio Brancolini and Benedikt Wirth.
\newblock General transport problems with branched minimizers as functionals of
  1-currents with prescribed boundary.
\newblock {\em Calc. Var. Partial Differential Equations}, 57(3):Art. 82, 39,
  2018.

\bibitem{BrSu18}
Alberto Bressan and Qing Sun.
\newblock On the optimal shape of tree roots and branches.
\newblock {\em Math. Models Methods Appl. Sci.}, 28(14):2763--2801, 2018.

\bibitem{BuPrSoSt09}
Giuseppe Buttazzo, Aldo Pratelli, Sergio Solimini, and Eugene Stepanov.
\newblock {\em Optimal urban networks via mass transportation}, volume 1961 of
  {\em Lecture Notes in Mathematics}.
\newblock Springer-Verlag, Berlin, 2009.

\bibitem{ChMeFe2016}
A.~Chambolle, B.~Merlet, and L.~Ferrari.
\newblock A simple phase-field approximation of the {Steiner} problem in
  dimension two.
\newblock {\em Advances in Calculus of Variations, \emph{12(2)}}, 2016.

\bibitem{ChPo2011}
A.~Chambolle and T.~Pock.
\newblock A first-order primal-dual algorithm for convex problems with
  applications to imaging.
\newblock {\em Journal of Mathematical Imaging and Vision,
  \emph{40(1):120--145}}, 2011.

\bibitem{ChMeFe2019}
Antonin Chambolle, Benoit Merlet, and Luca Ferrari.
\newblock Strong approximation in $h$-mass of rectifiable currents under
  homological constraint.
\newblock {\em Advances in Calculus of Variations}, 2019.

\bibitem{FaLeMa16}
Marcia Fampa, Jon Lee, and Nelson Maculan.
\newblock An overview of exact algorithms for the {E}uclidean {S}teiner tree
  problem in {$n$}-space.
\newblock {\em Int. Trans. Oper. Res.}, 23(5):861--874, 2016.

\bibitem{FeRoWi2019}
Luca Ferrari, Carolin Rossmanith, and Benedikt Wirth.
\newblock Phase field approximations of branched transportation problems.
\newblock Preprint, 2019.

\bibitem{Fe18}
Luca Alberto~Davide Ferrari.
\newblock {\em {Phase-field approximation for some branched transportation
  problems}}.
\newblock Theses, {Universit{\'e} Paris-Saclay}, October 2018.

\bibitem{FoBrWiZa2014}
R.~Fonseca, M.~Brazil, P.~Winter, and M.~Zachariasen.
\newblock Faster exact algorithm for computing {S}teiner trees in higher
  dimensional {E}uclidean spaces.
\newblock {\em Presented at the 11th DIMACS Implementation Challenge Workshop,
  Providence, RI}, 2014.
\newblock
  http://dimacs11.cs.princeton.edu/workshop/FonsecaBrazilWinterZachariasen.pdf.

\bibitem{Gi67}
E.~N. Gilbert.
\newblock Minimum cost communication networks.
\newblock {\em The Bell System Technical Journal}, 46(9):2209--2227, Nov 1967.

\bibitem{GiPo1968}
E.~N. Gilbert and H.~O. Pollak.
\newblock Steiner minimal trees.
\newblock {\em SIAM J. Appl. Math.}, 16:1--29, 1968.

\bibitem{JuWaWiZa2018}
D.~Juhl, D.~M. Warme, P.~Winter, and M.~Zachariasen.
\newblock The {G}eo{S}teiner software package for computing {S}teiner trees in
  the plane: an updated computational study.
\newblock {\em Mathematical Programming Computation, \emph{10(4):487--532}},
  2018.

\bibitem{MaMiXa2000}
N.~Maculan, P.~Michelon, and A.~Xavier.
\newblock The {E}uclidean {S}teiner tree problem in $\mathbb{R}^n$: A
  mathematical programming formulation.
\newblock {\em Annals of Operations Research, \emph{96(1):209--220}}, 2000.

\bibitem{MaSoMo2003}
Francesco Maddalena, Sergio Solimini, and Jean-Michel Morel.
\newblock A variational model of irrigation patterns.
\newblock {\em Interfaces and Free Boundaries}, 5:391--415, 12 2003.

\bibitem{MaWi19}
Andrea Marchese and Benedikt Wirth.
\newblock Approximation of rectifiable 1-currents and weak-{$\ast$} relaxation
  of the {$h$}-mass.
\newblock {\em J. Math. Anal. Appl.}, 479(2):2268--2283, 2019.

\bibitem{MaMiSc2012}
M.~Matuszak, J.~Miekisz, and T.~Schreiber.
\newblock Solving ramified optimal transport problems in the {B}ayesian
  influence diagram framework.
\newblock In {\em International conference on artificial intelligence and soft
  computing, Lecture Notes in Computer Science: \emph{582--590}}, 2012.

\bibitem{Me1961}
Z.~A. Melzak.
\newblock On the problem of {S}teiner.
\newblock {\em Canadian Mathematical Bulletin, \emph{4(2):143--148}}, 1961.

\bibitem{Mo2017}
A.~Monteil.
\newblock Uniform estimates for a {Modica--Mortola} type approximation of
  branched transportation.
\newblock {\em ESAIM Control Optimisation and Calculus of Variations,
  \emph{23(1):309--335}}, 2017.

\bibitem{MuSh1989}
David Mumford and Jayant Shah.
\newblock Optimal approximation by piecewise smooth functions and associated
  variational problems.
\newblock {\em Communications on Pure and Applied Mathematics}, 17:577--685,
  1989.

\bibitem{OuSa2011}
E.~Oudet and F.~Santambrogio.
\newblock A {Modica--Mortola} approximation for branched transport and
  applications.
\newblock {\em Archive for Rational Mechanics and Analysis,
  \emph{201(1):115--142}}, 2011.

\bibitem{Pi2014}
J.~Piersa.
\newblock Ramification algorithm for transporting routes in $\mathbb{R}^2$.
\newblock {\em 2014 {IEEE} 26th International Conference on Tools with
  Artificial Intelligence, Limassol: \emph{657--664}}, 2014.

\bibitem{PoCrBiCh2009}
T.~{Pock}, D.~{Cremers}, H.~{Bischof}, and A.~{Chambolle}.
\newblock An algorithm for minimizing the {Mumford--Shah} functional.
\newblock In {\em 2009 IEEE 12th International Conference on Computer Vision},
  pages 1133--1140, Sep. 2009.

\bibitem{PoCrBiCh2010}
Thomas Pock, Daniel Cremers, Horst Bischof, and Antonin Chambolle.
\newblock Global solutions of variational models with convex regularization.
\newblock {\em SIAM J. Img. Sci.}, 3(4):1122--1145, December 2010.

\bibitem{Quocmesh}
QuocMesh.
\newblock {\em Using and Programming the {QuocMesh} Library. {QuocMesh}
  Collective. Version 1.5}, 2014.
\newblock
  https://archive.ins.uni-bonn.de/numod.ins.uni-bonn.de/software/quocmesh/1.5/doc/lib/index.html.

\bibitem{Sm93}
Stanislav~K. Smirnov.
\newblock Decomposition of solenoidal vector charges into elementary solenoids,
  and the structure of normal one-dimensional flows.
\newblock {\em Algebra i Analiz}, 5(4):206--238, 1993.

\bibitem{Sm1992}
W.~D. Smith.
\newblock How to find {S}teiner minimal trees in {E}uclidean d-space.
\newblock {\em Algorithmica, \emph{7(1-6):137--177}}, 1992.

\bibitem{Tr1997}
C.~T. Traxler.
\newblock An algorithm for adaptive mesh refinement in $n$ dimensions.
\newblock {\em Computing}, 59:115--137, 1997.

\bibitem{Wi19}
Benedikt Wirth.
\newblock Phase field models for two-dimensional branched transportation
  problems.
\newblock {\em Calc. Var. Partial Differential Equations}, 58(5):Art. 164, 31,
  2019.

\bibitem{Xia2008}
Q.~Xia.
\newblock Numerical simulation of optimal transport paths.
\newblock In {\em 2010 Second International Conference on Computer Modeling and
  Simulation}, 2008.

\bibitem{Xia2015}
Q.~Xia.
\newblock Motivations, ideas and applications of ramified optimal
  transportation.
\newblock {\em ESAIM: Mathematical Modelling and Numerical Analysis,
  \emph{49(6):1791--1832}}, 2015.

\bibitem{Xia2003}
Qinglan Xia.
\newblock Optimal paths related to transport problems.
\newblock {\em Commun. Contemp. Math.}, 5(2):251--279, 2003.

\bibitem{Xi07}
Qinglan Xia.
\newblock The formation of a tree leaf.
\newblock {\em ESAIM: Control, Optimisation and Calculus of Variations},
  13(2):359--377, 2007.

\bibitem{XiCrFa16}
Qinglan Xia, Lisa~A. Croen, M.~Danielle Fallin, Craig~J. Newschaffer, Cheryl
  Walker, Philip Katzman, Richard~K. Miller, John Moye, Simon Morgan, and
  Carolyn Salafia.
\newblock Human placentas, optimal transportation and high-risk autism
  pregnancies.
\newblock {\em Journal of Coupled Systems and Multiscale Dynamics},
  4(4):260--270, 2016.

\end{thebibliography}

\end{document}